\theoremstyle{plain}
\newtheorem{theorem}{Theorem}[section]
\newtheorem{corollary}[theorem]{Corollary}
\newtheorem{definition}[theorem]{Definition}
\newtheorem{example}[theorem]{Example}
\newtheorem{lemma}[theorem]{Lemma}
\newtheorem{proposition}[theorem]{Proposition}
\newtheorem{remark}[theorem]{Remark}
\numberwithin{equation}  {section}
\begin{document}
\title[Beurling's theorem]{A non-commutative Beurling's theorem with respect to unitarily invariant norms}
\author{Yanni Chen}
\address{Department of Mathematics, University of New Hampshire, Durham, NH 03824, U.S.A.}
\email{yanni.chen@unh.edu}
\author{Don Hadwin}
\address{Department of Mathematics, University of New Hampshire, Durham, NH 03824, U.S.A.}
\email{don@unh.edu}
\author{Junhao Shen}
\address{Department of Mathematics, University of New Hampshire, Durham, NH 03824, U.S.A.}
\email{junhao.shen@unh.edu}
\thanks{This work supported in part by a grant from the Simons Foundation.}
\subjclass[2000]{Primary 46L52, 30H10; Secondary 47A15}
 \keywords{Normalized, unitarily invariant, $\Vert\cdot \Vert_{1}$-dominating,
continuous norm, maximal subdiagonal algebra, dual space, Beurling's
theorem, non-commutative Hardy space}

\begin{abstract}
 In 1967, Arveson invented a   non-commutative generalization
of classical $H^{\infty},$ known as finite  maximal subdiagonal subalgebras,
for a finite von Neumann algebra $\mathcal M$ with a faithful normal
tracial state $\tau$. In 2008, Blecher and Labuschagne proved a
version of Beurling's theorem on $H^\infty$-right invariant
subspaces in a non-commutative  $L^{p}(\mathcal M,\tau)$ space for
$1\le p\le \infty$. In the present paper, we define and study a
class of norms ${\mathcal{N}}_{c}(\mathcal M, \tau)$ on   $\mathcal{M},$ called
normalized, unitarily invariant,  $\Vert \cdot
\Vert_{1}$-dominating, continuous norms, which properly contains the
class $\{ \Vert \cdot \Vert_{p}:1\leq p< \infty \}.$ For $\alpha \in
\mathcal{N}_{c}(\mathcal M, \tau),$ we define a non-commutative  $L^{\alpha
}({\mathcal{M}},\tau)$ space and a non-commutative $H^{\alpha}$
space. Then we obtain a version of the Blecher-Labuschagne-Beurling
invariant subspace theorem on $H^\infty$-right invariant subspaces
in a non-commutative $L^{\alpha }({\mathcal{M}},\tau)$ space. Key
ingredients in the proof of our main result  include a
characterization theorem of  $H^\alpha$  and a density theorem for
$L^\alpha(\mathcal M,\tau)$.

\end{abstract}
\maketitle

\section{Introduction}

Let $\mathbb T$ be the unit circle and $\mu$ be the Haar measure on
$\mathbb T$ such that $\mu(\mathbb T)=1$. Then  $ L^\infty(\mathbb
T,\mu)$ is a commutative von Neumann algebra. For each $1\le p<
\infty$, we let $L^p(\mathbb T, \mu)$ be the completion of
$L^\infty(\mathbb T,\mu)$ with respect to $L^p$-norm. And we define
the Hardy space  $H^p$ as follows:
$$
H^p=\{f\in L^p(\mathbb T,\mu): \int_{\mathbb T} f(e^{i\theta})
e^{in\theta} d\mu(\theta)=0  \  \text{ for } \ n\in \mathbb N\},
\qquad \text{ for $1\le p \le \infty$.}
$$
It is not hard to check that,  for each $ 1\le p\le \infty $, there
exists a representation of $L^\infty(\mathbb T,\mu)$ into
$B(L^p(\mathbb T,\mu))$ given by the mapping $\psi \mapsto M_\psi$,
where $M_\psi$ is the multiplication operator defined by
$M_\psi(f)=\psi f$ for $f\in
 L^p(\mathbb T,\mu)$. Therefore we might assume that $L^\infty(\mathbb T,\mu)$,  and thus $H^\infty$, act naturally on each $L^p(\mathbb
T,\mu)$ space  by left (or right) multiplication for  $1\le p\le
\infty$. The classical, and influential, Beurling's  theorem in
\cite{B} states that {\em if
 $\mathcal W$ is a nonzero closed, $H^\infty$-invariant subspace (or, equivalently, $z\mathcal W\subseteq \mathcal W$) of $H^2$,
 then $\mathcal W=\psi H^2$ for some $\psi\in H^\infty$ with
 $|\psi|=1$ a.e.} $(\mu).$ Later, the Beurling's  theorem for $H^2$ was generalized to describe closed $H^\infty$-invariant subspaces in the Hardy space $H^p$
 with
 $1\le p\le \infty$   as follows: {\em if
 $\mathcal W$ is a nonzero closed $H^\infty$-invariant subspace of $H^p$
 with $1\le p\le \infty, $
 then $\mathcal W=\psi H^p$ for some $\psi\in H^\infty$ with
 $|\psi|=1$ a.e. $(\mu)$ } (see \cite{Bo}, \cite{Halmos}, \cite{He}, \cite{HL}, \cite{Ho}, \cite{Sr} and etc). The Beurling's  theorem was also extended in many other
 directions.

 The theory of non-commutative
$L^p$-spaces, or so called  ``non-commutative integration theory",
was initiated by  Segal (\cite{Se}) and Dixmier (\cite{Dix}) in
1950's. Since then, the theory of non-commutative $L^p$-spaces has
been extensively studied and developed (see \cite{PX} for related
references). It has now become an extremely active research area. In
the paper, we are mainly interested in non-commutative $L^p$-spaces
associated with   finite von Neumann algebras.  Let $\mathcal M$ be
a finite von Neumann algebra with a faithful normal tracial state
$\tau$. For each $1\le p<\infty$, we define a mapping $\|\cdot\|_p:
\mathcal M\rightarrow [0,\infty)$ by
$\|x\|_p=(\tau((x^*x)^{p/2}))^{1/p}$ for any $x\in \mathcal M$. It is
a highly nontrivial fact that $\|\cdot \|_p$ actually defines a
norm, an $L^p$-norm, on $\mathcal M$. Thus we let $L^p(\mathcal
M,\tau)$ be the completion of $\mathcal M$ under the norm
$\|\cdot\|_p$. Moreover, it is not hard to see that there exists an
anti-representation $\rho$ of $\mathcal M$ on the space
$L^p(\mathcal M,\tau)$ given by $\rho(a)\xi=\xi a$ for $\xi\in
L^p(\mathcal M,\tau)$ and $a\in \mathcal M$. Thus we might assume
that $\mathcal M $  acts naturally on each $L^p(\mathcal M,\tau)$
space  by   right  multiplication for $1\le p\le \infty$. We will
refer to a wonderful handbook \cite{PX} by Pisier and Xu for general
knowledge and current development of the theory of non commutative
$L^p$-spaces.

 In 1967, W. Arveson \cite{Arv}
introduced a concept of maximal subdiagonal algebras, also known as
non-commutative $H^\infty  $ spaces, to study the analyticity in
operator algebras. {   Let $\mathcal M$ be a finite von Neumann
algebra with a faithful normal tracial state $\tau$. Let  $\mathcal
A$ be a weak* closed unital subalgebra of $\mathcal M,$ and  $\Phi$
be a faithful, normal conditional  expectation from $\mathcal M$
onto a von Neumann subalgebra $\mathcal D $ of $\mathcal M$. Then
$\mathcal A$ is called a finite, maximal subdiagonal subalgebra of
$\mathcal M$ with respect to $\Phi$ if
 (i) $\mathcal A+\mathcal A^*$ is weak* dense in $\mathcal M;$
(ii) $\Phi(xy)=\Phi(x)\Phi(y)$ for all $x, y\in \mathcal A;$ (iii)
 $\tau\circ
\Phi=\tau;$ and (iv) $\mathcal D=\mathcal A\cap \mathcal A^*.$ (In
 \cite{Exel}, Excel showed that if $\mathcal{A}$ is weak* closed and $\tau$ satisfies (iii), then $\mathcal A$ (with respect to $\Phi$)
  is maximal among those subdiagonal subalgebras (with respect to
  $\Phi$)
satisfying (i), (ii) and (iv).)  Such a finite, maximal subdiagonal
subalgebra $\mathcal A$ of $\mathcal M$ is also called an $H^\infty$
space of $\mathcal M$. }  For each $1\le p< \infty$, we let $H^p$ be
the completion of Arveson's non-commutative $H^\infty$ with respect
to $\|\cdot\|_p.$

After Arveson's introduction of non-commutative $H^p$ spaces, there
are many studies  to obtain a Beurling's  theorem for invariant subspaces
in  non-commutative $H^p$ spaces (for example, see \cite{MMS}, \cite{Na}, \cite{NW}
and \cite{Sai}). It was Blecher and
  Labuschagne  who were able to show the following satisfactory version of
  Beurling's  theorem for $H^\infty$-invariant subspaces in a non-commutative $L^p(\mathcal M, \tau)$ space   in \cite{BL2}. {\em  Let $\mathcal M$ be a finite von Neumann
algebra with a faithful, tracial, normal state $\tau$, and $
H^{\infty}$ be a maximal subdiagonal subalgebra of $\mathcal{M}$
with $\mathcal D=H^\infty\cap (H^\infty)^*$. Suppose that $\mathcal
K$ is a closed $H^{\infty}$-right-invariant subspace of
$L^p(\mathcal{M},\tau),$ for some $1\leq p\leq \infty.$ (For
$p=\infty$ it is assumed that $\mathcal K$ is weak* closed.) Then
$\mathcal K$
may be written as a column $L^{p}$-sum $\mathcal K= \mathcal Z \bigoplus^{col}(\bigoplus^{col}%
_{i}u_{i}H^{p})$, where $ \mathcal Z $ is a closed (indeed weak*
closed if $p=\infty$)
  subspace of $L^{p}(\mathcal{M},\tau)$ such that $ \mathcal Z =[ \mathcal Z H_{0} ^{\infty}]_{p}$, and where $u_{i}$ are partial
isometries in ${\mathcal{M}}\cap \mathcal K$ satisfying certain
conditions (For more details, see \cite{BL2} or Lemma \ref{invariant on L^p}).} Here $\bigoplus^{col}%
_{i}u_{i}H^{p}$ and $ \mathcal Z =[ \mathcal Z H_{0} ^{\infty}]_{p}$
are of type 1, and  type 2 respectively (also see \cite{NW} for
definitions of invariant subspaces of different types).

 The concept of
unitarily invariant norms was introduced by von Neumann
\cite{vNeumann} for the purpose of metrizing matrix spaces. These
norms have now been   generalized and applied in many contexts (for
example, see \cite{Kunze}, \cite{McCarthy}, \cite{Simon} and etc).
Let $\mathcal M$ be a finite von Neumann algebra with a faithful
normal tracial state $\tau$. Besides all $L^p$-norms for $1\le p\le
\infty$, there are many other interesting examples of unitary
invariant norms on $\mathcal M$ (for example, see \cite{P},
\cite{P2}, \cite{Fang} and others). In the paper, we introduce a
class $N_c(\mathcal M,\tau)$ of normalized, unitarily invariant,
$\|\cdot\|_1$-dominating and continuous norms (see Definition
\ref{def2.2}), which properly contains all $L^p$-norms for $1\le
p<\infty$  and   many  unitarily invariant norms considered in
\cite{P}, \cite{P2} and \cite{Fang}. If $\alpha\in N_c(\mathcal
M,\tau)$ and $H^\infty$ is a finite, maximal subdiagonal subalgebra
of $\mathcal M$, then we let $L^\alpha(\mathcal M,\tau)$ and
$H^\alpha$ be the completion of $\mathcal M$, and $H^\infty$
respectively, with respect to the norm $\alpha$.  We also observe
that $\mathcal M$, and thus $H^\infty$, act naturally on
$L^\alpha(\mathcal M,\tau)$ by left, or right, multiplication (see
Lemma \ref{lemma2.3}). From Blecher and
  Labuschagne's result for non-commutative $H^p$ and $L^p(\mathcal M,\tau)$ spaces, it is
  natural to expect a Beurling's  theorem for $H^\alpha$ and $L^\alpha(\mathcal
  M,\tau)$ spaces.

In the paper, we consider  a version of Beurling's  theorem for
$H^\infty$-right invariant subspaces in $L^\alpha(\mathcal M,\tau)$,
and therefore for $H^\infty$-right invariant subspaces in
$H^\alpha$, when $\alpha\in N_c(\mathcal M,\tau)$. More
specifically,  we are able to obtain the following Beurling's  theorem
for $L^\alpha(\mathcal M ,\tau)$, built on Blecher and
  Labuschagne's result in the case of $p=\infty$.
 {
\renewcommand{\thetheorem}{\ref{mainthm}}
\begin{theorem}
Let $\mathcal M$ be a finite von Neumann algebra with a faithful,
normal, tracial state $\tau$. Let $H^\infty$  be a finite, maximal
subdiagonal subalgebra
  of $\mathcal M$ and $\mathcal D=H^\infty\cap (H^\infty)^*.$
 Let $\alpha$ be a normalized, unitarily
invariant, $\|\cdot \|_1$-dominating, continuous norm on $\mathcal
M$.

 If $\mathcal W$ is a   closed subspace of
$L^{\alpha}(\mathcal{M},\tau),$ then $\mathcal WH^{\infty}\subseteq
\mathcal W$ if and only if $$\mathcal W= \mathcal Z
\bigoplus^{col}(\bigoplus^{col}_{i\in \mathcal I}u_{i}H^{\alpha}),$$
where $ \mathcal Z $ is a closed    subspace of
$L^{\alpha}(\mathcal{M},\tau)$ such that $ \mathcal Z =[ \mathcal Z
H_{0} ^{\infty}]_{\alpha}$ , and where $u_{i}$ are partial
isometries in $\mathcal W  \cap  {\mathcal{M}}$ with
$u_{j}^{*}u_{i}=0$
if $i\neq j,$ and with $u_{i}^{*}u_{i}\in{\mathcal{D}}.$ Moreover, for each $i,$ $u_{i}%
^{*} \mathcal Z =\{0\},$ left multiplication by the $u_{i}u_{i}^{*}$
are contractive projections from $\mathcal W$ onto the summands
$u_{i} H^{\alpha},$ and left multiplication by
$1-\sum_{i}u_{i}u_{i}^{*}$ is a contractive projection from
$\mathcal W$ onto $ \mathcal Z .$
\addtocounter{theorem}{-1}
\end{theorem}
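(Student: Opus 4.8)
The plan is to derive the non-trivial (``only if'') implication from the weak*-closed case $p=\infty$ of the Blecher--Labuschagne theorem, Lemma~\ref{invariant on L^p}, by transporting its conclusion through the $\alpha$-completion; the two genuinely new inputs are the announced characterization theorem for $H^{\alpha}$ and density theorem for $L^{\alpha}(\mathcal M,\tau)$. We will use the following facts about $\alpha\in N_{c}(\mathcal M,\tau)$, all consequences of its being normalized, unitarily invariant, continuous and $\|\cdot\|_{1}$-dominating (cf.\ Lemma~\ref{lemma2.3} and Section~2): $\|x\|_{1}\leq\alpha(x)\leq\|x\|$ for $x\in\mathcal M$ (the second inequality since, by Russo--Dye, a contraction in $\mathcal M$ is a limit of averages of unitaries), giving continuous inclusions $\mathcal M\hookrightarrow L^{\alpha}(\mathcal M,\tau)\hookrightarrow L^{1}(\mathcal M,\tau)$ and, dually, $\mathcal M\subseteq L^{\alpha}(\mathcal M,\tau)^{*}=L^{\alpha^{*}}(\mathcal M,\tau)\subseteq L^{1}(\mathcal M,\tau)=\mathcal M_{*}$; left and right multiplication by a fixed element of $\mathcal M$ are bounded on $L^{\alpha}(\mathcal M,\tau)$, commute, and are contractive when the multiplier is a contraction (in particular a projection); if $(e_{\lambda})$ is a net of projections in $\mathcal M$ with $\tau(e_{\lambda})\to 0$ then $\alpha(e_{\lambda}\xi)\to 0$ for every $\xi\in L^{\alpha}(\mathcal M,\tau)$ (here continuity of $\alpha$ and density of $\mathcal M$ are used); and, since $L^{\alpha^{*}}(\mathcal M,\tau)\subseteq\mathcal M_{*}$, a net in $\mathcal M$ converging weak* in $\mathcal M$ converges weakly in $L^{\alpha}(\mathcal M,\tau)$, so for a convex $S\subseteq L^{\alpha}(\mathcal M,\tau)$ the weak*-closure in $\mathcal M$ of $S\cap\mathcal M$ lies in the $\alpha$-closure of $S$. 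For the ``if'' direction: if $\mathcal W=\mathcal Z\bigoplus^{col}\big(\bigoplus^{col}_{i\in\mathcal I}u_{i}H^{\alpha}\big)$ as stated, then $H^{\alpha}H^{\infty}\subseteq[H^{\infty}H^{\infty}]_{\alpha}\subseteq H^{\alpha}$ and $\mathcal Z H^{\infty}=[\mathcal Z H_{0}^{\infty}]_{\alpha}H^{\infty}\subseteq[\mathcal Z H_{0}^{\infty}H^{\infty}]_{\alpha}\subseteq[\mathcal Z H_{0}^{\infty}]_{\alpha}=\mathcal Z$ (because $\Phi(ab)=\Phi(a)\Phi(b)$ forces $H_{0}^{\infty}H^{\infty}\subseteq H_{0}^{\infty}$); as the column sum is $\alpha$-closed and right multiplication by $H^{\infty}$ is $\alpha$-bounded, $\mathcal WH^{\infty}\subseteq\mathcal W$, the ``moreover'' clauses being hypotheses here.

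For the ``only if'' direction, let $\mathcal W$ be $\alpha$-closed with $\mathcal WH^{\infty}\subseteq\mathcal W$ and put $\mathcal W_{0}:=\mathcal W\cap\mathcal M$. The density theorem for $L^{\alpha}(\mathcal M,\tau)$ gives $[\mathcal W_{0}]_{\alpha}=\mathcal W$. Let $\mathcal N$ be the weak*-closure of $\mathcal W_{0}$ in $\mathcal M$; it is a weak*-closed $H^{\infty}$-right invariant subspace of $\mathcal M$, and since $\mathcal W$ is norm-closed and convex (hence weakly closed) the last fact above yields $\mathcal N\subseteq\mathcal W$, so $[\mathcal N]_{\alpha}=\mathcal W$. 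Applying Lemma~\ref{invariant on L^p} with $p=\infty$ to $\mathcal N$ produces a weak*-closed subspace $\mathcal Z_{\infty}$ with $\mathcal Z_{\infty}=\overline{\mathcal Z_{\infty}H_{0}^{\infty}}^{\,w^{*}}$ and partial isometries $u_{i}\in\mathcal N\subseteq\mathcal M$ with $u_{j}^{*}u_{i}=0\ (i\neq j)$, $u_{i}^{*}u_{i}\in\mathcal D$, $u_{i}^{*}\mathcal Z_{\infty}=\{0\}$ and $\mathcal N=\mathcal Z_{\infty}\bigoplus^{col}\big(\bigoplus^{col}_{i}u_{i}H^{\infty}\big)$. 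Set $\mathcal Z:=[\mathcal Z_{\infty}]_{\alpha}$ and $q:=1-\sum_{i}u_{i}u_{i}^{*}$. Left multiplication by $q$ and by each $u_{i}u_{i}^{*}$ is $\alpha$-contractive and restricts on $\mathcal N$ to the coordinate projections of the column sum; moreover, by the ``net of projections'' fact above, $\alpha\big((1-q-\sum_{i\in F}u_{i}u_{i}^{*})\xi\big)\to 0$ for $\xi\in L^{\alpha}(\mathcal M,\tau)$ as the finite set $F$ exhausts $\mathcal I$, so the partial sums of $\sum_{i}u_{i}u_{i}^{*}\xi$ converge in $\alpha$. It follows that the $\alpha$-closure of the column sum is the column sum of the $\alpha$-closures of its summands; using $[u_{i}H^{\infty}]_{\alpha}=u_{i}[H^{\infty}]_{\alpha}=u_{i}H^{\alpha}$ (valid because $u_{i}^{*}u_{i}\in\mathcal D\subseteq H^{\infty}$) and $[H^{\infty}]_{\alpha}=H^{\alpha}$ by definition, we obtain $\mathcal W=[\mathcal N]_{\alpha}=\mathcal Z\bigoplus^{col}\big(\bigoplus^{col}_{i}u_{i}H^{\alpha}\big)$.

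It remains to verify the side conditions. The purely algebraic relations $u_{j}^{*}u_{i}=0$ and $u_{i}^{*}u_{i}\in\mathcal D$ are unchanged; $u_{i}^{*}\mathcal Z=\{0\}$ follows from $u_{i}^{*}\mathcal Z_{\infty}=\{0\}$ by $\alpha$-continuity of left multiplication by $u_{i}^{*}$; and the contractivity of the projections $u_{i}u_{i}^{*}$ and $1-\sum_{i}u_{i}u_{i}^{*}$ on $\mathcal W$, together with their ranges, is immediate from the preceding paragraph. For $\mathcal Z=[\mathcal Z H_{0}^{\infty}]_{\alpha}$: the inclusion $\supseteq$ holds since $\mathcal Z H_{0}^{\infty}=[\mathcal Z_{\infty}]_{\alpha}H_{0}^{\infty}\subseteq[\mathcal Z_{\infty}H_{0}^{\infty}]_{\alpha}\subseteq[\mathcal Z_{\infty}]_{\alpha}=\mathcal Z$, where $\mathcal Z_{\infty}H_{0}^{\infty}\subseteq\mathcal Z_{\infty}$ because $u_{j}^{*}(\mathcal Z_{\infty}H_{0}^{\infty})=\{0\}$ forces every element of $\mathcal Z_{\infty}H_{0}^{\infty}\subseteq\mathcal N$ to have vanishing $u_{j}$-component; for $\subseteq$, the weak*-density $\mathcal Z_{\infty}\subseteq\overline{\mathcal Z_{\infty}H_{0}^{\infty}}^{\,w^{*}}$ together with the last displayed fact of the first paragraph places $\mathcal Z_{\infty}$ inside $[\mathcal Z_{\infty}H_{0}^{\infty}]_{\alpha}\subseteq[\mathcal Z H_{0}^{\infty}]_{\alpha}$, whence $\mathcal Z=[\mathcal Z_{\infty}]_{\alpha}\subseteq[\mathcal Z H_{0}^{\infty}]_{\alpha}$. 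The characterization theorem for $H^{\alpha}$ enters both in the proof of the density theorem and in pinning down the summand $\mathcal Z$ as genuinely of type $2$ (one expects, for instance, the identity $H^{\alpha}\cap\mathcal M=H^{\infty}$).

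The main obstacle is the density theorem: that $\mathcal W\cap\mathcal M$ is $\alpha$-dense in an arbitrary $\alpha$-closed $H^{\infty}$-right invariant subspace $\mathcal W$ of $L^{\alpha}(\mathcal M,\tau)$. For an $L^{p}$-norm this comes from the established $L^{p}$-theory together with H\"older/interpolation inequalities, which have no analogue for a general $\alpha\in N_{c}(\mathcal M,\tau)$; instead one must argue intrinsically, combining the characterization of $H^{\alpha}$ with a Ces\`aro/Fej\'er-type averaging built from the weak*-density of $\mathcal A+\mathcal A^{*}$ in $\mathcal M$ and exploiting the continuity of $\alpha$ to control $\alpha$ by $\|\cdot\|_{1}$ on bounded sets. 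I expect that establishing this density statement uniformly over the whole class $N_{c}(\mathcal M,\tau)$ --- rather than for a single fixed $L^{p}$-norm --- is where the real work lies; the structural steps of reducing to, and reassembling from, the $p=\infty$ case of Blecher--Labuschagne are comparatively routine.
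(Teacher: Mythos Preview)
Your structural argument is essentially the paper's: reduce to the $p=\infty$ case of Blecher--Labuschagne via the density theorem, apply it to (the weak*-closure of) $\mathcal W\cap\mathcal M$, and transport the column decomposition back by taking $\alpha$-closures. The paper uses parts (1)--(4) of Theorem~\ref{invariant on L^alpha} explicitly (in particular part~(2) shows $\mathcal W\cap\mathcal M$ is already weak*-closed, so your $\mathcal N$ equals $\mathcal W_{0}$), whereas you re-derive the needed ``weak*-closure in $\mathcal M$ $\subseteq$ $\alpha$-closure'' facts from the dual-space inclusion $(L^{\alpha})^{*}\subseteq L^{1}(\mathcal M,\tau)=\mathcal M_{*}$. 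These are two packagings of the same content, since that inclusion is exactly Theorem~\ref{dualSpace}. One cosmetic point: the paper's dual is $L_{\overline{\alpha'}}(\mathcal M,\tau)$, not a space called $L^{\alpha^{*}}$; the latter notation is undefined here, though what you use about it is correct.

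Where your proposal diverges from the paper is in the speculation about \emph{how} the density theorem is proved. The paper does not use any Ces\`aro/Fej\'er averaging or control of $\alpha$ by $\|\cdot\|_{1}$ on bounded sets. The engine is a factorization result (Proposition~\ref{decomposition in L^alpha}): if $k\in\mathcal M$ and $k^{-1}\in L^{\alpha}(\mathcal M,\tau)$, then $k=wa$ with $w$ unitary in $\mathcal M$, $a\in H^{\infty}$, and $a^{-1}\in H^{\alpha}$. This is obtained by iterating Saito's $L^{2}$-factorization and then invoking the characterization $H^{\alpha}=H^{1}\cap L^{\alpha}(\mathcal M,\tau)$. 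For part~(3) of the density theorem, given $x\in\mathcal W$ with polar decomposition $x=v|x|$, one sets $k=f(|x|)$ with $f(t)=\min(1,1/t)$, so $k\in\mathcal M$ and $k^{-1}\in L^{\alpha}$; the factorization gives $a\in H^{\infty}$ with $xa\in\mathcal W\cap\mathcal M$ and $a^{-1}\in H^{\alpha}$, and a Hahn--Banach/H\"older argument with the dual space then shows $x$ cannot be separated from $[\mathcal W\cap\mathcal M]_{\alpha}$. So the ``real work'' lies in factorization plus the $H^{\alpha}$ characterization, not in any summability argument.
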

\addtocounter{theorem}{-1}
} Here $\bigoplus^{col}$ denotes an internal column sum (see Definition  \ref{def5.5}). Moreover,  $\bigoplus^{col}%
_{i}u_{i}H^{\alpha}$ and $ \mathcal Z =[ \mathcal Z H_{0}
^{\infty}]_{\alpha}$ are of type 1, and  of type 2 respectively (see
\cite{NW}, \cite{BL2} for definitions of invariant subspaces of
different types).

 Many tools used in a non-commutative $L^p(\mathcal
M,\tau)$ space are no longer available in an arbitrary
$L^\alpha(\mathcal M,\tau)$ space and new techniques or new proofs
need to be invented. Key ingredients in the proof of Theorem
{\ref{mainthm}}  include a characterization of  $H^\alpha$ (see
Theorem \ref{characterization of H^alpha}), a factorization result
in $L^\alpha(\mathcal M,\tau)$  (see Proposition \ref {decomposition
in L^alpha}), and a density theorem for $L^\alpha(\mathcal M,\tau)$
(see Theorem \ref{invariant on L^alpha}), which extend earlier
results by Saito in \cite{Sai}.

{\renewcommand{\thetheorem}{\ref{characterization of H^alpha}}
\begin{theorem}
 Let $\mathcal M$ be a finite von Neumann algebra with a faithful
normal tracial state $\tau$, and $H^\infty$ be a  finite, maximal
subdiagonal subalgebra
  of $\mathcal M$. Let $\alpha$ be a normalized, unitarily invariant, $\|\cdot \|_1$-dominating, continuous norm on $\mathcal M$.  Then
\[
H^{\alpha}=H^{1}\cap L^{\alpha}(\mathcal{M},\tau)=\{x\in
L^{\alpha}(\mathcal{M},\tau): \tau(xy)=0 \  \mbox{for all } y\in
H_{0}^{\infty} \}.
\]

\addtocounter{theorem}{-1}
\end{theorem}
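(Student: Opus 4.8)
The plan is to prove the two stated equalities simultaneously, by establishing the circular chain of inclusions
$$
H^{\alpha}\ \subseteq\ B\ \subseteq\ H^{1}\cap L^{\alpha}(\mathcal{M},\tau)\ \subseteq\ H^{\alpha},
\qquad B:=\{x\in L^{\alpha}(\mathcal{M},\tau):\tau(xy)=0\ \text{for all}\ y\in H_{0}^{\infty}\},
$$
after which the three sets coincide, which is the claim. The inclusion $H^{\alpha}\subseteq B$ is easy: for $a\in H^{\infty}$ and $y\in H_{0}^{\infty}$ one has $\tau(ay)=\tau(\Phi(ay))=\tau(\Phi(a)\Phi(y))=0$, since $\tau\circ\Phi=\tau$ and $\Phi(y)=0$, and since $\alpha$ is $\|\cdot\|_{1}$-dominating the functional $z\mapsto\tau(zy)$ is $\alpha$-continuous on $\mathcal{M}$, hence extends continuously to $L^{\alpha}(\mathcal{M},\tau)$ and vanishes on $H^{\alpha}=\overline{H^{\infty}}^{\alpha}$. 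The inclusion $B\subseteq H^{1}\cap L^{\alpha}(\mathcal{M},\tau)$ is also quick: $\|\cdot\|_{1}$-domination gives a norm-decreasing inclusion $L^{\alpha}(\mathcal{M},\tau)\hookrightarrow L^{1}(\mathcal{M},\tau)$, so any $x\in B$ lies in $L^{1}(\mathcal{M},\tau)$ and annihilates $H_{0}^{\infty}$; by the known $L^{1}$-description of $H^{1}$ for a maximal subdiagonal algebra, namely $H^{1}=\{z\in L^{1}(\mathcal{M},\tau):\tau(zy)=0\ \text{for all}\ y\in H_{0}^{\infty}\}$, it follows that $x\in H^{1}\cap L^{\alpha}(\mathcal{M},\tau)$.

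The essential step is the remaining inclusion $H^{1}\cap L^{\alpha}(\mathcal{M},\tau)\subseteq H^{\alpha}$, i.e., the $\alpha$-approximation of such $x$ by elements of $H^{\infty}$. I would reduce it to the assertion that $H^{\infty}=H^{1}\cap\mathcal{M}$ is $\alpha$-dense in $H^{1}\cap L^{\alpha}(\mathcal{M},\tau)$, and prove that by combining two things. First, the hypothesis that $\alpha$ is a \emph{continuous} norm: this makes the spectral truncations $x\,\chi_{[0,n]}(|x|)$ converge to $x$ in $\alpha$-norm and, more generally, makes a dominated-convergence principle available in $L^{\alpha}(\mathcal{M},\tau)$, so that $\|\cdot\|_{1}$- or in-measure convergence of approximants can be upgraded to $\alpha$-convergence once the approximants are suitably controlled. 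Second, the factorization Proposition \ref{decomposition in L^alpha}, together with a Riesz--Szeg\H{o}/outer-type factorization, which one uses to write $x$ as a product of an analytic factor and a factor that can be truncated inside $L^{\alpha}(\mathcal{M},\tau)$ while keeping the product in $H^{\infty}$, thereby producing bounded analytic $h_{n}\in H^{\infty}$ with $\alpha(h_{n}-x)\to0$. (If the density theorem, Theorem \ref{invariant on L^alpha}, is already available at this stage, an alternative is to apply it directly to the $\alpha$-closed, $H^{\infty}$-right-invariant subspace $\mathcal{W}=H^{1}\cap L^{\alpha}(\mathcal{M},\tau)$, whose intersection with $\mathcal{M}$ equals $H^{\infty}$, and conclude $\mathcal{W}\subseteq\overline{H^{\infty}}^{\alpha}=H^{\alpha}$.)

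The main obstacle is exactly this inclusion: there is no bounded ``analytic projection'' of $L^{\alpha}(\mathcal{M},\tau)$ onto $H^{\alpha}$ in general---this already fails for $L^{1}$---so one cannot simply project an arbitrary $\mathcal{M}$-approximant of $x$ back into $H^{\infty}$; the argument must genuinely use that $x$ lies in $H^{1}$ to build approximants that are at once analytic and bounded, and then invoke the continuity of $\alpha$ to pass to the $\alpha$-limit. By contrast, the two easy inclusions, the $\alpha$-continuity of pairing against $H_{0}^{\infty}$, and the standard subdiagonal facts $H^{1}=\{z\in L^{1}(\mathcal{M},\tau):z\perp H_{0}^{\infty}\}$ and $H^{1}\cap\mathcal{M}=H^{\infty}$ are routine.
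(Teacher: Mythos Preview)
Your chain of inclusions is correctly set up and the two easy inclusions are fine. The problem is the crucial step $H^{1}\cap L^{\alpha}(\mathcal{M},\tau)\subseteq H^{\alpha}$, where both of your proposed routes are circular in the paper's logical order. Proposition~\ref{decomposition in L^alpha} itself uses Theorem~\ref{characterization of H^alpha}: at the end of its proof one has $a_{1}^{-1}\in H^{1}\cap L^{\alpha}(\mathcal{M},\tau)$ and concludes $a_{1}^{-1}\in H^{\alpha}$ precisely by invoking the equality you are trying to prove. Consequently Theorem~\ref{invariant on L^alpha}, which relies on Proposition~\ref{decomposition in L^alpha}, is also downstream of the present theorem, so your parenthetical alternative is not available either. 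The ``truncation plus Riesz--Szeg\H{o} factorization'' idea that remains is not a proof as written: spectral truncations of $x$ land in $\mathcal{M}$ but not in $H^{\infty}$, and you have not indicated a concrete factorization that simultaneously makes the approximants bounded, analytic, and $\alpha$-close to $x$.

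The paper avoids all of this by a duality argument that does not require any approximation of $x$ by bounded analytic elements. It first identifies $(L^{\alpha})^{\sharp}$ with $L_{\overline{\alpha'}}(\mathcal{M},\tau)$ (Proposition~\ref{dual space}/Theorem~\ref{dualSpace}), then introduces the auxiliary set
\[
X=\{x\in L^{\alpha}(\mathcal{M},\tau):\tau(xy)=0\ \text{for all}\ y\in H_{0}^{1}\cap L_{\overline{\alpha'}}(\mathcal{M},\tau)\},
\]
testing against $H_{0}^{1}\cap L_{\overline{\alpha'}}$ rather than against $H_{0}^{\infty}$. The inclusion $H^{\alpha}\subseteq X$ is trivial; if it were strict, Hahn--Banach would produce $\xi\in L_{\overline{\alpha'}}$ vanishing on $H^{\infty}$, hence $\xi\in H_{0}^{1}\cap L_{\overline{\alpha'}}$ by Saito's description of $H_{0}^{1}$, and the definition of $X$ forces the separating functional to be zero---a contradiction (Lemma~\ref{charcterization1}). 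Separately, $H^{1}\cap L^{\alpha}=X$: one direction is your easy inclusion, and for the other, if $x\in H^{1}\cap L^{\alpha}$ and $y\in H_{0}^{1}\cap L_{\overline{\alpha'}}$, then H\"older (Theorem~\ref{thm3.11}) gives $xy\in L^{1}$ while multiplicativity of $\Phi$ on $H^{p}$ (Lemma~\ref{multiplicative on Hardy space}) gives $\Phi(xy)=\Phi(x)\Phi(y)=0$, so $\tau(xy)=0$ (Lemma~\ref{characterization2}). The point is that enlarging the test set from $H_{0}^{\infty}$ to $H_{0}^{1}\cap L_{\overline{\alpha'}}$ is exactly what makes the Hahn--Banach step close up, with no appeal to factorization or density results.
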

\addtocounter{theorem}{-1}}

{ {\renewcommand{\thetheorem}{\ref{decomposition in L^alpha}}
\begin{proposition}  Let $\mathcal M$ be a finite von
Neumann algebra with a faithful normal tracial state $\tau $, and
$H^\infty$  be a finite, maximal subdiagonal subalgebra
  of $\mathcal M$.  Let
$\alpha$ be a normalized, unitarily invariant, $\|\cdot
\|_1$-dominating, continuous norm on $\mathcal M$. If $k\in
\mathcal{M}$ and $k^{-1}\in L^{\alpha}(\mathcal{M},\tau),$ then
there are unitary operators $w_{1}, w_{2}\in \mathcal{M}$ and
operators $a_{1}, a_{2}\in H^{\infty}$ such that
$k=w_{1}a_{1}=a_{2}w_{2}$ and $a_{1}^{-1}, a_{2}^{-1}\in
H^{\alpha}.$ \addtocounter{theorem}{-1}
\end{proposition}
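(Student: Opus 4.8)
The plan is to establish the left factorization $k=w_{1}a_{1}$; the right one $k=a_{2}w_{2}$ then follows by applying it to $k^{*}$ relative to the adjoint algebra $(H^{\infty})^{*}=\{x^{*}:x\in H^{\infty}\}$, which is again a finite, maximal subdiagonal subalgebra of $\mathcal M$ with respect to $\Phi$ (now $(H^{\infty})^{*}\cap H^{\infty}=\mathcal D$) and whose associated analytic space is $(H^{\alpha})^{*}$, since $\alpha(x^{*})=\alpha(x)$. Indeed $(k^{*})^{-1}=(k^{-1})^{*}\in L^{\alpha}(\mathcal M,\tau)$, so the left factorization applied to $k^{*}$ yields a unitary $w_{2}$ and $b\in(H^{\infty})^{*}$ with $b^{-1}\in(H^{\alpha})^{*}$ and $k^{*}=w_{2}^{*}b$, whence $k=b^{*}w_{2}=a_{2}w_{2}$ with $a_{2}=b^{*}\in H^{\infty}$ and $a_{2}^{-1}=(b^{-1})^{*}\in H^{\alpha}$. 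For the left factorization, take the polar decomposition $k=vh$, $h=(k^{*}k)^{1/2}$. Since $k^{-1}\in L^{\alpha}(\mathcal M,\tau)\subseteq L^{1}(\mathcal M,\tau)$, the operator $k$ is one-to-one with dense range, so $v$ is a unitary in $\mathcal M$, $h^{-1}=k^{-1}v\in L^{\alpha}(\mathcal M,\tau)$, and (using $\log t\ge 1-t^{-1}$ and $h\le\|h\|\,1$) $\tau(h^{-1})<\infty$ forces $\log h\in L^{1}(\mathcal M,\tau)$, so $\Delta(h)=\exp\tau(\log h)>0$, where $\Delta$ is the Fuglede--Kadison determinant. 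It therefore suffices to produce $a\in H^{\infty}$ with $|a|=h$ and $a^{-1}\in H^{\alpha}$: then $a=uh$ for a unitary $u\in\mathcal M$ and $k=vh=(vu^{*})a$, so we set $w_{1}=vu^{*}$, $a_{1}=a$.

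Let $h_{n}=h+\tfrac1n 1$, which is positive and invertible in $\mathcal M$, with $h_{n}\to h$ in operator norm and $0\le h_{n}^{-1}\le h^{-1}$. As $\Delta(h_{n})>0$, the $L^{2}$-outer factorization for finite maximal subdiagonal algebras (see \cite{Arv, BL2}) writes $h_{n}=v_{n}a_{n}$ with $v_{n}$ unitary and $a_{n}$ outer in $H^{2}$; since $a_{n}=v_{n}^{*}h_{n}\in\mathcal M$ we get $a_{n}\in\mathcal M\cap H^{2}=H^{\infty}$, $a_{n}^{*}a_{n}=h_{n}^{2}$ (so $|a_{n}|=h_{n}$), and, $a_{n}^{-1}=h_{n}^{-1}v_{n}$ being bounded, the outerness of $a_{n}$ forces $a_{n}^{-1}\in H^{\infty}$ as well. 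Because $\|a_{n}\|=\|h_{n}\|\le\|h\|+1$, by weak* compactness we pass to a subnet with $a_{n_{i}}\to a$ weak*, where $a\in H^{\infty}$ since $H^{\infty}$ is weak*-closed, and we refine it further so that $v_{n_{i}}^{*}\to u$ weak* with $\|u\|\le 1$. From $a_{n_{i}}=v_{n_{i}}^{*}h+\tfrac1{n_{i}}v_{n_{i}}^{*}$, the fact that $\tfrac1{n_{i}}v_{n_{i}}^{*}\to 0$ in norm, and weak*-continuity of right multiplication by the fixed $h$, we obtain $a=uh$; hence $a^{*}a=hu^{*}uh\le h^{2}$, so $|a|\le h$, $\log|a|\le\log h$, and $\Delta(a)\le\Delta(h)$. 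On the other hand $\Delta(a_{n_{i}})=\Delta(h_{n_{i}})=\exp\tau\!\bigl(\log(h+\tfrac1{n_{i}})\bigr)\to\exp\tau(\log h)=\Delta(h)$ by monotone convergence, while $\Delta$ is upper semicontinuous on operator-norm-bounded sets for the weak* topology (a standard property of the Fuglede--Kadison determinant); hence $\Delta(a)\ge\Delta(h)$. Thus $\tau(\log h-\log|a|)=0$ with $\log h-\log|a|\ge 0$, so faithfulness of $\tau$ gives $|a|=h$. In particular $|a|=h$ has dense range, so the polar decomposition $a=\tilde u h$ has $\tilde u$ a unitary; comparing with $a=uh$ and using that $h$ has dense range, $u=\tilde u$ is unitary and the reduction above applies.

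It remains to check $a^{-1}\in H^{\alpha}$. We have $a^{-1}=h^{-1}u^{*}\in L^{\alpha}(\mathcal M,\tau)$, so by Theorem \ref{characterization of H^alpha} it is enough to show $a^{-1}\in H^{1}$; since $a_{n_{i}}^{-1}\in H^{\infty}\subseteq H^{1}$ and $H^{1}$ is $\|\cdot\|_{1}$-closed, it suffices that $a_{n_{i}}^{-1}\to a^{-1}$ in $\|\cdot\|_{1}$. The unitaries $v_{n_{i}}^{*}$ converge weak* to the unitary $u$, hence also in $\|\cdot\|_{2}$; writing $a_{n_{i}}^{-1}=h_{n_{i}}^{-1}v_{n_{i}}$ and $a^{-1}=h^{-1}u^{*}$, we have
\[
a_{n_{i}}^{-1}-a^{-1}=\bigl(h_{n_{i}}^{-1}-h^{-1}\bigr)v_{n_{i}}+h^{-1}\bigl(v_{n_{i}}-u^{*}\bigr).
\]
The first term has $\|\cdot\|_{1}$-norm $\|h_{n_{i}}^{-1}-h^{-1}\|_{1}=\tau\!\bigl(h^{-1}-h_{n_{i}}^{-1}\bigr)\to 0$, as $h_{n}^{-1}\uparrow h^{-1}$ in $L^{1}(\mathcal M,\tau)$; and the second tends to $0$ in $\|\cdot\|_{1}$ because $h^{-1}\in L^{1}(\mathcal M,\tau)$ while $v_{n_{i}}-u^{*}\to 0$ in measure and stays uniformly bounded. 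Hence $a^{-1}\in H^{1}\cap L^{\alpha}(\mathcal M,\tau)=H^{\alpha}$, which completes the proof.

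The decisive point is the limit step in the second paragraph: the inequality $|a|\le h$ is immediate from $a^{*}a\le h^{2}$, but the matching inequality $\Delta(a)\ge\Delta(h)$ — which forces $|a|=h$ and makes $u$ unitary — rests on the upper semicontinuity of the Fuglede--Kadison determinant together with the faithfulness of $\tau$. The continuity hypothesis on $\alpha$ enters only through Theorem \ref{characterization of H^alpha}, which is exactly what upgrades the $\|\cdot\|_{1}$-limit conclusion $a^{-1}\in H^{1}\cap L^{\alpha}(\mathcal M,\tau)$ to $a^{-1}\in H^{\alpha}$.
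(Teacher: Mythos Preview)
Your argument has a genuine gap at the determinant step. You assert that the Fuglede--Kadison determinant is upper semicontinuous on norm-bounded sets for the weak* topology, and use this to deduce $\Delta(a)\ge\lim\Delta(a_{n_i})=\Delta(h)$, hence $|a|=h$ and $u$ unitary. That semicontinuity claim is false: in $\mathcal{M}=L^\infty[0,1]$ the unitaries $u_n(t)=e^{2\pi i nt}$ converge weak* to $0$, yet $\Delta(u_n)=1$ for all $n$ while $\Delta(0)=0$. (The determinant is upper semicontinuous for SOT or for $\|\cdot\|_p$-convergence, but weak* convergence is too weak.) Without $|a|=h$ you cannot conclude that $u$ is unitary, and the remainder collapses with it: your passage from $v_{n_i}^*\to u$ weak* to convergence in $\|\cdot\|_2$ (and hence in measure) explicitly uses that the limit $u$ is unitary, so the claimed $\|\cdot\|_1$-convergence $a_{n_i}^{-1}\to a^{-1}$ is unsupported as well.

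The paper's proof avoids approximation altogether by bootstrapping from Saito's $L^2$ factorization (Lemma~\ref{decomposition}). From $k^{-1}\in L^\alpha\subseteq L^1$ one has $h^{-1}\in L^1$, hence $h^{-1/2}\in L^2$, so Saito's lemma applies to $h^{1/2}$: write $h^{1/2}=u_1h_1$ with $u_1$ unitary, $h_1\in H^\infty$, $h_1^{-1}\in H^2$. Apply Saito again to $h_1u_1\in\mathcal{M}$ (whose inverse $u_1^*h_1^{-1}$ lies in $L^2$) to get $h_1u_1=u_2h_2$ with $h_2^{-1}\in H^2$. Then $k=vh=(vu_1u_2)(h_2h_1)=w_1a_1$ with $a_1=h_2h_1\in H^\infty$ and $a_1^{-1}=h_1^{-1}h_2^{-1}\in H^2\cdot H^2\subseteq H^1$. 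Finally $a_1^{-1}=k^{-1}w_1\in L^\alpha$, so Theorem~\ref{characterization of H^alpha} gives $a_1^{-1}\in H^1\cap L^\alpha=H^\alpha$. No limit, no determinant, no semicontinuity is needed; the whole proposition is a two-step lift from the $L^2$ case, with the continuity hypothesis on $\alpha$ entering only through Theorem~\ref{characterization of H^alpha}.
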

\addtocounter{theorem}{-1}}

{\renewcommand{\thetheorem}{\ref{invariant on L^alpha}}
\begin{theorem}  Let $\mathcal M$ be a finite von Neumann algebra with a faithful
normal tracial state $\tau$, and $H^\infty$  be a finite, maximal
subdiagonal subalgebra
  of $\mathcal M$. Let $\alpha$ be a normalized, unitarily invariant, $\|\cdot \|_1$-dominating, continuous norm on $\mathcal M$. 

If $\mathcal W$ is a closed subspace of $L^\alpha(\mathcal M,\tau) $
and $\mathcal N$ is a weak*-closed linear subspace of $\mathcal M$
such that $\mathcal  W H^\infty\subseteq \mathcal W$ and $  \mathcal
N H^\infty\subseteq \mathcal N,$  then

\begin{enumerate}
\item $\mathcal N=[\mathcal N]_{\alpha}\cap \mathcal M;$

\item $\mathcal W\cap\mathcal M $ is weak* closed in $\mathcal M;$

\item $\mathcal W=[  \mathcal W\cap\mathcal M]_{\alpha};$

\item if $\mathcal S$ is a subspace of $\mathcal M$ such that $\mathcal S H^\infty\subseteq \mathcal S$, then $$[\mathcal
S]_\alpha= [\overline {\mathcal S}^{w*}]_\alpha,$$ where $\overline
{\mathcal S}^{w*}$ is the weak*-closure of $\mathcal S$ in $\mathcal
M$.
\end{enumerate}
\addtocounter{theorem}{-1}
\end{theorem}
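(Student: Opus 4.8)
The plan is to establish (1)--(4) by leaning on the duality between $L^{\alpha}(\mathcal M,\tau)$ and its associate space $L^{\alpha'}(\mathcal M,\tau)$, where $\alpha'(y)=\sup\{|\tau(xy)|:x\in\mathcal M,\ \alpha(x)\le1\}$. Since $\alpha$ is normalized and unitarily invariant one has $\alpha\le\|\cdot\|_{\infty}$, so $\alpha'\ge\|\cdot\|_{1}$; and since $\alpha$ is a \emph{continuous} norm, every $\alpha$-bounded functional on $\mathcal M$ is normal, so that $(L^{\alpha})^{*}$ is carried by the trace pairing into $L^{1}(\mathcal M,\tau)$ (in fact onto $L^{\alpha'}$). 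Granting this, I would first dispatch (2): by Krein--Smulian it is enough to see that the unit ball of $\mathcal W\cap\mathcal M$ is weak* closed in $\mathcal M$, and if $(x_{j})$ is a bounded net in $\mathcal W\cap\mathcal M$ with $x_{j}\to x$ weak* in $\mathcal M$, then $\tau(x_{j}h)\to\tau(xh)$ for every $h\in L^{1}$, hence for every functional in $(L^{\alpha})^{*}$, so $x_{j}\to x$ weakly in $L^{\alpha}$; as $\mathcal W$ is convex and $\alpha$-closed, it is weakly closed, so $x\in\mathcal W\cap\mathcal M$. Assertion (4) then follows formally: $\mathcal V:=[\mathcal S]_{\alpha}$ is an $H^{\infty}$-invariant closed subspace of $L^{\alpha}$ (right multiplication by $H^{\infty}$ is $\alpha$-bounded by Lemma \ref{lemma2.3}), so by (2) $\mathcal V\cap\mathcal M$ is weak* closed and contains $\mathcal S$, hence contains $\overline{\mathcal S}^{w*}$; thus $[\overline{\mathcal S}^{w*}]_{\alpha}\subseteq[\mathcal S]_{\alpha}$, and the reverse inclusion is trivial.

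For (1) I would invoke the Blecher--Labuschagne theorem for $\mathcal M$ (Lemma \ref{invariant on L^p}, $p=\infty$) to write $\mathcal N=\mathcal Z\bigoplus^{col}(\bigoplus^{col}_{i}u_{i}H^{\infty})$, with $\mathcal Z$ weak* closed, $H^{\infty}$-invariant, and satisfying $[\mathcal Z H_{0}^{\infty}]^{w*}=\mathcal Z$ (``type $2$''), and with the $u_{i}$ the partial isometries described there. Taking $\alpha$-closures and using the column-orthogonality relations $u_{j}^{*}u_{i}=0$ $(i\ne j)$, $u_{i}^{*}u_{i}\in\mathcal D$, $u_{i}^{*}\mathcal Z=\{0\}$ — which persist in $L^{\alpha}$ because left multiplication by $u_{i}^{*}$ is $\alpha$-bounded — one should get $[\mathcal N]_{\alpha}=[\mathcal Z]_{\alpha}\bigoplus^{col}(\bigoplus^{col}_{i}u_{i}H^{\alpha})$, so that $[\mathcal N]_{\alpha}\cap\mathcal M=\mathcal N$ reduces to the two identities $u_{i}H^{\alpha}\cap\mathcal M=u_{i}H^{\infty}$ and $[\mathcal Z]_{\alpha}\cap\mathcal M=\mathcal Z$. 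The first reduces, via $u_{i}^{*}u_{i}\in\mathcal D$, to $H^{\alpha}\cap\mathcal M=H^{\infty}$, which is immediate on comparing $H^{\alpha}=\{x\in L^{\alpha}:\tau(xy)=0\ \forall y\in H_{0}^{\infty}\}$ (Theorem \ref{characterization of H^alpha}) with Arveson's characterization $H^{\infty}=\{x\in\mathcal M:\tau(xy)=0\ \forall y\in H_{0}^{\infty}\}$.

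The hard part, which I expect to be the main obstacle, is the type-$2$ identity $[\mathcal Z]_{\alpha}\cap\mathcal M=\mathcal Z$, together with the density assertion (3). For a general $\alpha\in N_{c}(\mathcal M,\tau)$ the usual $L^{p}$ devices — Hölder's inequality, complex interpolation, dominated or monotone convergence for the norm — are unavailable, so everything must be wrung out of the two surviving structural features: that $\alpha$ is $\|\cdot\|_{1}$-dominating, and that $\alpha$ is continuous, i.e.\ $\alpha(e_{n})\to0$ whenever $e_{n}\downarrow0$. I would first use (4) to get $[\mathcal Z]_{\alpha}=[\mathcal Z H_{0}^{\infty}]_{\alpha}$ (and, iterating, $[\mathcal Z(H_{0}^{\infty})^{n}]_{\alpha}=[\mathcal Z]_{\alpha}$ for all $n$), and then the $\mathcal M$-module structure of type-$2$ subspaces to run a spectral-truncation argument: for $\xi\in[\mathcal Z]_{\alpha}$ with polar decomposition $\xi=v|\xi|$ and $e_{n}=\chi_{[0,n]}(|\xi|)$, one has $\xi e_{n}\in\mathcal M$ and, for any $m\in\mathcal M$ with $\alpha(\xi-m)<\varepsilon$,
\[
\alpha(\xi-\xi e_{n})\le\alpha\big((\xi-m)(1-e_{n})\big)+\|m\|\,\alpha(1-e_{n})\le\varepsilon+\|m\|\,\alpha(1-e_{n})\longrightarrow\varepsilon
\]
as $n\to\infty$, by continuity of $\alpha$, so $\xi e_{n}\to\xi$ in $\alpha$; the factorization Proposition \ref{decomposition in L^alpha} is then used to absorb the truncating projections into $H^{\infty}$-factors (up to unitaries) so that these approximants stay inside the invariant subspace. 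Finally (3) follows by applying Lemma \ref{invariant on L^p} to $\mathcal W\cap\mathcal M$ — which is weak* closed by (2) and $H^{\infty}$-invariant — writing it as $\mathcal Z_{0}\bigoplus^{col}(\bigoplus^{col}_{i}u_{i}H^{\infty})$, taking $\alpha$-closures, and verifying $\mathcal W=[\mathcal W\cap\mathcal M]_{\alpha}$ by the same truncation/factorization mechanism applied to each summand; the column-sum bookkeeping (Definition \ref{def5.5}) has to be redone with $\alpha$ replacing $\|\cdot\|_{p}$, but is otherwise routine.
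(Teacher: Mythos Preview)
Your arguments for (2) and (4) are correct and essentially match the paper's (your Krein--Smulian phrasing of (2) is equivalent to the paper's Hahn--Banach contradiction, both resting on the inclusion $(L^\alpha)^*\subseteq L^1$ furnished by Theorem~\ref{dualSpace}).

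For (1) and (3), however, your detour through the Blecher--Labuschagne decomposition is both unnecessary and incomplete. The paper proves (1) and (3) \emph{directly} by Hahn--Banach plus a single application of Proposition~\ref{decomposition in L^alpha}, with no appeal to Lemma~\ref{invariant on L^p} at this stage. For (3): if $x\in\mathcal W\setminus[\mathcal W\cap\mathcal M]_\alpha$, choose a separating $\xi\in L_{\overline{\alpha'}}$; set $k=f(|x|)$ with $f(t)=\min(1,1/t)$, so $k\in\mathcal M$, $k^{-1}\in L^\alpha$, and $|x|k$ is bounded; factor $k=au$ with $a\in H^\infty$ and $a^{-1}\in H^\alpha$; then $xa=v(|x|k)u^*\in\mathcal W H^\infty\cap\mathcal M\subseteq\mathcal W\cap\mathcal M$, so $\tau(\xi xab)=0$ for every $b\in H^\infty$, hence for every $b\in H^\alpha$ by H\"older and density, and $b=a^{-1}$ gives $\tau(\xi x)=0$, a contradiction. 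For (1) the mirror image works: the factorization trick is applied to the separating $\xi\in L^1$ rather than to $x$, producing a \emph{bounded} replacement $z=a_Na\xi\in\mathcal M$ (with $a_N\in H^\infty$ approximating $a^{-1}\in H^1$) that still annihilates $\mathcal N$ because $\mathcal N H^\infty\subseteq\mathcal N$, but does not annihilate $x$. Your sketch contains the right ingredient (Proposition~\ref{decomposition in L^alpha}) but misassembles it: you first truncate by the spectral projection $e_n=\chi_{[0,n]}(|\xi|)$ and then propose to ``absorb $e_n$ into $H^\infty$-factors'', yet $e_n$ is not invertible, so the proposition does not apply to it --- the whole point is to build an invertible $k$ from the outset and factor \emph{that}. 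Separately, your reduction of (1) to the identity $[\mathcal Z]_\alpha\cap\mathcal M=\mathcal Z$ leans on an unproved ``$\mathcal M$-module structure of type-2 subspaces'', and even granting it, your truncation is vacuous there since any $\xi\in[\mathcal Z]_\alpha\cap\mathcal M$ is already bounded. Finally, your plan for (3) --- decompose $\mathcal W\cap\mathcal M$ and take $\alpha$-closures --- goes the wrong direction: it describes $[\mathcal W\cap\mathcal M]_\alpha$ but supplies no mechanism for placing an arbitrary $x\in\mathcal W$ inside it.
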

\addtocounter{theorem}{-1}}

We end the paper with two quick applications of Theorem
{\ref{mainthm}}, which contain classical Beurling's  theorem as a
special case by letting $\mathcal M$   be $L^\infty(\mathbb T,\mu)$.

{\renewcommand{\thetheorem}{\ref{Cor5.8}}
\begin{corollary}
Let $\mathcal M$ be a finite von Neumann algebra with a faithful,
normal, tracial state $\tau$.
 Let $\alpha$ be a normalized, unitarily
invariant, $\|\cdot \|_1$-dominating, continuous norm on $\mathcal
M$. If $\mathcal W$ is a   closed subspace of
$L^{\alpha}(\mathcal{M},\tau) $ such that $\mathcal W\mathcal
M\subseteq \mathcal W$, then there exists a projection $e$ in
$\mathcal M$ such that $\mathcal W= eL^\alpha(\mathcal M,\tau)$.

\addtocounter{theorem}{-1}
\end{corollary}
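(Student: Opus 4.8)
The plan is to derive this as the special case of Theorem~\ref{mainthm} in which the maximal subdiagonal algebra is all of $\mathcal M$. First I would observe that $H^\infty:=\mathcal M$, together with the conditional expectation $\Phi:=\mathrm{id}_{\mathcal M}$ onto $\mathcal D:=\mathcal M$, satisfies conditions (i)--(iv): $\mathcal M+\mathcal M^*=\mathcal M$ is weak$^*$ dense, $\Phi$ is multiplicative, $\tau\circ\Phi=\tau$, and $\mathcal M\cap\mathcal M^*=\mathcal M$. For this choice $H_0^\infty=\mathcal M\cap\ker\Phi=\{0\}$ and $H^\alpha=[\mathcal M]_\alpha=L^\alpha(\mathcal M,\tau)$, and the hypothesis $\mathcal W\mathcal M\subseteq\mathcal W$ is precisely $\mathcal WH^\infty\subseteq\mathcal W$, so Theorem~\ref{mainthm} applies.

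Next I would feed $\mathcal W$ into Theorem~\ref{mainthm}. It yields $\mathcal W=\mathcal Z\bigoplus^{col}\big(\bigoplus^{col}_{i\in\mathcal I}u_iH^\alpha\big)$ where $\mathcal Z=[\mathcal Z H_0^\infty]_\alpha=[\{0\}]_\alpha=\{0\}$; hence $\mathcal W=\bigoplus^{col}_{i\in\mathcal I}u_iL^\alpha(\mathcal M,\tau)$, with the $u_i\in\mathcal W\cap\mathcal M$ partial isometries satisfying $u_j^*u_i=0$ for $i\neq j$. Writing $q_i:=u_iu_i^*$, from $u_j^*u_i=0$ I get $q_iq_j=u_i(u_i^*u_j)u_j^*=0$ for $i\neq j$, so the $q_i$ are mutually orthogonal projections in $\mathcal M$ and $e:=\sum_{i\in\mathcal I}q_i$ is the weak$^*$ limit of its partial sums, hence a projection in $\mathcal M$. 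Since $u_i\xi=q_i(u_i\xi)$ and $q_i\eta=u_i(u_i^*\eta)$ for $\xi,\eta\in L^\alpha(\mathcal M,\tau)$, each summand is $u_iH^\alpha=q_iL^\alpha(\mathcal M,\tau)$.

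Finally I would identify $\mathcal W$ with $eL^\alpha(\mathcal M,\tau)$ using the concluding statements of Theorem~\ref{mainthm} and Theorem~\ref{invariant on L^alpha}. Left multiplication by $1-e=1-\sum_iq_i$ is a contractive projection of $\mathcal W$ onto $\mathcal Z=\{0\}$, so $(1-e)\mathcal W=\{0\}$, i.e.\ $\mathcal W\subseteq eL^\alpha(\mathcal M,\tau)$. For the reverse inclusion, left multiplication by each $q_i$ maps $\mathcal W$ onto $q_iL^\alpha(\mathcal M,\tau)$, so $q_i\mathcal M\subseteq\mathcal W\cap\mathcal M$; hence the subspace $\mathcal S:=\operatorname{span}\{q_ia:i\in\mathcal I,\ a\in\mathcal M\}$ of $\mathcal M$ satisfies $\mathcal S H^\infty=\mathcal S\mathcal M\subseteq\mathcal S$, has weak$^*$-closure $e\mathcal M$, and lies inside $\mathcal W\cap\mathcal M$. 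Then Theorem~\ref{invariant on L^alpha}(4) and~(3) give $[\mathcal S]_\alpha=[e\mathcal M]_\alpha=eL^\alpha(\mathcal M,\tau)$ (the last equality from Lemma~\ref{lemma2.3}, since left multiplication by $e$ extends to a contractive idempotent on $L^\alpha(\mathcal M,\tau)$ whose closed range contains the $\alpha$-dense set $e\mathcal M$) and $[\mathcal S]_\alpha\subseteq[\mathcal W\cap\mathcal M]_\alpha=\mathcal W$. Thus $\mathcal W=eL^\alpha(\mathcal M,\tau)$.

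I expect the only real point of care to be the collapse of the internal column sum $\bigoplus^{col}_iq_iL^\alpha(\mathcal M,\tau)$ to the single block $eL^\alpha(\mathcal M,\tau)$ with $e=\sum_iq_i$; this is exactly what the contractive-projection statements in Theorem~\ref{mainthm}, together with the density theorem (Theorem~\ref{invariant on L^alpha}), are designed to deliver. Should a self-contained argument be preferred, one can instead note that for $a\in\mathcal M$ one has $\|(e-\sum_{i\in F}q_i)a\|_\alpha\le\|a\|_{\mathcal M}\,\alpha\big(\sum_{i\notin F}q_i\big)\to0$ by continuity of $\alpha$, whence $e\mathcal M\subseteq[\mathcal S]_\alpha$ and the identification follows from $\alpha$-density of $\mathcal M$ in $L^\alpha(\mathcal M,\tau)$. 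Everything else reduces to bookkeeping once $H^\infty$ is taken to be $\mathcal M$.
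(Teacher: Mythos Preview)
Your proof is correct and follows essentially the same route as the paper: specialize Theorem~\ref{mainthm} to $H^\infty=\mathcal M$, so that $\mathcal D=\mathcal M$, $H_0^\infty=\{0\}$, $H^\alpha=L^\alpha(\mathcal M,\tau)$, $\mathcal Z=\{0\}$, and $u_iH^\alpha=u_iu_i^*L^\alpha(\mathcal M,\tau)$, then set $e=\sum_i u_iu_i^*$. The paper compresses the final identification $\bigoplus^{col}_i u_iu_i^*L^\alpha(\mathcal M,\tau)=eL^\alpha(\mathcal M,\tau)$ into a single displayed line, whereas you justify it more carefully via the contractive projection $1-e$ and Theorem~\ref{invariant on L^alpha}; your extra care is warranted but does not constitute a different argument.
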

\addtocounter{theorem}{-1}}

{\renewcommand{\thetheorem}{\ref{Cor5.9}}
\begin{corollary}
Let $\mathcal M$ be a finite von Neumann algebra with a faithful,
normal, tracial state $\tau$. Let $H^\infty$  be a finite, maximal
subdiagonal subalgebra
  of $\mathcal M$  such that $   H^\infty\cap (H^\infty)^*=\mathbb CI.$
 Let $\alpha$ be a normalized, unitarily
invariant, $\|\cdot \|_1$-dominating, continuous norm on $\mathcal
M$.

 Assume that $\mathcal W$ is a   closed subspace of
$L^{\alpha}(\mathcal{M},\tau).$  If $\mathcal W$ is  simply
$H^\infty$-right invariant, i.e. $\mathcal WH^{\infty}\subsetneqq
\mathcal W$, then there exists a unitary $u\in \mathcal W\cap
\mathcal M$ such that $\mathcal W=uH^\alpha$.

\addtocounter{theorem}{-1}
\end{corollary}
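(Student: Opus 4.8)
The plan is to apply the main Beurling theorem (Theorem \ref{mainthm}) directly and then argue that under the hypothesis $H^\infty\cap(H^\infty)^*=\mathbb{C}I$ the decomposition collapses to a single term $uH^\alpha$ with $u$ a unitary. So first I would invoke Theorem \ref{mainthm} to write $\mathcal W=\mathcal Z\bigoplus^{col}(\bigoplus^{col}_{i\in\mathcal I}u_iH^\alpha)$, where the $u_i$ are partial isometries in $\mathcal W\cap\mathcal M$ with $u_j^*u_i=0$ for $i\neq j$ and $u_i^*u_i\in\mathcal D=\mathbb{C}I$. Since $\mathcal D=\mathbb{C}I$ and $u_i^*u_i$ is a nonzero projection in $\mathcal D$, we must have $u_i^*u_i=I$, i.e. each $u_i$ is an isometry.

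Next I would rule out the type-$2$ summand $\mathcal Z$ and show the index set $\mathcal I$ is a singleton. For the index set: if $i\neq j$ were both in $\mathcal I$, then $u_i,u_j$ are isometries with $u_j^*u_i=0$; but $u_j^*u_i=0$ together with $u_ju_j^*, u_iu_i^*$ being projections forces $u_iu_i^*$ and $u_ju_j^*$ to be orthogonal nonzero projections, yet $\mathcal I$ can certainly have more than one element a priori. The key point is the hypothesis that $\mathcal W$ is \emph{simply} right invariant, $\mathcal WH^\infty\subsetneqq\mathcal W$, which must be used to exclude the type-$2$ part. Recall a type-$2$ space $\mathcal Z=[\mathcal Z H_0^\infty]_\alpha$ satisfies $\mathcal Z H^\infty=\mathcal Z$ (one inclusion is $\mathcal Z H^\infty\subseteq[\mathcal Z H_0^\infty]_\alpha\cdot H^\infty$; more precisely $[\mathcal Z H_0^\infty]_\alpha\subseteq[\mathcal Z H^\infty]_\alpha\subseteq\overline{\mathcal Z}=\mathcal Z$ since $H_0^\infty\subseteq H^\infty$ and then use that $\mathcal Z\subseteq[\mathcal Z H_0^\infty]_\alpha$). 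Likewise each summand $u_iH^\alpha$ satisfies $u_iH^\alpha\cdot H^\infty\subseteq u_i H^\alpha$ but this inclusion is strict since $u_i H^\infty\subsetneqq u_i \overline{H_0^\infty\oplus\mathcal D}$... actually the cleanest route: $u_iH^\alpha H^\infty\supseteq u_i H^\infty H^\infty$ and $\overline{u_i H_0^\infty}^\alpha\subsetneqq u_i H^\alpha$ because $u_i\notin\overline{u_i H_0^\infty}^\alpha$ (as $\tau(u_i^*u_i)=1\neq 0$ while $\tau$ annihilates $H_0^\infty$ — here I would use Theorem \ref{characterization of H^alpha}). So $\mathcal W H^\infty=\mathcal W$ would follow if there were no type-$1$ summands, contradicting simple invariance; hence $\mathcal I\neq\emptyset$. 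Then, if both a nonzero $\mathcal Z$ and some $u_iH^\alpha$ are present, or two distinct summands, I would need the hypothesis $\mathcal D=\mathbb{C}I$ to force orthogonality of the ranges $u_iu_i^*$ and $1-\sum u_i u_i^*$ to sum to $I$, combined with the fact that $u_i u_i^*\in\mathcal M$ and we want a single unitary.

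The cleanest way to finish is: set $p_i=u_iu_i^*$ and $q=1-\sum_i p_i$ (the projection onto $\mathcal Z$'s "support"); these are mutually orthogonal projections in $\mathcal M$ summing to $I$, by the "moreover" clause of Theorem \ref{mainthm}. I would apply the $\tau\circ\Phi=\tau$ property and the fact that $\Phi$ maps onto $\mathcal D=\mathbb{C}I$: since $u_i^*u_i=I$, we get $\Phi(I)=\Phi(u_i^*u_i)$, and Arveson's theory gives that for an isometry $u_i\in\mathcal M$ with $u_i\in H^\infty$ we have... rather, the crucial structural fact (from Blecher–Labuschagne, cited as Lemma \ref{invariant on L^p} context) is that when $\mathcal D=\mathbb CI$ a right-invariant subspace is either type $1$ with a single unitary generator or type $2$. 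I would cite/derive that the $u_i$ with $u_i^*u_i=I$ and the orthogonality $u_j^*u_i=0$ for $i\ne j$ force $\sum_i u_iu_i^*\le I$; applying $\Phi$ and $\tau$, $\sum_i\tau(u_iu_i^*)=\sum_i\tau(u_i^*u_i)=|\mathcal I|\le 1$, so $|\mathcal I|\le 1$. If $\mathcal I=\emptyset$ then $\mathcal W=\mathcal Z$ is type $2$, so $\mathcal WH^\infty=\mathcal W$, contradicting simple invariance; hence $|\mathcal I|=1$, say $\mathcal I=\{1\}$ with $u_1$ an isometry, $u_1u_1^*=I-q$. Finally $q=1-u_1u_1^*$ and $u_1^*\mathcal Z=\{0\}$ with $\mathcal Z=(1-u_1u_1^*)\mathcal W=q\mathcal W$; but $\tau(q)=1-\tau(u_1u_1^*)=1-1=0$, and faithfulness of $\tau$ forces $q=0$, so $u_1u_1^*=I$, i.e. $u_1$ is unitary and $\mathcal Z=\{0\}$. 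Therefore $\mathcal W=u_1H^\alpha$ with $u_1\in\mathcal W\cap\mathcal M$ unitary, as desired.

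The main obstacle I anticipate is justifying the numerical identity $\tau(u_iu_i^*)=\tau(u_i^*u_i)$ leading to $|\mathcal I|\le 1$ rigorously: one must be careful that $\sum_i u_iu_i^*$ converges (as a projection $\le I$) and that the trace of an infinite sum of orthogonal projections is the sum of the traces — this is standard for a normal trace but must be invoked cleanly — and then that each $\tau(u_iu_i^*)=\tau(u_i^*u_i)=\tau(I)=1$ genuinely uses $u_i^*u_i\in\mathcal D=\mathbb CI$ being a nonzero (hence identity) projection. Equivalently, one could bypass traces entirely by the lattice-theoretic observation that orthogonal isometries cannot coexist in a finite von Neumann algebra (two isometries $u_1,u_2$ with $u_1^*u_2=0$ give $u_1u_1^*+u_2u_2^*\le I$ with $u_1u_1^*\sim I\sim u_2u_2^*$, impossible in a finite algebra unless one is absent), which is perhaps the more robust argument and the one I would actually write.
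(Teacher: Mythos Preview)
Your proposal is correct and follows the same strategy as the paper: apply Theorem \ref{mainthm}, use $\mathcal D=\mathbb CI$ to force each nonzero $u_i$ to satisfy $u_i^*u_i=I$, and invoke finiteness of $\mathcal M$ to collapse the decomposition. The paper's execution is a bit more direct than your trace-counting route---once some nonzero $u_i$ is an isometry, finiteness of $\mathcal M$ makes it a \emph{unitary}, so $u_iu_i^*=I$, and then the orthogonality relations $u_j^*u_i=0$ and $u_i^*\mathcal Z=0$ instantly force every other summand and $\mathcal Z$ to vanish---which is precisely the ``lattice-theoretic'' alternative you identify at the end.
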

\addtocounter{theorem}{-1}}

The organization of  the paper is as follows. In section 2, we
introduce a class $N_c(\mathcal M,\tau)$ of normalized, unitarily
invariant, $\|\cdot\|_1$-dominating and continuous norms and study
their dual norms on a finite von Neumann algebra $\mathcal M$ with a
faithful normal tracial state $\tau$. In section 3, we prove a
H\"{o}lder's inequality and use it to find the dual space of
$L^{\alpha}(\mathcal{M},\tau)$ when $\alpha\in N_c(\mathcal
M,\tau)$.  In Section 4, we define the non-commutative $H^{\alpha}$
spaces  and provide a characterization of $H^\alpha$. In section 5,
we prove the main result of the paper, a version of Beurling's
theorem for $H^\infty$-right invariant subspaces in
$L^{\alpha}(\mathcal M,\tau)$ spaces.

\section{Unitarily invariant norms and dual norms on finite von Neumann algebras}

\subsection{Unitarily invariant norms} Let $\mathcal M$ be a finite von Neumann algebra with a faithful
normal tracial state $\tau$. For general knowledge about
non-commutative $L^p$-spaces for $0< p\le \infty$ associated with
a von Neumann algebra $\mathcal M$, we will refer to a wonderful
handbook \cite{PX} by Pisier and Xu. For each $0< p<\infty$, we
let $\| \cdot\|_p$ be a mapping from $\mathcal M$ to $[0,\infty)$ (see
\cite{PX}) as defined by
$$
\|x\|_p=\left ( \tau(|x|^p)\right)^{1/p}, \qquad \forall \ x\in
\mathcal M.
$$
It is known that $\|\cdot \|_p$ is a norm   if $1\le p<\infty$, and a quasi-norm  if $0<p<1$.
We define $L^p(\mathcal M, \tau)$, so called non-commutative
$L^p$-space associated with $(\mathcal M,\tau)$, to be the
completion of $\mathcal M$ with respect to
$\|\cdot\|_p$ for $0< p<\infty$.

In the paper, we will mainly focus on the following two classes of
unitarily invariant norms of a finite von Neumann algebra.

\begin{definition}\label{def2.1}
We denote by $N(\mathcal M,\tau)$ the collection of all these norms
$\alpha :\mathcal M\rightarrow [0,\infty)$ satisfying:

\begin{enumerate}
\item [(a)] $\alpha(I)=1,$ i.e. $\alpha$ is normalized.

\item  [(b)]$\alpha(uxv)=\alpha(x)$ for all $x\in \mathcal{M} $ and  unitaries  $u,v$  in $\mathcal{M},$  i.e. $\alpha $ is unitarily invariant.

\item  [(c)]$ \Vert x\Vert_{1}\le \alpha(x)$ for every $x\in \mathcal{M},$
i.e. $\alpha$ is $\|\cdot \|_1$-dominating.
\end{enumerate}
A norm $\alpha$ in $N(\mathcal M,\tau)$ is called a
\emph{normalized, unitarily invariant, $\|\cdot \|_1$-dominating
norm} on $\mathcal M$.
\end{definition}
\begin{definition}\label{def2.2}
We denote by $N_c(\mathcal M,\tau)$ the collection of all these
norms $\alpha :\mathcal M\rightarrow [0,\infty)$  such that
\begin{enumerate}
\item [(a)] $\alpha\in N(\mathcal M,\tau)$ and
\item [(b)]
$\displaystyle \lim_{\tau(e)\rightarrow 0}\alpha(e)=0 $ as $e$
ranges over the projections in $\mathcal{M}$ ($\alpha$ is a
continuous norm with respect to a trace $\tau$).
\end{enumerate}
A norm $\alpha$ in $N_c(\mathcal M,\tau)$ is called a
\emph{normalized, unitarily invariant, $\|\cdot \|_1$-dominating,
continuous norm} on $\mathcal M$.
\end{definition}
\begin{example}
Each $p$-norm, $\|\cdot\|_p$, is in the class $N_c(\mathcal M,\tau)$
for $1\le p<\infty$.
\end{example}

\begin{example}
Let $\mathcal M$ be a finite von Neumann algebra with a faithful normal tracial state $\tau$ satisfying the weak Dixmier property  (See \cite{Fang}). Let $\alpha$ be a normalized tracial gauge norm on $\mathcal M$. Then Theorem 3.30 in \cite{Fang} shows that  $\alpha\in N(\mathcal M,\tau)$.
\end{example}

\begin{example}
Let $\mathcal M$ be a finite  von Neumann algebra with a faithful
normal tracial state $\tau$ and $E(0,1)$ be a rearrangement
invariant symmetric Banach function space on $(0,1)$. A
non-commutative Banach function space $E(\tau)$ together with a norm
$\|\cdot\|_{E(\tau)}$, corresponding to $E(0,1)$ and associated with
$(\mathcal M,\tau)$, can be introduced (see \cite{P} or \cite{P2} ).
Moreover $\mathcal M$ is a  subset in $E(\tau)$  and the restriction
of the norm $\|\cdot\|_{E(\tau)}$ to $\mathcal M$ lies in
$N({\mathcal M,\tau})$. If  $E $ is also order continuous, then the
restriction of the norm $\|\cdot\|_{E(\tau)}$ to $\mathcal M$ lies
in $N_c({\mathcal M,\tau})$.
\end{example}

\begin{example}
Let $\mathcal N$ be a type II$_1$ factor with a tracial state $\tau_{\mathcal N}$. Let $\|\cdot\|_{1,\mathcal N}$ and $\|\cdot\|_{2,\mathcal N}$ be $L^1$-norm, and $L^2$-norm respectively, on $\mathcal N$. Let $\mathcal M=\mathcal N\oplus \mathcal N$ be a finite von Neumann algebra with a faithful normal tracial state $\tau$, defined by
$$
\tau(x\oplus y)=\frac {\tau_{\mathcal N}(x)+\tau_{\mathcal N}(y)}2, \qquad \forall \ x\oplus y\in \mathcal M.
$$ Let $\alpha$ be a norm of $\mathcal M$, defined by
$$
\alpha(x\oplus y)=\frac {\|x\|_{1,\mathcal N} +\|y\|_{2,\mathcal N}}2, \qquad \forall \ x\oplus y\in \mathcal M.
$$
Then $\alpha\in N_c(\mathcal M,\tau)$.  But $\alpha$ is neither
tracial (see Definition 3.7 in \cite{Fang}) nor rearrangement
invariant (see Definition 2.1 in \cite{Do}).
\end{example}

The following lemma is well-known.
\begin{lemma}\label{lemma2.3}
Let $\mathcal M$ be a finite  von Neumann algebra with a faithful
normal tracial state $\tau$ and $\alpha$ be a norm on $\mathcal M$.
If $\alpha$ is unitarily invariant, i.e.
$$\text{$\alpha(uxv)=\alpha(x)$ for all $x\in \mathcal{M} $ and  unitaries  $u,v$  in $\mathcal{M}$,}$$
then
$$
\alpha(x_1yx_2)\le \|x_1\|\cdot \|x_2\|\cdot \alpha(y), \qquad
\forall \ x_1,x_2, y \in\mathcal M.
$$ In particular, if $\alpha$ is a normalized unitarily invariant norm on $\mathcal M$, then
 $$\alpha(x)\le \|x\|, \qquad \forall \ x\in\mathcal M.$$
\end{lemma}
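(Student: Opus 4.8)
The plan is to derive the inequality from the unitary-invariance hypothesis by representing arbitrary contractions as averages of unitaries. First I would reduce to the case $\|x_1\|\le 1$ and $\|x_2\|\le 1$: since $\alpha$ and the operator norm are absolutely homogeneous and $\alpha(0)=0$, the general inequality $\alpha(x_1yx_2)\le\|x_1\|\,\|x_2\|\,\alpha(y)$ follows from the special case $\alpha(x_1yx_2)\le\alpha(y)$ (valid when $\|x_1\|,\|x_2\|\le 1$) by replacing each nonzero $x_i$ with $x_i/\|x_i\|$.

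Next I would establish the key fact: every $a\in\mathcal M$ with $\|a\|\le 1$ can be written as $a=\tfrac12(u+v)$ for unitaries $u,v\in\mathcal M$. Take the polar decomposition $a=w|a|$, where $w\in\mathcal M$ is a partial isometry (so $w^*w\sim ww^*$ via $w$). Because $\mathcal M$ is finite, $1-w^*w\sim 1-ww^*$, hence $w$ extends to a unitary $u_0\in\mathcal M$ with $a=u_0|a|$. Since $0\le|a|\le 1$, the operators $|a|$ and $(1-|a|^2)^{1/2}$ are commuting positive contractions in $\mathcal M$, and $u_\pm:=|a|\pm i(1-|a|^2)^{1/2}$ are unitaries (one checks $u_+u_-=u_-u_+=|a|^2+(1-|a|^2)=I$); thus $|a|=\tfrac12(u_++u_-)$ and $a=\tfrac12(u_0u_++u_0u_-)$ is an average of two unitaries of $\mathcal M$.

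With the reduction and the key fact in hand, write $x_1=\tfrac12(u_1+u_2)$ and $x_2=\tfrac12(v_1+v_2)$ with $u_i,v_j$ unitary, expand $x_1yx_2=\tfrac14\sum_{i,j}u_iyv_j$, and apply the triangle inequality together with unitary invariance of $\alpha$:
$$\alpha(x_1yx_2)\le\frac14\sum_{i,j=1}^{2}\alpha(u_iyv_j)=\frac14\sum_{i,j=1}^{2}\alpha(y)=\alpha(y).$$
The ``in particular'' clause is then immediate: applying the proven inequality with $x_1=x$, $y=I$, $x_2=I$ gives $\alpha(x)=\alpha(xII)\le\|x\|\cdot\|I\|\cdot\alpha(I)=\|x\|$, using $\|I\|=\alpha(I)=1$.

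The step that needs genuine care is the factorization of a contraction as an average of two unitaries, which is exactly where finiteness of $\mathcal M$ is used --- through the extension of a partial isometry to a unitary, equivalently the cancellation law $p\sim q\Rightarrow 1-p\sim 1-q$ for projections in a finite von Neumann algebra. (This is genuinely needed: in $B(H)$ with $\dim H=\infty$ the unilateral shift is a contraction that is not an average of two unitaries.) The remaining steps are routine manipulations with homogeneity and the triangle inequality.
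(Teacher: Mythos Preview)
Your proof is correct and follows essentially the same approach as the paper: both arguments use the polar decomposition (with a unitary, via finiteness of $\mathcal M$) together with the identity $|a|=\tfrac12\bigl((|a|+i\sqrt{I-|a|^2})+(|a|-i\sqrt{I-|a|^2})\bigr)$ to write a contraction as an average of two unitaries, then apply the triangle inequality and unitary invariance. The only cosmetic difference is that the paper handles left and right multiplication separately (showing $\alpha(xy)\le\|x\|\alpha(y)$ and $\alpha(yx)\le\|x\|\alpha(y)$), whereas you expand both sides at once into four unitary terms.
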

\begin{proof}
Let $x\in \mathcal M$ such that $\|x\|=1.$ Assume that $x=v|x|$ is
the polar decomposition of $x$ in $\mathcal M$, where $v$ is a
unitary in $\mathcal M$ and $|x|$ in $\mathcal M$ is positive. Then
$u=|x|+i\sqrt {I-|x|^2}$ is a unitary in $\mathcal M$ such that
$|x|=(u+u^*)/2$. Thus $$\alpha(xy)=\alpha(|x|y)= \alpha(\frac
{uy+u^*y} 2)\le \frac {\alpha(uy)+\alpha(u^*y)} 2= \alpha(y).$$
Hence $\alpha(xy)\le \|x\|\alpha(y),  \forall \ x, y \in\mathcal M.$
Similarly, $\alpha( yx)\le \|x\|\alpha(y),  \forall \ x, y
\in\mathcal M.$

Furthermore, if   $\alpha$ is a normalized unitarily invariant norm on $\mathcal M$, then  from the discussion in the preceding paragraph  we have that
$$
\alpha(x)\le \|x\|\alpha(I)= \|x\|, \qquad  \forall \ x\in\mathcal M.
$$
\end{proof}

\subsection{Dual norms of unitarily invariant norms on $\mathcal{M}$.}

The concept of dual norm plays an important role in the study of
non-commutative $L^p$-spaces. In this subsection, we will introduce
dual norm for a unitarily invariant norm on a finite von Neumann
algebra.

\begin{lemma} \label{lemma2.6}
Let $\mathcal M$ be a finite von Neumann algebra with a faithful
normal tracial state $\tau$. Let $\alpha$ be a normalized, unitarily
invariant, $\|\cdot \|_1$-dominating norm on $\mathcal M$ (see
Definition \ref{def2.1}).  Define a mapping $\alpha':\mathcal
M\rightarrow [0,\infty]$ as follows:
\[
\alpha^{\prime}(x)=\sup \{|\tau(xy)|: y\in{\mathcal{M}},
\alpha(y)\leq1\}, \qquad \forall \ x\in \mathcal M.
\]
Then the following statements are true.
\begin{enumerate}
\item [(i)] $\forall \ x\in \mathcal M, \|x\|_1\le \alpha'(x) \le \|x\|.$
\item [(ii)] $\alpha'$ is a norm on $\mathcal M$.
\item [(iii)] $\alpha'\in N(\mathcal M, \tau)$, i.e. $\alpha'$ is a normalized, unitarily invariant, $\|\cdot \|_1$-dominating norm.
\item [(iv)] $|\tau(xy)|\le \alpha(x)\alpha'(y)$ for all $x,y$ in $\mathcal M$.
\end{enumerate}

\end{lemma}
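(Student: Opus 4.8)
The plan is to establish the four items essentially in the order listed, since each feeds the next. The central observation is that for $y\in\mathcal M$ with $\alpha(y)\le 1$ one also has $\|y\|_1\le\alpha(y)\le 1$, so the supremum defining $\alpha'(x)$ is taken over a set that sits inside the $\|\cdot\|_1$-unit ball of $\mathcal M$; conversely, by Lemma \ref{lemma2.3}, every $y$ with $\|y\|\le 1$ satisfies $\alpha(y)\le\|y\|\le 1$, so the admissible set for $\alpha'$ also contains the $\|\cdot\|$-unit ball of $\mathcal M$.

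For (i), the upper bound is immediate: if $\|y\|\le 1$ then $\alpha(y)\le 1$, so $\alpha'(x)\ge\sup\{|\tau(xy)|:\|y\|\le 1\}=\|x\|_1$ — wait, that gives the lower bound. Precisely: taking $y$ in the $\|\cdot\|$-unit ball shows $\alpha'(x)\ge\sup\{|\tau(xy)|:y\in\mathcal M,\|y\|\le 1\}$, and the latter supremum is exactly $\|x\|_1$ (the standard trace-duality identity $\|x\|_1=\sup\{|\tau(xy)|:\|y\|\le1\}$, valid in a finite von Neumann algebra); hence $\|x\|_1\le\alpha'(x)$. For the upper bound $\alpha'(x)\le\|x\|$, note that for any admissible $y$ we have $\alpha(y)\le 1$, hence $\|y\|_1\le 1$, so $|\tau(xy)|\le\|x\|\,\|y\|_1\le\|x\|$ by the standard estimate $|\tau(xy)|\le\|x\|\,\|y\|_1$; taking the sup gives $\alpha'(x)\le\|x\|<\infty$. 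This also shows $\alpha'$ is finite-valued, which is needed before one can even call it a norm.

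Items (ii) and (iii) are then routine. Subadditivity and absolute homogeneity of $\alpha'$ follow because $\alpha'$ is a pointwise supremum of the seminorms $x\mapsto|\tau(xy)|$; definiteness ($\alpha'(x)=0\Rightarrow x=0$) follows from (i) since $\|x\|_1\le\alpha'(x)$ and $\|\cdot\|_1$ is a norm on $\mathcal M$ (faithfulness of $\tau$). For (iii): normalization $\alpha'(I)=1$ uses $\|I\|_1=1\le\alpha'(I)\le\|I\|=1$; the $\|\cdot\|_1$-dominating property is exactly the lower bound in (i); unitary invariance of $\alpha'$ follows from unitary invariance of $\alpha$ together with the trace property $\tau(uxvy)=\tau(x(vyu))$ — given unitaries $u,v$, substituting $y'=vyu$ runs over the same $\alpha$-unit ball as $y$, so $\alpha'(uxv)=\sup\{|\tau(uxvy)|:\alpha(y)\le1\}=\sup\{|\tau(xy')|:\alpha(y')\le1\}=\alpha'(x)$.

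Finally, (iv) is the Hölder-type inequality and is where a little care is required: the definition of $\alpha'$ only directly controls $|\tau(xy)|$ when $\alpha(y)\le 1$, i.e. it gives $|\tau(xy)|\le\alpha'(x)\alpha(y)$ for all $x,y$ by homogeneity (scaling $y$ by $1/\alpha(y)$ when $\alpha(y)\ne0$, and handling $\alpha(y)=0$ separately via (i) applied to $\alpha'$, which forces $y=0$). But the claimed inequality is the symmetric statement $|\tau(xy)|\le\alpha(x)\alpha'(y)$, with the roles reversed. To get this, use $\tau(xy)=\tau(yx)$ and apply the previous bound with the pair $(y,x)$: $|\tau(xy)|=|\tau(yx)|\le\alpha'(y)\alpha(x)$. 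So the main (minor) obstacle is simply keeping track of which factor is being normalized and invoking commutativity of the trace at the right moment; there is no deep difficulty here, as the nontrivial input — the trace-duality formula $\|x\|_1=\sup\{|\tau(xy)|:\|y\|\le1\}$ and the estimate $|\tau(xy)|\le\|x\|\,\|y\|_1$ — is standard finite-von-Neumann-algebra machinery available from \cite{PX}.
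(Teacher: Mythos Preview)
Your proof is correct and follows essentially the same route as the paper. The only cosmetic difference is in the lower bound of (i): you invoke the trace-duality identity $\|x\|_1=\sup\{|\tau(xy)|:\|y\|\le1\}$ as a black box, whereas the paper carries it out explicitly by testing against $y=u^*$ from the polar decomposition $x=uh$; and in (iv) you are more careful than the paper in noting that the trace swap $\tau(xy)=\tau(yx)$ is what puts the $\alpha$ and $\alpha'$ on the correct factors.
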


\begin{proof}
(i) Suppose $x\in \mathcal{M}.$ If $y\in \mathcal{M}$ with
$\alpha(y)\leq1,$ then, from the fact that $\alpha$ is
$\|\cdot\|_1$-dominating, we have
\[
|\tau(xy)|\leq \|x\| \|y\|_{1}\leq \|x\| \alpha(y)\leq \|x\|,
\]
whence $\alpha^{\prime}(x)\leq \|x\|.$ Thus $\alpha'$ is a mapping
from $\mathcal M$ to $[0,\infty)$.

Now, assume that $x=uh$ is the polar decomposition of $x$ in
$\mathcal M$, where $u$ is a unitary element in $\mathcal M$ and $h$
in $\mathcal M$ is positive. Then, from the fact that $\alpha(u^*)=1$,
we have
$$\alpha^{\prime}(x)\geq |\tau(u^{*}x)|=  \tau(h)=\|x\|_{1}.$$ Therefore $\|x\|_{1}\leq \alpha^{\prime}%
(x)$ for every $x\in \mathcal{M}.$  This ends the proof of part (i).

 (ii) It is easy to verify that
$$
\alpha^\prime(ax)=|a|\alpha'(x), \ \text { and } \ \alpha'(x_1+x_2)\le \alpha'(x_1)+\alpha'(x_2), \qquad \forall a\in\mathbb C, \forall \ x,
x_1,x_2\in \mathcal M.
$$
From the result (i), we know that $\alpha'(x)=0$ implies $x=0$.
Therefore  $\alpha'$ is a norm on $\mathcal M$.

(iii) It is not hard to verify that $\alpha'$ satisfies conditions
(a) and (b) in the definition of $N(\mathcal M,\tau)$. From the
result (i), $\alpha'$ also satisfies condition  (c)  in the
definition of $N(\mathcal M,\tau)$. Therefore $\alpha'\in N(\mathcal
M,\tau)$.

(iv) It follows directly from the definition of $\alpha'$.
\end{proof}

\begin{definition} \label{def2.8}
The  $\alpha'$, as defined in Lemma \ref{lemma2.6},   is called
the \emph{dual norm} of $\alpha$ on $\mathcal M$.
\end{definition}
Now we are ready to introduce $L^\alpha$-space and
$L^{\alpha'}$-space for a finite von Neumann algebra $\mathcal M$
with respect to the unitarily invariant norms $\alpha$, and
$\alpha'$ respectively, as follows.
\begin{definition}Let $\mathcal M$ be a finite von Neumann algebra with a faithful
normal tracial state $\tau$. Let $\alpha$ be a normalized, unitarily
invariant, $\|\cdot \|_1$-dominating norm on $\mathcal M$ (see
Definition \ref{def2.1}). Let $\alpha'$ be the dual norm of $\alpha$
on $\mathcal M$ (see Definition \ref{def2.8}).
  We
define $L^\alpha(\mathcal M, \tau)$ and $L^{\alpha'}(\mathcal M,
\tau)$   to be the completion of $\mathcal M$ with respect to
  $\alpha$, and $\alpha'$ respectively.
\end{definition}

\begin{remark}
If $\alpha$ is an $L^p$-norm  for some $1<p<\infty$, then $\alpha'$
is nothing but an $L^q$-norm  where $1/p+1/q=1$. Hence
$L^\alpha(\mathcal M, \tau)$, $L^{\alpha'}(\mathcal M, \tau)$ are
the usual $L^p(\mathcal M, \tau)$,  $L^q(\mathcal M, \tau)$.

It is known that the dual space of $L^p(\mathcal M, \tau)$  is
$L^q(\mathcal M, \tau)$  when $1<p,q<\infty$ and  $1/p+1/q=1$. However generally, for
  $\alpha\in N(\mathcal M,\tau)$, the dual of $L^\alpha(\mathcal
M, \tau)$ might not be $L^{\alpha'}(\mathcal M, \tau)$. 
\end{remark}

\section{Dual spaces of  $L^{\alpha}$-spaces associated with finite von Neumann algebras}

In this section we will study dual space of
$L^{\alpha}({\mathcal{M}},\tau)$ by investigating some subspaces in $L^1(\mathcal M,\tau)$.

\subsection{Definitions of subspaces $L_{\overline {\alpha}}(\mathcal M,\tau) $ and $L_{\overline {\alpha'}}(\mathcal M,\tau) $ of $L^1(\mathcal M,\tau)$}

\begin{definition}\label{def2.10}
Let $\mathcal M$ be a finite von Neumann algebra with a faithful
normal tracial state $\tau$. Let $\alpha$ be a normalized, unitarily
invariant, $\|\cdot \|_1$-dominating norm on $\mathcal M$ (see
Definition \ref{def2.1}). Let $\alpha'$ be the dual norm of $\alpha$
on $\mathcal M$ (see Definition \ref{def2.8}).

We define   $$\overline {\alpha}: L^1(\mathcal M, \tau)\rightarrow [0,\infty]  \quad \text { and } \quad \overline {\alpha'}: L^1(\mathcal M, \tau)\rightarrow [0,\infty]$$
as follows:
$$
\begin{aligned}
\overline {\alpha}(x)&=\sup \{|\tau(xy)|: y\in{\mathcal{M}}, \alpha^{\prime}(y)\leq1\}, \qquad \text{$\forall \ x\in   L^1(\mathcal M, \tau)$, }
 \\
\overline {\alpha'}(x)&=\sup \{|\tau(xy)|: y\in{\mathcal{M}}, \alpha(y)\leq1\},   \qquad \text{$\forall \ x\in   L^1(\mathcal M, \tau)$. }
\end{aligned}
$$
We define
$$
\begin{aligned}
L_{\overline {\alpha}}(\mathcal M,\tau) &=  \{  x\in   L^1(\mathcal M, \tau): \overline {\alpha}(x)<\infty \} \subseteq L^1(\mathcal M,\tau)
 \\
L_{\overline {\alpha'}}(\mathcal M,\tau) &=  \{  x\in   L^1(\mathcal M, \tau): \overline {\alpha'}(x)<\infty \} \subseteq L^1(\mathcal M,\tau).
\end{aligned}
$$
\end{definition}

Thus  $\overline {\alpha}$ and $\overline {\alpha'}$ are mappings from $L_{\overline {\alpha}}(\mathcal M,\tau)$, and $L_{\overline {\alpha'}}(\mathcal M,\tau)$ respectively, into $[0,\infty)$.
The next result follows directly from the definitions of $\overline {\alpha}$, $\overline {\alpha'}$ and part (iv) of Lemma \ref{lemma2.6}.
 \begin{lemma} \label{lemma3.2} We have
 \[
\overline {\alpha'}(x)=\alpha^{\prime}(x)  \  \  \text{ and }  \ \ \overline {\alpha}(x) \leq \alpha(x) \ \ \ \mbox{\ for\  every \ } x\in \mathcal{M}.
\]
 \end{lemma}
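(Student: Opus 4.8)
The plan is to unwind Definition \ref{def2.10} and compare the resulting expressions with the definition of $\alpha'$ in Lemma \ref{lemma2.6}; both assertions then drop out. The one preliminary point I would make explicit is that the bilinear pairing $L^{1}(\mathcal M,\tau)\times\mathcal M\to\mathbb C$, $(x,y)\mapsto\tau(xy)$, agrees on $\mathcal M\times\mathcal M$ with the map $(x,y)\mapsto\tau(xy)$ used to define $\alpha'$: indeed $xy\in\mathcal M$ whenever $x,y\in\mathcal M$, and the bound $|\tau(xy)|\le\|x\|_{1}\|y\|$ shows the $L^{1}$-pairing is the unique $\|\cdot\|_{1}$-continuous extension of the trace pairing on $\mathcal M$, so no ambiguity arises in reading $\overline{\alpha}$, $\overline{\alpha'}$ on elements of $\mathcal M$.

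Granting this, the identity $\overline{\alpha'}(x)=\alpha'(x)$ for $x\in\mathcal M$ is immediate, since by Definition \ref{def2.10} we have $\overline{\alpha'}(x)=\sup\{|\tau(xy)|:y\in\mathcal M,\ \alpha(y)\le1\}$, which is word-for-word the formula defining $\alpha'(x)$ in Lemma \ref{lemma2.6}. For the inequality, fix $x\in\mathcal M$; for every $y\in\mathcal M$ with $\alpha'(y)\le1$, part (iv) of Lemma \ref{lemma2.6} gives $|\tau(xy)|\le\alpha(x)\alpha'(y)\le\alpha(x)$, and taking the supremum over all such $y$ yields $\overline{\alpha}(x)\le\alpha(x)$.

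There is no genuine obstacle here: the only substantive ingredient is that part (iv) of Lemma \ref{lemma2.6} is precisely the H\"older-type estimate that the inequality needs. I would, however, emphasize that one should not expect equality in the second relation for a general $\alpha\in N(\mathcal M,\tau)$ — the possible strictness of $\overline{\alpha}\le\alpha$ on $\mathcal M$ (equivalently, the failure of $\alpha$ to coincide with its second dual norm) is exactly what later forces the analysis to be carried out on the subspace $L_{\overline{\alpha}}(\mathcal M,\tau)$ of $L^{1}(\mathcal M,\tau)$ rather than on a naively formed bidual.
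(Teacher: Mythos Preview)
Your argument is correct and mirrors the paper's own justification: the paper simply notes that the lemma ``follows directly from the definitions of $\overline{\alpha}$, $\overline{\alpha'}$ and part (iv) of Lemma \ref{lemma2.6},'' which is exactly what you unpack. Your additional remarks about the pairing and the possible strictness of $\overline{\alpha}\le\alpha$ are accurate side comments but not needed for the proof itself.
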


The following proposition describes   properties of $\overline {\alpha}$ and $\overline {\alpha'}$, which imply that $L_{\overline {\alpha}}(\mathcal M,\tau) $ and $L_{\overline {\alpha'}}(\mathcal M,\tau) $ are normed spaces with respect to $\overline {\alpha}$ and $\overline {\alpha'}$ respectively.

\begin{proposition}
\label{property of beta} Let $\mathcal M$ be a finite von Neumann
algebra with a faithful normal tracial state $\tau$. Let $\alpha$ be
a normalized, unitarily invariant, $\|\cdot \|_1$-dominating norm on
$\mathcal M$ (see Definition \ref{def2.1}). Let $\alpha'$ be the
dual norm of $\alpha$ on $\mathcal M$ (see Definition \ref{def2.8}).
Let $$\overline {\alpha}: L_{\overline {\alpha}}(\mathcal M,\tau)
\rightarrow [0,\infty)
  \quad \text { and } \quad \overline {\alpha'}: L_{\overline {\alpha'}}(\mathcal M,\tau)\rightarrow [0,\infty)$$be as in Definition \ref{def2.10}.

  Then
the following statements are true:

\begin{enumerate}
\item [(i)] $\overline {\alpha}(I)=1$ and $\overline {\alpha'}(I)=1.$
\item  [(ii)] If  $u,v$ are unitary elements in $\mathcal M$, then
$$
\overline {\alpha}(x)=\overline {\alpha}(uxv), \qquad \forall \ x\in    L_{\overline {\alpha }}(\mathcal M,\tau) $${ and }$$ \overline {\alpha'}(x)=\overline {\alpha'}(uxv),  \qquad \forall \ x\in  L_{\overline {\alpha'}}(\mathcal M,\tau).
$$
\item  [(iii$_1$)] We have
$$
\|x\|_1\le \overline {\alpha}(x),  \qquad \forall \ x\in    L_{\overline {\alpha }}(\mathcal M,\tau) $${ and }$$ \|x\|_1\le\overline {\alpha'}(x),  \qquad \forall \ x\in  L_{\overline {\alpha'}}(\mathcal M,\tau).
$$
\item [(iii$_2$)]
If $x$ is an element in $\mathcal M$, then
$$
  \overline {\alpha}(x)\le \|x\|  \qquad \text{ and } \qquad    \overline {\alpha'}(x) \le \|x\| .
$$

 \item [(iv)] $ \overline {\alpha}$ and $ \overline {\alpha'}$ are norms on $L_{\overline {\alpha }}(\mathcal M,\tau) $, and $L_{\overline {\alpha' }}(\mathcal M,\tau) $ respectively.
\end{enumerate}
\end{proposition}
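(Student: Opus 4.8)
The plan is to verify the five assertions in turn, treating $\overline{\alpha}$ and $\overline{\alpha'}$ symmetrically since both are defined as suprema of $|\tau(x\cdot)|$ over a unit ball of a norm in $N(\mathcal M,\tau)$ — namely $\alpha'$ and $\alpha$ respectively, both of which lie in $N(\mathcal M,\tau)$ by Lemma~\ref{lemma2.6}(iii). So it suffices to carry out the argument for $\overline{\alpha}$ with a general norm $\beta\in N(\mathcal M,\tau)$ in place of $\alpha'$, and then specialize.

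First I would handle (i) and (iii$_2$). For (iii$_2$), if $x\in\mathcal M$ and $y\in\mathcal M$ with $\beta(y)\le 1$, then since $\beta$ is $\|\cdot\|_1$-dominating we get $\|y\|_1\le\beta(y)\le 1$, so $|\tau(xy)|\le\|x\|\,\|y\|_1\le\|x\|$; taking the supremum gives $\overline{\alpha}(x)\le\|x\|$ (and likewise $\overline{\alpha'}(x)\le\|x\|$). For (i): applying (iii$_2$) with $x=I$ gives $\overline{\alpha}(I)\le 1$; for the reverse, choose $y=I$, which satisfies $\beta(I)=1$ since $\beta$ is normalized, so $\overline{\alpha}(I)\ge|\tau(I)|=1$. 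Hence $\overline{\alpha}(I)=1$, and the same for $\overline{\alpha'}$.

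Next, (ii) is a change-of-variables argument. Given unitaries $u,v\in\mathcal M$ and $x\in L_{\overline\alpha}(\mathcal M,\tau)$, for any $y\in\mathcal M$ with $\beta(y)\le 1$ we have by the trace property $\tau((uxv)y)=\tau(x(vyu))$, and $\beta(vyu)=\beta(y)\le 1$ because $\beta$ is unitarily invariant; conversely every $y'$ with $\beta(y')\le1$ arises as $vyu$ with $y=v^*y'u^*$. Thus the two suprema agree, giving $\overline\alpha(uxv)=\overline\alpha(x)$; same for $\overline{\alpha'}$. For (iii$_1$), I would mimic the polar-decomposition trick from the proof of Lemma~\ref{lemma2.6}(i): write $x=uh$ with $u\in\mathcal M$ unitary and $h\ge0$ in $L^1(\mathcal M,\tau)$ (polar decomposition in $L^1$); then $u^*\in\mathcal M$ with $\beta(u^*)=1$, and $\overline\alpha(x)\ge|\tau(u^*x)|=\tau(h)=\|x\|_1$. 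One subtlety here is that $u^*x=h$ is an $L^1$ product and the pairing $\tau(u^*x)$ must be interpreted via the bounded extension of $y\mapsto\tau(zy)$ to $L^1$; since $\overline\alpha$ is \emph{defined} by testing against $y\in\mathcal M$, I would instead test against $y=u^*$ directly, noting $\tau(xu^*)=\tau(u^*x)=\|x\|_1$, so no extension issue arises. Finally (iv): absolute homogeneity $\overline\alpha(\lambda x)=|\lambda|\overline\alpha(x)$ and the triangle inequality $\overline\alpha(x_1+x_2)\le\overline\alpha(x_1)+\overline\alpha(x_2)$ follow immediately from the supremum definition and linearity of $\tau$, exactly as in Lemma~\ref{lemma2.6}(ii); positive-definiteness is (iii$_1$), which gives $\overline\alpha(x)\ge\|x\|_1>0$ for $x\ne0$ since $\tau$ is faithful.

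I do not anticipate a genuine obstacle here — the result is a routine transfer of the arguments already carried out for $\alpha'$ in Lemma~\ref{lemma2.6} to the $L^1$-completion setting. The one point requiring a little care is well-definedness: one must confirm that for $x\in L_{\overline\alpha}(\mathcal M,\tau)\subseteq L^1(\mathcal M,\tau)$ the quantity $|\tau(xy)|$ makes sense for $y\in\mathcal M$, which it does because $L^1(\mathcal M,\tau)\times\mathcal M\to\mathbb C$, $(x,y)\mapsto\tau(xy)$, is the well-defined trace pairing (bounded by $\|x\|_1\|y\|$), and that $\overline\alpha$ restricted to this subspace is finite-valued by the very definition of $L_{\overline\alpha}(\mathcal M,\tau)$. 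With that in place, (i)--(iv) are each a one- or two-line verification.
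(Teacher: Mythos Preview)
Your proposal is correct and follows essentially the same approach as the paper: the same change-of-variables for (ii), the same polar-decomposition trick for (iii$_1$), the same $|\tau(xy)|\le\|x\|\,\|y\|_1\le\|x\|\beta(y)$ estimate for (iii$_2$), and the same appeal to (iii$_1$) for definiteness in (iv). The only cosmetic differences are that you package both cases via a generic $\beta\in N(\mathcal M,\tau)$, and in (iii$_1$) you test against $y=u^*$ directly whereas the paper first invokes (ii) to reduce to $\overline{\alpha}(|x|)\ge|\tau(|x|)|$; these are equivalent one-line maneuvers.
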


\begin{proof}
(i) Note that $\alpha \in N(\mathcal M,
\tau)$ and $\alpha'\in N(\mathcal M,
\tau)$ from part (iii) of Lemma \ref{lemma2.6} . Thus
$$
\overline {\alpha}(I)=\sup \{|\tau(y)|: y\in{\mathcal{M}}, \alpha^{\prime}(y)\leq1\} =\sup \{||y||_1: y\in{\mathcal{M}}, \alpha^{\prime}(y)\leq1\}=1.
$$Similarly,
$$
\overline {\alpha'}(I)=1.
$$

(ii)  If $u,v$ are unitaries
in $\mathcal{M},$ then
\begin{align}
\overline {\alpha}(uxv) & =\sup \{|\tau(uxvy)|: y\in{\mathcal{M}}, \alpha^{\prime}%
(y)\leq1\} \tag{by Definition \ref{def2.10}}\\
& =\sup \{|\tau(xvyu)|: y\in{\mathcal{M}}, \alpha^{\prime}(y)\leq1\}\notag \\
& =\sup \{|\tau(xy_{0})|: y\in{\mathcal{M}}, \alpha^{\prime}(y_{0}%
)=\alpha^{\prime}(vyu)=\alpha^{\prime}(y)\leq1\} \tag{because $\alpha'\in N(\mathcal M, \tau)$} \\
& =\overline {\alpha}(x),   \text{ $\forall x\in L_{\overline {\alpha}}(\mathcal M,\tau).$} \notag
\end{align}
Similarly, we have
$$ \overline {\alpha'}(x)=\overline {\alpha'}(uxv),  \qquad \forall \ x\in  L_{\overline {\alpha'}}(\mathcal M,\tau).
$$

(iii$_1$) Assume that $x\in  L_{\overline {\alpha }}(\mathcal
M,\tau)\subseteq L^1(\mathcal M,\tau)$. We  let $x=uh$ be the polar
decomposition of $x$ in $L^{1}(\mathcal{M}),$ where $u$ is a unitary
in $\mathcal{M}$ and $h=|x|\in L^{1}(\mathcal{M}).$ Then, from the
result (ii), we obtain that
\[
\overline {\alpha }(x)=\overline {\alpha }(uh)=\overline {\alpha }(h)\geq|\tau(h)|=\|x\|_{1},
\]
Similarly, we have
$$ \|x\|_1\le\overline {\alpha'}(x),  \qquad \forall \ x\in  L_{\overline {\alpha'}}(\mathcal M,\tau).
$$

(iii$_2$)  Note that  $\alpha'\in N(\mathcal M,
\tau)$.
Suppose $x\in \mathcal{M}.$ If $y\in \mathcal{M}$ with $\alpha^{\prime}%
(y)\leq1,$ then
\[
|\tau(xy)|\leq \|x\| \|y\|_{1}\leq \|x\| \alpha^{\prime}(y)\leq \|x\|.
\]
Now it follows from the definition of $\overline{\alpha}$ that $\overline{\alpha}(x)\leq \|x\|$. Similarly, we have  $\overline{\alpha'}(x) \le \|x\|$,
$\forall x\in \mathcal{M}.$

(iv) From the definition and the result (iii$_1$), we conclude that  $ \overline {\alpha}$ and $ \overline {\alpha'}$ are norms on $L_{\overline {\alpha }}(\mathcal M,\tau) $, and $L_{\overline {\alpha' }}(\mathcal M,\tau) $ respectively.

\end{proof}
The following lemma is a useful tool for our later results.
\begin{lemma}
\label{property of beta2} Let $\mathcal M$ be a finite von Neumann
algebra with a faithful normal tracial state $\tau$. Let $\alpha$ be
a normalized, unitarily invariant, $\|\cdot \|_1$-dominating norm on
$\mathcal M$ (see Definition \ref{def2.1}). Let $\alpha'$ be the
dual norm of $\alpha$ on $\mathcal M$ (see Definition \ref{def2.8}).
Let $ \overline {\alpha} $ and $\overline {\alpha'} $ be as in
Definition \ref{def2.10}. Then the following statements are true.

\begin{enumerate}
\item [(i)] For all $a\in \mathcal{M}$ and $x\in L_{\overline {\alpha}}(\mathcal M,\tau),$
$\overline {\alpha}(ax)\leq \|a\| \overline {\alpha}(x).$

\item [(ii)]  For all $a\in \mathcal{M}$ and $x\in L_{\overline {\alpha'}}(\mathcal M,\tau),$
$\overline {\alpha'}(ax)\leq \|a\|\overline {\alpha'}(x).$
\end{enumerate}
\end{lemma}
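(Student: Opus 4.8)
The plan is to imitate, almost verbatim, the argument in the proof of Lemma \ref{lemma2.3}, replacing the unitary invariance of $\alpha$ by the unitary invariance of $\overline{\alpha}$ (and $\overline{\alpha'}$) established in part (ii) of Proposition \ref{property of beta}, and the triangle inequality for $\alpha$ by part (iv) of that proposition. Since the two statements (i) and (ii) are proved the same way, I will describe only (i); (ii) is obtained by replacing $\overline{\alpha}$ with $\overline{\alpha'}$ throughout.

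First I would reduce to the case $\|a\|\le 1$: the case $a=0$ is trivial, and for $a\neq 0$ one has $\overline{\alpha}(ax)=\|a\|\,\overline{\alpha}\!\left(\tfrac{a}{\|a\|}x\right)$, so it suffices to show $\overline{\alpha}(ax)\le\overline{\alpha}(x)$ whenever $\|a\|\le 1$. Next, write the polar decomposition $a=w|a|$ in $\mathcal M$; since $\mathcal M$ is finite, the partial isometry in the polar decomposition can be extended to a unitary $w\in\mathcal M$ (the left and right support projections of $a$ are equivalent, hence so are their complements), so we may take $w$ to be a unitary with $w|a|=a$. Because $\|a\|\le 1$, the element $u=|a|+i\sqrt{I-|a|^{2}}$ is a unitary in $\mathcal M$ with $|a|=(u+u^{*})/2$, so that $a=\tfrac12(wu+wu^{*})$ is the average of the two unitaries $wu$ and $wu^{*}$ of $\mathcal M$.

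Then, given $x\in L_{\overline{\alpha}}(\mathcal M,\tau)$, I would note that $ax=\tfrac12\bigl(wu\,x+wu^{*}\,x\bigr)$, that $wu\,x$ and $wu^{*}\,x$ again lie in $L_{\overline{\alpha}}(\mathcal M,\tau)$ by Proposition \ref{property of beta}(ii), and hence, using the triangle inequality (Proposition \ref{property of beta}(iv)) together with the unitary invariance (Proposition \ref{property of beta}(ii)),
\[
\overline{\alpha}(ax)\le\tfrac12\overline{\alpha}(wu\,x)+\tfrac12\overline{\alpha}(wu^{*}\,x)=\tfrac12\overline{\alpha}(x)+\tfrac12\overline{\alpha}(x)=\overline{\alpha}(x).
\]
Scaling back gives $\overline{\alpha}(ax)\le\|a\|\,\overline{\alpha}(x)$ for all $a\in\mathcal M$, which in particular also records that left multiplication by $a$ maps $L_{\overline{\alpha}}(\mathcal M,\tau)$ into itself (the product $ax$ already lies in $L^{1}(\mathcal M,\tau)$). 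Repeating the identical computation with $\overline{\alpha'}$ in place of $\overline{\alpha}$ yields (ii).

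I do not anticipate a genuine obstacle here; the only point requiring a word of justification is the extension of the polar-decomposition partial isometry of $a$ to a unitary of $\mathcal M$, which is exactly where finiteness of $\mathcal M$ is used (and which is already implicitly used in the proof of Lemma \ref{lemma2.3}). Everything else is a direct application of the averaging-over-unitaries trick and the properties of $\overline{\alpha}$, $\overline{\alpha'}$ proved in Proposition \ref{property of beta}.
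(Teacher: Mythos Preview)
Your proposal is correct and follows exactly the approach the paper takes: the paper's proof simply notes that $\overline{\alpha}$ is a unitarily invariant norm on $L_{\overline{\alpha}}(\mathcal M,\tau)$ by Proposition \ref{property of beta} and then says ``the proof of Lemma \ref{lemma2.3} can also be applied here,'' which is precisely the averaging-over-unitaries argument you wrote out in detail.
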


\begin{proof}
(i) From Proposition \ref{property of beta},  $\overline{\alpha}$ is a norm on  $L_{\overline {\alpha}}(\mathcal M,\tau)$ satisfying
$$
\overline {\alpha}(x)=\overline {\alpha}(uxv), \qquad \forall \text {     unitary elements $u,v \in \mathcal M$ and }   \ x\in    L_{\overline {\alpha }}(\mathcal M,\tau)  .
$$Now the   proof of Lemma \ref{lemma2.3} can also be applied here.


(ii)  Similar  result holds for $\overline {\alpha'}.$
\end{proof}

Our next result shows that $L_{\overline {\alpha}}(\mathcal M,\tau) $ and $L_{\overline {\alpha'}}(\mathcal M,\tau) $ are Banach spaces with respect to $\overline {\alpha}$ and $\overline {\alpha'}$ respectively.
\begin{proposition}
 Let $\mathcal M$ be a finite von Neumann algebra with a faithful
normal tracial state $\tau$. Let $\alpha$ be a normalized, unitarily
invariant, $\|\cdot \|_1$-dominating  norm on $\mathcal M$ (see
Definition \ref{def2.1}). Let $\alpha'$ be the dual norm of $\alpha$
on $\mathcal M$ (see Definition \ref{def2.8}).   Let $ \overline
{\alpha} $, $\overline {\alpha'} $, $L_{\overline {\alpha}}(\mathcal
M,\tau)$, and $L_{\overline {\alpha'}}(\mathcal M,\tau)$ be as in
Definition \ref{def2.10}.

Then  $L_{\overline {\alpha}}(\mathcal M,\tau)$ and $L_{\overline {\alpha'}}(\mathcal M,\tau)$ are both Banach spaces with respect to norms $\overline{\alpha}$ and $\overline{\alpha'}$ respectively.
\end{proposition}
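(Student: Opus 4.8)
The plan is to realize $L_{\overline{\alpha}}(\mathcal{M},\tau)$ as a subspace of the complete space $L^1(\mathcal{M},\tau)$ which is ``closed under limits of its own Cauchy sequences,'' and then invoke the standard fact that such a normed space is itself a Banach space. I will carry this out for $\overline{\alpha}$; the argument for $\overline{\alpha'}$ is verbatim the same with the roles of $\alpha$ and $\alpha'$ interchanged, since it uses only the defining supremum formula in Definition~\ref{def2.10} and the inequality $\|\cdot\|_1\le\overline{\alpha'}(\cdot)$ from Proposition~\ref{property of beta} (and never any relation between $\alpha$ and $(\alpha')'$).

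First I would recall from Proposition~\ref{property of beta} that $\|x\|_1\le\overline{\alpha}(x)$ for every $x\in L_{\overline{\alpha}}(\mathcal{M},\tau)$, and from Definition~\ref{def2.10} and Proposition~\ref{property of beta} that $L_{\overline{\alpha}}(\mathcal{M},\tau)$ is a linear subspace of $L^1(\mathcal{M},\tau)$ on which $\overline{\alpha}$ is a norm. Now let $(x_n)$ be an $\overline{\alpha}$-Cauchy sequence in $L_{\overline{\alpha}}(\mathcal{M},\tau)$. By the above inequality it is $\|\cdot\|_1$-Cauchy, and since $L^1(\mathcal{M},\tau)$ is complete there is $x\in L^1(\mathcal{M},\tau)$ with $\|x_n-x\|_1\to 0$. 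It remains to show that $x\in L_{\overline{\alpha}}(\mathcal{M},\tau)$ and that $\overline{\alpha}(x_n-x)\to 0$.

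The key tool is that for a fixed $y\in\mathcal{M}$ the linear functional $\xi\mapsto\tau(\xi y)$ is $\|\cdot\|_1$-continuous on $L^1(\mathcal{M},\tau)$, because $|\tau(\xi y)|\le\|y\|\,\|\xi\|_1$ (the elementary $L^1$--$\mathcal{M}$ Hölder inequality). Fix $\varepsilon>0$ and choose $N$ with $\overline{\alpha}(x_n-x_m)\le\varepsilon$ for all $n,m\ge N$. For any $y\in\mathcal{M}$ with $\alpha'(y)\le1$ and any $n\ge N$ we have $|\tau((x_n-x_m)y)|\le\overline{\alpha}(x_n-x_m)\le\varepsilon$ for all $m\ge N$; letting $m\to\infty$ and using the continuity just noted gives $|\tau((x_n-x)y)|\le\varepsilon$. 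Taking the supremum over all such $y$ yields $\overline{\alpha}(x_n-x)\le\varepsilon$ for all $n\ge N$. In particular $\overline{\alpha}(x_N-x)\le\varepsilon<\infty$, so $x=x_N-(x_N-x)\in L_{\overline{\alpha}}(\mathcal{M},\tau)$, and the same bound says precisely that $x_n\to x$ in the norm $\overline{\alpha}$. This establishes completeness.

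There is no serious obstacle here: the only point requiring a little care is the passage to the limit in $m$ inside the pairing $\tau(\,\cdot\,y)$, which is exactly why one wants the inequality $|\tau(\xi y)|\le\|y\|\,\|\xi\|_1$ (equivalently, that convergence in $\|\cdot\|_1$ implies convergence of $\tau(\,\cdot\,y)$ for each $y\in\mathcal{M}$); once this is in hand, the rest is the routine argument that a normed subspace of a complete space which contains the $L^1$-limit of each of its $\overline{\alpha}$-Cauchy sequences, with $\overline{\alpha}$-control inherited through the supremum formula, is a Banach space. The identical reasoning applies to $\overline{\alpha'}$ on $L_{\overline{\alpha'}}(\mathcal{M},\tau)$.
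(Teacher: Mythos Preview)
Your proof is correct and follows essentially the same approach as the paper's: both use the inequality $\|\cdot\|_1\le\overline{\alpha}(\cdot)$ to obtain an $L^1$-limit, then pass to the limit inside the pairing $\tau(\cdot\,y)$ via $|\tau(\xi y)|\le\|y\|\,\|\xi\|_1$ and take the supremum over the unit ball of $\alpha'$. The only cosmetic difference is that the paper first bounds $\overline{\alpha}(x_0)$ by a uniform bound $M$ on $\overline{\alpha}(x_n)$ and then separately shows $\overline{\alpha}(x_n-x_0)\to 0$, whereas you handle both at once by working directly with $x_n-x$; your organization is slightly cleaner but the substance is identical.
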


\begin{proof} Since arguments for   $L_{\overline {\alpha}}(\mathcal M,\tau)$ and    for $L_{\overline {\alpha'}}(\mathcal M,\tau)$  are similar, we will only present the proof that $L_{\overline {\alpha}}(\mathcal M,\tau)$ is a  Banach space here.

From part (iv) of Proposition \ref{property of beta}, we know that  $L_{\overline {\alpha}}(\mathcal M,\tau)$ is a  normed space with respect to  $\overline{\alpha}$.  To prove the completeness of the space, we suppose $\{x_{n}\}$ is a Cauchy
sequence in  $L_{\overline {\alpha}}(\mathcal M,\tau)$ with respect to  $\overline{\alpha}$. Then there is an $M>0$ such
that $\overline {\alpha}(x_{n})\leq M$ for all $n.$ From part (iii$_1$) of Proposition \ref{property of beta}, we have that  $\|x_m-x_{n}\|_{1}\leq \overline {\alpha}(x_m-x_{n}) $ for $m,n\ge 1$.
It follows that $\{x_{n}\}$ is a Cauchy sequence in $L^{1}(\mathcal{M},\tau),$
which is a complete Banach space. Then there is an $x_{0}\in L^{1}(\mathcal{M},\tau)$ such that
$\|x_{n}-x_{0}\|_{1}\rightarrow0.$

We claim that $x_0\in L_{\overline {\alpha}}(\mathcal M,\tau)$ and $ \overline {\alpha}(x_n-x_0)\rightarrow 0$ as $n$ goes to infinity. In fact, we let $y\in \mathcal{M }$ with $  {\alpha'}(y)\leq1.$ Since
\[
|\tau(x_{n}y)-\tau(x_{0}y)|=|\tau((x_{n}-x_{0})y)|\leq \|x_{n}-x_{0}%
\|_{1}\|y\| \rightarrow0,
\]
we have
\[
|\tau(x_{0}y)|=\lim_{n\rightarrow \infty}|\tau(x_{n}y)|.\] By the   definition of $\overline {\alpha}$, we have that \[|\tau(x_{0}y)|=\lim_{n\rightarrow \infty}|
\tau(x_{n}y)|\leq \limsup_{n\rightarrow
\infty}\overline {\alpha}(x_{n})\alpha^{\prime}(y)\leq M,
\]
whence $\overline {\alpha}(x_{0})\leq M.$ This implies $x_{0}\in L_{\overline {\alpha}}(\mathcal M,\tau)$.
Furthermore, since $\{x_{n}\}$ is Cauchy in $L_{\overline {\alpha}}(\mathcal M,\tau)$, it follows that, for each $n\ge 1$,
\begin{align*}
|\tau((x_{0}-x_{n})y)| & =\lim_{m\rightarrow \infty}|\tau((x_{m}-x_{n})y)|\\
& \leq \limsup_{m\rightarrow \infty}\overline {\alpha}(x_{m}-x_{n})\alpha^{\prime}(y)\\
& \leq \limsup_{m\rightarrow \infty}\overline {\alpha}(x_{m}-x_{n}).
\end{align*}
Thus $\overline {\alpha}(x_{n}-x_{0})\le \limsup_{m\rightarrow \infty}\overline {\alpha}(x_{m}-x_{n})$ for each $n\ge 1$. Again from the fact that $\{x_{n}\}$ is Cauchy in $L_{\overline {\alpha}}(\mathcal M,\tau)$, we conclude that $ \overline {\alpha}(x_n-x_0)\rightarrow 0$ as $n$ goes to infinity.

Therefore $L_{\overline {\alpha}}(\mathcal M,\tau)$ is a  Banach space with respect to the norm $\overline{\alpha}$. This ends the proof of the whole proposition.
\end{proof}


\subsection {H\"{o}lder's inequality}

\vspace{0.5cm} In this subsection, we will prove the H\"{o}lder's
inequality for $L^\alpha(\mathcal M,\tau)$ when $\alpha$  is a
normalized, unitarily invariant, $\|\cdot \|_1$-dominating,
continuous norm.

We will need the following result from \cite {Ta}.
\begin{lemma}
\label{completely additive} (Corollary III.3.11 in \cite {Ta}) Let $\mathcal{M}$ be a finite von Neumann algebra
with a faithful normal tracial state $\tau.$ If $\phi$ is a bounded linear
functional on a von Neumann algebra $\mathcal{M},$ then the following two
statements are equivalent:

\begin{enumerate}
\item $\phi$ is normal;

\item For every orthogonal family $\{e_{i}\}_{i\in I}$ in $\mathcal{M},$
\[
\phi(\sum_{i\in I}e_{i})=\sum_{i\in I}\phi(e_{i}).
\]

\end{enumerate}
\end{lemma}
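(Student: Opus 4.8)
The plan is to prove the two implications separately, the forward one being essentially formal and the reverse one being the substance.

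For (1) $\Rightarrow$ (2): given an orthogonal family of projections $\{e_{i}\}_{i\in I}$ in $\mathcal{M}$, the finite partial sums $p_{F}=\sum_{i\in F}e_{i}$, indexed by the finite subsets $F\subseteq I$ directed by inclusion, form an increasing net of projections bounded above by $I$, with $\sup_{F}p_{F}=\sum_{i\in I}e_{i}$ in the $\sigma$-strong (hence $\sigma$-weak) topology. Since a normal functional preserves suprema of bounded increasing nets of positive elements, equivalently is $\sigma$-weakly continuous, we get $\phi\big(\sum_{i\in I}e_{i}\big)=\lim_{F}\phi(p_{F})=\sum_{i\in I}\phi(e_{i})$. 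This direction needs nothing beyond the definition of normality.

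For (2) $\Rightarrow$ (1), I would first reduce to the case of a positive functional. Writing $\phi=\psi_{1}+i\psi_{2}$ with $\psi_{1}=\tfrac{1}{2}(\phi+\phi^{\ast})$ and $\psi_{2}=\tfrac{1}{2i}(\phi-\phi^{\ast})$ Hermitian (here $\phi^{\ast}(x)=\overline{\phi(x^{\ast})}$), each $\psi_{j}$ still satisfies (2) because the $e_{i}$ are self-adjoint, so property (2) for $\phi$ passes to $\phi^{\ast}$ and hence to $\psi_{1},\psi_{2}$. Each Hermitian $\psi_{j}$ has a Jordan decomposition $\psi_{j}=\psi_{j}^{+}-\psi_{j}^{-}$ into positive functionals, and the technical point here is to check that $\psi_{j}^{\pm}$ again satisfy (2); this follows from the variational description of $\psi_{j}^{\pm}$ together with the additivity of $\psi_{j}$ on orthogonal projections. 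It then remains to show: a positive bounded functional $\phi$ satisfying (2) is normal.

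For positive $\phi$ satisfying (2), the standard route is a maximality argument: consider the collection of projections $e\in\mathcal{M}$ for which the functional $x\mapsto\phi(exe)$ is $\sigma$-weakly continuous; property (2) shows this collection is stable under suprema of orthogonal families, so it contains a largest element $e_{0}$, and if $e_{0}\neq I$ one uses spectral theory inside the reduced algebra $(I-e_{0})\mathcal{M}(I-e_{0})$ to find a nonzero subprojection on which $\phi$ is $\sigma$-weakly continuous, contradicting maximality; hence $e_{0}=I$. The main obstacle is exactly this last step, passing from ``completely additive on orthogonal projections'' to genuine $\sigma$-weak continuity; this is precisely the content of Corollary III.3.11 and the results preceding it in \cite{Ta}, and for our purposes we simply invoke that reference rather than reproduce the argument.
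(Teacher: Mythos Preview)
The paper does not prove this lemma at all: it is merely stated, with the attribution to Corollary~III.3.11 in \cite{Ta}, and then used as a black box. So your proposal already goes further than the paper does, and in the end you also defer to the same reference; in that sense the two agree.

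Your argument for (1)$\Rightarrow$(2) is correct and standard. In the direction (2)$\Rightarrow$(1), however, the reduction to the positive case via the Jordan decomposition is a genuine gap as written. From complete additivity of a Hermitian $\psi$ on orthogonal projections it does \emph{not} follow by any simple ``variational description'' that the positive and negative parts $\psi^{+},\psi^{-}$ are separately completely additive; indeed, the Jordan decomposition of a bounded Hermitian functional is known to stay within the predual only \emph{after} one has established normality, so invoking it here is circular. The standard proofs (including Takesaki's) do not pass through this reduction: one works directly with the Hermitian (or even general) $\phi$ and uses either a comparison with a faithful normal state and a spectral/maximality argument, or shows directly that $\phi$ is a finite sum of vector functionals. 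Your sketch of the maximality argument for the positive case captures the right flavor, but the step getting you to the positive case is not justified. Since you ultimately invoke \cite{Ta} anyway, this does not affect the conclusion, but you should not present the Jordan-decomposition reduction as if it were routine.
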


When $\alpha$  is a continuous norm, the following result relates
the dual space of $L^\alpha(\mathcal M,\tau) $ to the space
$L_{\overline {\alpha'}}(\mathcal M,\tau)$.
\begin{proposition}\label{prop3.7}
\label{dual space} Let $\mathcal M$ be a finite von Neumann algebra
with a faithful normal tracial state $\tau.$ Let $\alpha$ be a normalized, unitarily
invariant, $\|\cdot \|_1$-dominating, continuous norm on $\mathcal
M$ (see Definition \ref{def2.2}).    Let   $L_{\overline
{\alpha'}}(\mathcal M,\tau)$ be as {in Definition \ref{def2.10}}.

 Then
for every bounded linear functional $\phi \in(L^{\alpha}(\mathcal{M},\tau))^{\sharp
},$ there is a $\xi \in L_{\overline {\alpha'}}(\mathcal M,\tau)$ such that
$\overline {\alpha'}(\xi)=\| \phi \|$ and $\phi(x)=\tau(x\xi)$ for all $x\in
\mathcal{M}.$
\end{proposition}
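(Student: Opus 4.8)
The plan is to restrict $\phi$ to the $\alpha$-dense subalgebra $\mathcal{M}\subseteq L^{\alpha}(\mathcal{M},\tau)$, show that this restriction is a \emph{normal} linear functional on $\mathcal{M}$, represent it by an element $\xi\in L^{1}(\mathcal{M},\tau)$ via the standard identification of the predual $\mathcal{M}_{*}$ with $L^{1}(\mathcal{M},\tau)$ (see \cite{PX}), and finally verify that $\xi\in L_{\overline{\alpha'}}(\mathcal{M},\tau)$ with $\overline{\alpha'}(\xi)=\|\phi\|$. As a preliminary observation, for $x\in\mathcal{M}$ we have $|\phi(x)|\le\|\phi\|\,\alpha(x)\le\|\phi\|\,\|x\|$, the second inequality by Lemma \ref{lemma2.3}, so $\phi|_{\mathcal{M}}$ is a bounded linear functional on $\mathcal{M}$; and since the $\alpha$-closed unit ball of $\mathcal{M}$ is dense in that of its completion $L^{\alpha}(\mathcal{M},\tau)$, we also get the identity $\|\phi\|=\sup\{|\phi(y)|:y\in\mathcal{M},\ \alpha(y)\le1\}$, which will be used at the end.

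The heart of the argument is to show that $\phi|_{\mathcal{M}}$ is normal, and for this I would apply Lemma \ref{completely additive}: it suffices to check complete additivity on orthogonal families of projections in $\mathcal{M}$. Given such a family $\{e_{i}\}_{i\in I}$, faithfulness of $\tau$ together with $\sum_{i\in I}\tau(e_{i})\le\tau(I)=1$ forces all but countably many $e_{i}$ to vanish; enumerating the rest as $e_{1},e_{2},\dots$ and setting $p=\sum_{n\ge1}e_{n}$ (strong-operator sum), $p_{N}=\sum_{n=1}^{N}e_{n}$, the tail $p-p_{N}=\sum_{n>N}e_{n}$ is a projection with $\tau(p-p_{N})=\sum_{n>N}\tau(e_{n})\to0$ as $N\to\infty$. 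At this point the continuity hypothesis on $\alpha$ (Definition \ref{def2.2}) gives $\alpha(p-p_{N})\to0$, i.e.\ $p_{N}\to p$ in $L^{\alpha}(\mathcal{M},\tau)$; continuity of $\phi$ then yields $\sum_{n=1}^{N}\phi(e_{n})=\phi(p_{N})\to\phi(p)=\phi\big(\sum_{i\in I}e_{i}\big)$. Hence $\phi|_{\mathcal{M}}$ is completely additive, and therefore normal.

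It follows that there exists $\xi\in L^{1}(\mathcal{M},\tau)$ with $\phi(x)=\tau(x\xi)$ for every $x\in\mathcal{M}$, and it remains to identify $\overline{\alpha'}(\xi)$. Using traciality, $\phi(y)=\tau(y\xi)=\tau(\xi y)$ for $y\in\mathcal{M}$, so by Definition \ref{def2.10},
\[
\overline{\alpha'}(\xi)=\sup\{|\tau(\xi y)|:y\in\mathcal{M},\ \alpha(y)\le1\}=\sup\{|\phi(y)|:y\in\mathcal{M},\ \alpha(y)\le1\}=\|\phi\|.
\]
In particular $\overline{\alpha'}(\xi)<\infty$, so $\xi\in L_{\overline{\alpha'}}(\mathcal{M},\tau)$, and $\phi(x)=\tau(x\xi)$ for all $x\in\mathcal{M}$, which completes the proof.

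I expect the normality step to be the only real obstacle, and it is precisely there that the continuity of $\alpha$ (membership in $N_{c}(\mathcal{M},\tau)$ rather than merely $N(\mathcal{M},\tau)$) is essential: without $\lim_{\tau(e)\to0}\alpha(e)=0$ the tail projections $p-p_{N}$ need not converge to $0$ in $\alpha$, and $\phi$ could then fail to be weak* continuous, so that no such $\xi$ would exist.
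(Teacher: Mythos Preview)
Your proof is correct and follows essentially the same approach as the paper: restrict $\phi$ to $\mathcal{M}$, use the continuity of $\alpha$ to show complete additivity on orthogonal projections via Lemma~\ref{completely additive}, represent the resulting normal functional by some $\xi\in L^{1}(\mathcal{M},\tau)$, and then compute $\overline{\alpha'}(\xi)=\|\phi\|$ from the density of $\mathcal{M}$ in $L^{\alpha}(\mathcal{M},\tau)$. Your write-up is in fact slightly more detailed than the paper's (the explicit countability reduction, the remark that $\phi|_{\mathcal{M}}$ is bounded via Lemma~\ref{lemma2.3}, and the use of traciality to match the definition of $\overline{\alpha'}$), but the argument is the same.
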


\begin{proof}
Suppose $\alpha \in  {N}_{c}(\mathcal M,\tau)$ and $\phi \in(L^{\alpha}(\mathcal{M},\tau%
))^{\sharp}.$ Let $\{e_{n}\}$ be a family of orthogonal projections in
$\mathcal{M}.$ It is easily verified that $\sum_{n=N}^{\infty}e_{n}%
\rightarrow 0$ in the strong operator topology as $N$ approaches infinity. Since $\tau$ is normal, by
Lemma \ref{completely additive}, we have that $\displaystyle \lim_{N\rightarrow \infty}\tau(\sum_{n=N}^{\infty}e_{n})\rightarrow0.$
Note that  $\alpha \in {N}_{c}(\mathcal M,\tau)$. Then the continuity of $\alpha$ with respect to $\tau$ implies that  $\displaystyle \lim_{N\rightarrow \infty}\alpha(\sum_{n=N}^{\infty}%
e_{n})\rightarrow0.$ From the fact that $\phi \in(L^{\alpha}(\mathcal{M},\tau))^{\sharp
},$  we know
\[
\lim_{N\rightarrow \infty} \phi(\sum_{n=1}^{\infty}e_{n}-\sum_{n=1}^{N-1}e_{n})=\lim_{N\rightarrow \infty}\phi(\sum_{n=N}^{\infty
}e_{n})=0.
\]
Now Lemma \ref{completely additive} implies that $\phi$ is a normal functional on
$\mathcal{M}$. Hence $\phi$ is in the predual space of $\mathcal{M},$ i.e.  there is a
$\xi \in L^{1}(\mathcal{M},\tau)$ such that $\phi(x)=\tau(x\xi)$ for all
$x\in \mathcal{M}.$ Furthermore, since $\mathcal{M}$ is dense in $L^{\alpha
}(\mathcal{M},\tau),$ we see
\begin{align*}
 \| \phi \| & =\sup \{|\phi(x)|: x\in{\mathcal{M}}, \alpha(x)\leq1|\} \\
& =\sup \{|\tau(x\xi)|: x\in{\mathcal{M}}, \alpha(x)\leq1\} \\
& =\overline{\alpha'}(\xi),
\end{align*}
which implies that $\xi \in L_{\overline {\alpha'}}(\mathcal M,\tau).$ This ends the proof of the result.
\end{proof}

For a finite von Neumann algebra $\mathcal{M}$ acting on a Hilbert
space $\mathcal{H},$ the set of possibly unbounded, closed and
densely defined operators on $\mathcal{H}$ which are affiliated to
$\mathcal{M},$ forms a topological *-algebra where the topology is
the non-commutative topology of convergence in measure \cite{Nelson}.
We will denote this algebra by $\widetilde{\mathcal{M}};$ it is the
closure of $\mathcal{M}$ in the topology just mentioned. We let
$\widetilde {\mathcal{M}}_{+}$ be the set of positive operators in
$\widetilde{\mathcal{M}}.$  Then the trace $$\tau:\mathcal
M_+\rightarrow [0,\infty)$$ can be extended   to a generalized trace
$$\widetilde \tau: \widetilde {\mathcal{M}}_{+} \rightarrow
[0,\infty].$$ We refer to \cite{Se}, \cite{Y} for more details on
the non-commutative integration theory.

We will summarize some properties of the generalized trace on
$\widetilde {\mathcal M}_+$ as follows.

\begin{lemma}
\label{spectrum projection} Let $\mathcal M$ be a finite von Neumann
algebra with  a faithful normal tracial state $\tau$  acting on a
Hilbert space $\mathcal H$. Let $\widetilde{\mathcal{M}}$ be the set
of closed and densely-defined operators affiliated to $\mathcal{M}$
and $\widetilde {\mathcal{M}}_{+}$ be the set of positive operators
in $\widetilde{\mathcal{M}}.$
If $a\in \widetilde{\mathcal{M}}_{+},$ there is a family $\{e_{\lambda}%
\}_{\lambda>0}$ of projections (spectral resolution of $a$) in $\mathcal{M}$ such that
\begin{enumerate}

\item $e_{\lambda}\rightarrow I$ increasingly;

\item $e_{\lambda}a=ae_{\lambda}\in \mathcal{M}$ for every $0< \lambda<\infty;$

\item $\widetilde \tau(a)=\sup_{\lambda>0}\tau(e_{\lambda}a)$ \ \  ($\ \widetilde \tau(a) $ could be infinity);

\item If $a\in L^{1}(\mathcal{M},\tau),$ then $\|e_{\lambda}a-a\|_{1}\rightarrow0.$


\end{enumerate}
Moreover, assume that $x$ is an element in $\widetilde {\mathcal
M}$. Then  $x\in L^{1}(\mathcal{M},\tau)$ if and only if $\widetilde
\tau(|x|)<\infty$.
\end{lemma}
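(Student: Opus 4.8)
The plan is to let $\{e_\lambda\}_{\lambda>0}$ be the spectral resolution of the positive self-adjoint operator $a$ affiliated with $\mathcal M$: writing $a=\int_0^\infty\mu\,de_\mu$, we take $e_\lambda$ to be the spectral projection of $a$ associated with $[0,\lambda]$. Items (1) and (2) are then immediate from the spectral theorem for operators affiliated with a von Neumann algebra: since $a$ is densely defined, $e_\lambda$ increases to $I$ in the strong operator topology, while $e_\lambda a=ae_\lambda=\int_0^\lambda\mu\,de_\mu$ is a bounded positive operator of norm at most $\lambda$ commuting with every spectral projection of $a$, hence lies in $\mathcal M$. For item (3), recall that the generalized trace is built from $\tau$ by order continuity, so that $\widetilde\tau(a)=\int_0^\infty\mu\,d\tau(e_\mu)\in[0,\infty]$ (equivalently $\widetilde\tau(a)=\sup\{\tau(b):b\in\mathcal M_+,\ b\le a\}$; see \cite{Se}, \cite{Y}). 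Applying the normal linear functional $\tau$ to $e_\lambda a=\int_0^\lambda\mu\,de_\mu$ gives $\tau(e_\lambda a)=\int_0^\lambda\mu\,d\tau(e_\mu)$, and letting $\lambda\to\infty$ yields $\sup_{\lambda>0}\tau(e_\lambda a)=\widetilde\tau(a)$.

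For item (4), suppose $a\in L^1(\mathcal M,\tau)$; then, by the final assertion (proved below) applied to $x=a$ and using $|a|=a$, we have $\widetilde\tau(a)<\infty$. Since $e_\lambda$ commutes with $a$, the element $a-e_\lambda a=(I-e_\lambda)a$ is positive, hence $\|a-e_\lambda a\|_1=\widetilde\tau((I-e_\lambda)a)$. As $e_\lambda a\in\mathcal M\subseteq L^1(\mathcal M,\tau)$, additivity of $\widetilde\tau$ gives $\widetilde\tau((I-e_\lambda)a)=\widetilde\tau(a)-\tau(e_\lambda a)$, and by item (3) this converges to $0$ as $\lambda\to\infty$; thus $\|e_\lambda a-a\|_1\to0$.

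For the final equivalence, first assume $x\in\widetilde{\mathcal M}$ with $\widetilde\tau(|x|)<\infty$. Take the polar decomposition $x=u|x|$ with $u\in\mathcal M$ a partial isometry, and let $f_\lambda$ be the spectral projection of $|x|$ for $[0,\lambda]$; then $xf_\lambda=u(|x|f_\lambda)\in\mathcal M$, and for $\mu<\lambda$ one has $\|xf_\lambda-xf_\mu\|_1\le\||x|(f_\lambda-f_\mu)\|_1=\tau(|x|f_\lambda)-\tau(|x|f_\mu)$, which tends to $0$ as $\mu,\lambda\to\infty$ because $\tau(|x|f_\lambda)$ increases to $\widetilde\tau(|x|)<\infty$. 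Hence $\{xf_n\}_{n\in\mathbb N}$ is a $\|\cdot\|_1$-Cauchy sequence, so it converges to some $y\in L^1(\mathcal M,\tau)$; since convergence in $\|\cdot\|_1$ implies convergence in measure and $xf_n\to x$ in measure (as $f_n\uparrow I$), we conclude $y=x$, so $x\in L^1(\mathcal M,\tau)$. Conversely, if $x\in L^1(\mathcal M,\tau)$, choose $x_n\in\mathcal M$ with $\|x_n-x\|_1\to0$; then $\widetilde\tau(|x_n|)=\tau(|x_n|)=\|x_n\|_1$ is bounded in $n$, and lower semicontinuity of $y\mapsto\widetilde\tau(|y|)$ with respect to convergence in measure — a standard property of the extended trace — gives $\widetilde\tau(|x|)\le\liminf_n\widetilde\tau(|x_n|)<\infty$.

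I expect the main obstacle to lie not in these computations but in the background facts they invoke: the order continuity and additivity of the extended trace $\widetilde\tau$, its lower semicontinuity along convergence in measure, and the identification of the abstract completion $L^1(\mathcal M,\tau)$ with the concrete space $\{y\in\widetilde{\mathcal M}:\widetilde\tau(|y|)<\infty\}$ equipped with $\|y\|_1=\widetilde\tau(|y|)$. These belong to the classical non-commutative integration theory, and I would cite them from \cite{Se}, \cite{Nelson}, \cite{Y} rather than reprove them; with those in hand the lemma reduces to the spectral-calculus manipulations above.
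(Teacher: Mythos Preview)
Your argument is correct; in fact, the paper does not supply a proof of this lemma at all, merely stating that the result is well-known and referring the reader to Section~1.1 of \cite{Fa} (Fack--Kosaki) and to \cite{Y} (Yeadon). Your sketch via the spectral resolution $e_\lambda=\chi_{[0,\lambda]}(a)$, together with order continuity and additivity of $\widetilde\tau$ for item~(3), the identity $\|a-e_\lambda a\|_1=\widetilde\tau(a)-\tau(e_\lambda a)$ for item~(4), and the Cauchy/lower-semicontinuity argument for the final equivalence, is exactly how these facts are established in those references. Your own caveat is accurate: the content lies in the foundational properties of $\widetilde\tau$ and the concrete realization of $L^1(\mathcal M,\tau)$ inside $\widetilde{\mathcal M}$, which the paper also treats as imported from \cite{Se}, \cite{Nelson}, \cite{Y}, \cite{Fa} rather than proved.
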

\begin{proof}
The result is well-known. More details could be found in Section 1.1 in \cite{Fa} or in \cite{Y}.
\end{proof}

If no confusion arises, we still use $\tau$ to denote  the
generalized trace $\widetilde \tau$ on $
\widetilde{\mathcal{M}}_{+}.$

A consequence of the preceding lemma is the following result.

\begin{corollary}
\label{alpha=beta} Let $\mathcal M$ be a finite von Neumann algebra
with a faithful normal tracial state $\tau$  acting on a Hilbert
space $\mathcal H$. Let $\alpha$ be a normalized, unitarily
invariant, $\|\cdot \|_1$-dominating, continuous norm on $\mathcal
M$ (see Definition \ref{def2.2}). Let $\alpha'$ be the dual norm of
$\alpha$ on $\mathcal M$ (see Definition \ref{def2.8}).  Let
$\overline{\alpha}$ and $\overline{\alpha'}$ be as defined {in
Definition \ref{def2.10}}.  Then
$$\alpha(x)=\overline{\alpha}(x)\qquad \text{ and } \qquad \alpha^{\prime}(x)=\overline{\alpha'}(x) \ \ \text{ for all
}  \ x\in \mathcal{M}.$$
\end{corollary}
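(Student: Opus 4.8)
The plan is to show the two equalities $\alpha = \overline{\alpha}$ and $\alpha' = \overline{\alpha'}$ on $\mathcal M$. By Lemma \ref{lemma3.2} we already have $\overline{\alpha'}(x) = \alpha'(x)$ for every $x \in \mathcal M$, so the second equality is free. The whole work is therefore to upgrade the inequality $\overline{\alpha}(x) \le \alpha(x)$ (also from Lemma \ref{lemma3.2}) to an equality for $x \in \mathcal M$. In other words, I must prove $\alpha(x) \le \overline{\alpha}(x)$, i.e.
\[
\alpha(x) \le \sup\{|\tau(xy)| : y \in \mathcal M,\ \alpha'(y) \le 1\}, \qquad \forall x \in \mathcal M.
\]

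First I would recall that for $x \in \mathcal M$, by the very definition of $\alpha'$ and a Hahn--Banach / duality argument, one always has $\alpha(x) = \sup\{|\tau(xy)| : y \in \mathcal M,\ \alpha'(y) \le 1\}$ \emph{provided} the bidual norm $\alpha''$ equals $\alpha$ on $\mathcal M$. So the real content is that $\alpha$ is reflexive as a norm on $\mathcal M$ in this pairing, i.e.\ $\alpha'' = \alpha$. To see this I would argue as follows: consider the linear functional $\phi_x(y) = \tau(xy)$ on $\mathcal M$ equipped with the norm $\alpha'$; it extends to a bounded functional on $L^{\alpha'}(\mathcal M,\tau)$ of norm $\alpha''(x) = \overline{\alpha}(x)$. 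I want to realize $\alpha(x)$ as such a norm. The standard fact is that for $x \in \mathcal M$,
\[
\alpha(x) = \sup\{ |\tau(xy)| : y \in \mathcal M,\ \alpha'(y) \le 1 \},
\]
which is exactly the classical statement that the unitarily invariant norm $\alpha$ and its dual $\alpha'$ are in \emph{trace duality} on a finite von Neumann algebra; this is where I expect to invoke the normality machinery. Concretely: by Proposition \ref{property of beta} the norm $\overline{\alpha}$ is again normalized, unitarily invariant, and $\|\cdot\|_1$-dominating on $L_{\overline\alpha}(\mathcal M,\tau) \supseteq \mathcal M$, so $\overline{\overline{\alpha}}$ makes sense; and by Proposition \ref{dual space} every bounded functional on $L^{\alpha}(\mathcal M,\tau)$ is implemented by an element of $L_{\overline{\alpha'}}(\mathcal M,\tau)$ with matching norm $\overline{\alpha'} = \alpha'$. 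Dually, by applying Proposition \ref{dual space} with $\alpha$ replaced by $\alpha'$ — which is legitimate only if $\alpha' \in N_c(\mathcal M,\tau)$, i.e.\ $\alpha'$ is also a continuous norm — we get that every functional on $L^{\alpha'}(\mathcal M,\tau)$ is given by an element $\xi$ with $\overline{\alpha}(\xi) = \|\phi\|$. Taking $\phi = \phi_x$ then yields $\overline{\alpha}(x) \ge \alpha(x)$ after identifying norms, because $x$ itself implements $\phi_x$ and $\alpha(x)$ is at least the operator norm of $\phi_x$ on $L^{\alpha'}$.

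The main obstacle, and the point I would treat most carefully, is precisely the hypothesis needed to run Proposition \ref{dual space} in the reversed roles: one needs $\alpha'$ to be a \emph{continuous} norm (in the sense of Definition \ref{def2.2}(b)), not merely a member of $N(\mathcal M,\tau)$. I would establish this from the continuity of $\alpha$: if $e$ ranges over projections with $\tau(e) \to 0$, then for $y \in \mathcal M$ with $\alpha(y) \le 1$ one estimates $|\tau(ey)| = |\tau(e y e)| + \text{(small)}$ using that $eye$ is supported on a small projection and $\alpha(eye) \le \alpha(y) \le 1$, together with the $\|\cdot\|_1$-domination and the continuity of $\alpha$, to force $\alpha'(e) = \sup\{|\tau(ey)|\} \to 0$. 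Actually a cleaner route: since $\alpha$ is continuous, $\alpha(e) \to 0$, and since $\alpha' \ge \|\cdot\|_1$ one has $\alpha'(e) \ge \tau(e) \to 0$ trivially in one direction, while for the upper bound $\alpha'(e)\le \|e\|_? $ I would use the interpolation-type estimate $\alpha'(e) \le \alpha'(e)^{?}$... — the honest statement is that $\alpha'(e) \le \sqrt{\tau(e)}$-type bounds hold because $|\tau(ey)| \le \|y\|_2\|e\|_2 = \|e\|_2\,\|y\|_2 \le \|e\|_2\,\alpha(y)$ once one knows $\|\cdot\|_2 \le \alpha$ — but that last containment need not hold for general $\alpha \in N_c$. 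So instead I would argue directly: given $\varepsilon>0$, choose $\delta$ so that $\tau(f)<\delta \Rightarrow \alpha(f)<\varepsilon$ for projections $f$; then for a projection $e$ with $\tau(e)<\delta$ and any $y$ with $\alpha'(y)\le 1$, write $|\tau(ye)| = |\tau(e y e)| \le \alpha(e)\,\alpha'(ye)$... this is circular, so the correct move is $|\tau(ye)| \le \alpha(e)\,\alpha'(y) \le \varepsilon$ using part (iv) of Lemma \ref{lemma2.6} with the roles of $\alpha,\alpha'$ interchanged and $x$ replaced by $e$. Hence $\overline{\alpha}(e) \le \alpha(e) \to 0$; but we want $\alpha'(e)\to 0$, which is the genuinely needed continuity of $\alpha'$, and it follows symmetrically since $\alpha(e) \to 0$ gives, for all $y$ with $\alpha(y)\le 1$, $|\tau(ey)| \le \|e\|\,\|ey\|_1$... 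Once $\alpha' \in N_c(\mathcal M,\tau)$ is secured, Proposition \ref{dual space} applied to $\alpha'$ gives the missing inequality and the Corollary follows; I would then simply record $\alpha(x) = \overline{\alpha}(x)$ and $\alpha'(x) = \overline{\alpha'}(x)$ for all $x \in \mathcal M$.
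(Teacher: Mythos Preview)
Your proposal has a genuine gap. Your strategy is to apply Proposition~\ref{dual space} with $\alpha$ replaced by $\alpha'$, which, as you correctly note, requires $\alpha' \in N_c(\mathcal M,\tau)$, i.e.\ that $\alpha'$ be \emph{continuous}. You then spend most of the argument trying to prove this and never succeed --- and in fact it is false in general. Take $\alpha = \|\cdot\|_1$, which certainly lies in $N_c(\mathcal M,\tau)$. Its dual norm is $\alpha' = \|\cdot\|$, the operator norm, and for every nonzero projection $e$ one has $\|e\| = 1$ regardless of how small $\tau(e)$ is. So $\alpha'$ is not continuous, Proposition~\ref{dual space} cannot be invoked with $\alpha'$ in place of $\alpha$, and your proposed route collapses. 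Your various attempts to bound $\alpha'(e)$ by something tending to zero were doomed from the start; the circularity and the incomplete estimates you ran into are symptoms of this obstruction, not of a missing trick.

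The paper avoids this problem by never needing continuity of $\alpha'$. It applies Hahn--Banach on $L^\alpha(\mathcal M,\tau)$ (not on $L^{\alpha'}$) to produce a norming functional $\phi$ for $x$, then uses Proposition~\ref{dual space} for the \emph{given} continuous norm $\alpha$ to represent $\phi$ by some $\xi \in L_{\overline{\alpha'}}(\mathcal M,\tau)$ with $\overline{\alpha'}(\xi) = 1$ and $|\tau(x\xi)| = \alpha(x)$. The remaining issue is that $\xi$ need not lie in $\mathcal M$, so $\tau(x\xi)$ is not a priori one of the terms in the supremum defining $\overline{\alpha}(x)$. The paper handles this by taking the polar decomposition $\xi = uh$ and truncating $h$ via its spectral projections $e_\lambda$, so that $uhe_\lambda \in \mathcal M$, $\alpha'(uhe_\lambda) = \overline{\alpha'}(uhe_\lambda) \le \overline{\alpha'}(\xi) \le 1$ (using Lemma~\ref{property of beta2} and the already-known equality $\alpha' = \overline{\alpha'}$ on $\mathcal M$), and $\tau(xuhe_\lambda) \to \tau(x\xi)$ since $x \in \mathcal M$ and $\|h - he_\lambda\|_1 \to 0$. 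This gives $\overline{\alpha}(x) \ge |\tau(x\xi)| = \alpha(x)$, which is exactly the missing inequality. The asymmetry between $\alpha$ and $\alpha'$ in the argument is essential and reflects the fact that only $\alpha$ is assumed continuous.
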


\begin{proof}
It is clear from Lemma \ref{lemma3.2}   that $\alpha^{\prime}(x)=\overline{\alpha'}(x)$ and $\overline{\alpha}
(x) \leq \alpha(x)$ for all $x\in \mathcal{M}.$ We will need only to show that  $\overline{\alpha}
(x) \ge \alpha(x)$ for all $x\in \mathcal{M}.$

Now
suppose $x\in \mathcal{M}$ with $\alpha(x)=1.$ By the Hahn-Banach theorem,
there is a continuous linear functional $\phi \in(L^{\alpha}(\mathcal{M},\tau%
))^{\sharp}$ such that $\phi(x)=\alpha(x)=1$ and $\| \phi \|=1.$ Since $\phi
\in(L^{\alpha}(\mathcal{M},\tau))^{\sharp},$ from Proposition \ref {prop3.7},  there is an element $\xi
\in L_{\overline {\alpha'}}(\mathcal M,\tau)$ such that $\phi(x)=|\tau
(x\xi)|=1$ and $\overline {\alpha'}(\xi)=\| \phi \|=1.$

Let $\xi=uh$ be the polar decomposition of $\xi \in  L_{\overline
{\alpha'}}(\mathcal M,\tau),$ where $u\in \mathcal{M}$ is a unitary
and $h\in  L_{\overline {\alpha'}}(\mathcal M,\tau)\subseteq
L^{1}(\mathcal{M})$ is positive.   Then it follows from Lemma
\ref{spectrum projection} that  there exists  a family
$\{e_{\lambda}\}_{\lambda>0}$ of projections in  $\mathcal{M}$ such
that \begin{align}\|h-he_{\lambda}\|_{1}\rightarrow0   \tag{$*$}
\end{align}  and  $ e_{\lambda}h =he_{\lambda}\in \mathcal{M} $  for
every $0<\lambda<\infty.$ Thus  $uhe_{\lambda}\in \mathcal{M}.$ We
see that
\begin{align}
\alpha^{\prime}(uhe_{\lambda})=\overline{\alpha'}(uhe_{\lambda})\leq\overline{\alpha'}(uh)\|e_{\lambda}\|
\leq \overline{\alpha'}(uh)=\overline{\alpha'}(\xi)=1. \tag{ by
Lemma \ref{lemma3.2} \& \ref{property of beta2}) ($**$}
\end{align}
Therefore,
\begin{align}
|\tau(x\xi)| =|\tau(xuh)|&=\lim_{\lambda \rightarrow
\infty}|\tau(xuhe_{\lambda })| \tag{by ($*$) and $xu\in\mathcal M$}\\
&\leq \sup \{|\tau(xy)|:y\in{\mathcal{M}},
\alpha^{\prime}(y)\leq1\}. \tag{by ($**$)}
\end{align}
Hence, from the definition of $\overline{\alpha}$ we obtain
\[
\overline{\alpha}(x) =\sup \{|\tau(xy)|: y\in{\mathcal{M}},
\alpha^{\prime}(y)\leq1\} \geq|\tau(x\xi)|=1=\alpha(x).
\]
This finishes the proof of the result.
\end{proof}

A quick corollary of the preceding result is the following conclusion.

\begin{proposition}\label{prop3.10}
 Let $\mathcal M$ be a finite von Neumann algebra with a faithful
normal tracial state $\tau$  acting on a Hilbert space $\mathcal H$.
Let $\alpha$ be a normalized, unitarily invariant, $\|\cdot
\|_1$-dominating, continuous norm on $\mathcal M$ (see Definition
\ref{def2.2}). Let $\alpha'$ be the dual norm of $\alpha$ on
$\mathcal M$ (see Definition \ref{def2.8}).  Let $\overline{\alpha}$
and $\overline{\alpha'}$ be as defined {in Definition
\ref{def2.10}}.

There are natural embeddings
$$
L^\alpha(\mathcal M,\tau) \hookrightarrow
L_{\overline{\alpha}}(\mathcal{M},\tau ) \quad \text{ and } \quad
L^{\alpha'}(\mathcal M,\tau) \hookrightarrow
L_{\overline{\alpha'}}(\mathcal{M},\tau )  \qquad \text{
isometrically},
$$ such that
$$
x \mapsto x \quad \text{ and } \quad  x \mapsto x, \qquad \forall \
x\in \mathcal M.$$ Thus $ L^\alpha(\mathcal M,\tau)$ and
$L^{\alpha'}(\mathcal M,\tau)$ are Banach subspaces of $
L_{\overline{\alpha}}(\mathcal{M},\tau )$, and $
L_{\overline{\alpha'}}(\mathcal{M},\tau )  $ respectively.
\end{proposition}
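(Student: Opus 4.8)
The plan is to show that the identity map on $\mathcal M$ extends isometrically from $(\mathcal M,\alpha)$ into $L_{\overline\alpha}(\mathcal M,\tau)$ and then identify the closure with $L^\alpha(\mathcal M,\tau)$. The crucial input is Corollary \ref{alpha=beta}, which tells us that $\alpha(x)=\overline\alpha(x)$ for every $x\in\mathcal M$; so on the dense subalgebra $\mathcal M$ the two norms literally agree. First I would observe that $L_{\overline\alpha}(\mathcal M,\tau)$ is, by the preceding proposition, a Banach space with respect to $\overline\alpha$, and $\mathcal M\subseteq L_{\overline\alpha}(\mathcal M,\tau)$ since $\overline\alpha(x)\le\|x\|<\infty$ for $x\in\mathcal M$ (part (iii$_2$) of Proposition \ref{property of beta}). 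Thus the inclusion $\iota:\mathcal M\to L_{\overline\alpha}(\mathcal M,\tau)$ is a linear map which, by Corollary \ref{alpha=beta}, satisfies $\overline\alpha(\iota(x))=\overline\alpha(x)=\alpha(x)$, i.e. it is an isometry for the norm $\alpha$ on the domain.

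Next I would invoke the universal property of completion: since $L^\alpha(\mathcal M,\tau)$ is by definition the completion of $(\mathcal M,\alpha)$, the isometry $\iota$ extends uniquely to an isometric linear map $\bar\iota:L^\alpha(\mathcal M,\tau)\to L_{\overline\alpha}(\mathcal M,\tau)$, because the target is complete. Concretely, given $\xi\in L^\alpha(\mathcal M,\tau)$ choose $x_n\in\mathcal M$ with $\alpha(x_n-\xi)\to 0$; then $\{x_n\}$ is $\alpha$-Cauchy, hence $\overline\alpha$-Cauchy in $L_{\overline\alpha}(\mathcal M,\tau)$, so it converges to some element which we call $\bar\iota(\xi)$, and the limit is independent of the approximating sequence and preserves the norm. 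Restricted to $\mathcal M$ this is the identity map $x\mapsto x$, which is the asserted formula. The same argument verbatim, replacing $\alpha$ by $\alpha'$ and $\overline\alpha$ by $\overline{\alpha'}$, gives the second embedding $L^{\alpha'}(\mathcal M,\tau)\hookrightarrow L_{\overline{\alpha'}}(\mathcal M,\tau)$.

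Finally, since $\bar\iota$ is an isometry into a Banach space, its image is a closed subspace of $L_{\overline\alpha}(\mathcal M,\tau)$ (the image of a complete space under an isometry is complete, hence closed), so $L^\alpha(\mathcal M,\tau)$ is identified with a Banach subspace of $L_{\overline\alpha}(\mathcal M,\tau)$, and likewise for the primed version; this is the last sentence of the statement. One small point worth spelling out for cleanliness is that the embedding is genuinely injective — but this is immediate from it being an isometry, so there is really nothing to check beyond the completion argument. I do not expect a serious obstacle here: the entire content has already been extracted in Corollary \ref{alpha=beta}, and the remainder is the standard fact that an isometry from a normed space into a Banach space extends to its completion. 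The only thing to be careful about is making sure that the extension of $\iota$ restricted to $\mathcal M$ is still the literal inclusion (not merely something isometrically equivalent), which follows because $\bar\iota$ agrees with $\iota$ on the dense subspace $\mathcal M$ by construction.
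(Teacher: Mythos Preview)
Your proposal is correct and matches the paper's approach: the paper does not give an explicit proof but simply states that Proposition~\ref{prop3.10} is a ``quick corollary'' of Corollary~\ref{alpha=beta}, and your argument is precisely the standard completion-and-extension fleshing-out of that remark. The one minor comment is that for the $\alpha'$ embedding you do not even need the full strength of Corollary~\ref{alpha=beta} (which invokes continuity of $\alpha$); the equality $\alpha'(x)=\overline{\alpha'}(x)$ on $\mathcal M$ is already contained in Lemma~\ref{lemma3.2} directly from the definitions.
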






The following theorem is a generalization of H\"{o}lder's inequality
in non-commutative $L^p$-spaces.

\begin{theorem}\label{thm3.11}
Let $\mathcal M$ be a finite von Neumann algebra with a faithful
normal tracial state $\tau$  acting on a Hilbert space $\mathcal H$.
Let $\alpha$ be a normalized, unitarily invariant, $\|\cdot
\|_1$-dominating, continuous  norm on $\mathcal M$ (see Definition
\ref{def2.2}). Let $\alpha'$ be the dual norm of $\alpha$ on
$\mathcal M$ (see Definition \ref{def2.8}).  Let $L_{\overline
{\alpha}}(\mathcal M,\tau)$ and $L_{\overline {\alpha'}}(\mathcal
M,\tau)$ be as defined {in Definition \ref{def2.10}}.

 If $x\in L_{\overline {\alpha}}(\mathcal M,\tau)$ and $y\in{L_{\overline {\alpha'}}(\mathcal M,\tau)}%
 ,$ then $xy\in L^{1}(\mathcal{M},\tau)$ and $\|xy\|_{1}\leq
\overline {\alpha}(x)\overline {\alpha'}(y).$ In particular, if $x\in L^{  {\alpha}}(\mathcal M,\tau)$ and $y\in{L_{\overline {\alpha'}}(\mathcal M,\tau)}%
 ,$ then $xy\in L^{1}(\mathcal{M},\tau)$ and $\|xy\|_{1}\leq
  {\alpha}(x)\overline {\alpha'}(y).$
\end{theorem}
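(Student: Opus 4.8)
The plan is to prove Hölder's inequality by approximating both factors by bounded operators, reducing to the case already packaged in Proposition \ref{property of beta2} and Corollary \ref{alpha=beta}. First I would record the ``bilinear'' estimate that is implicit in the definitions: for $x\in L_{\overline{\alpha}}(\mathcal M,\tau)$ and $b\in\mathcal M$ we have $|\tau(xb)|\le\overline{\alpha}(x)\alpha'(b)$, and symmetrically $|\tau(ay)|\le\overline{\alpha'}(y)\alpha(a)$ for $y\in L_{\overline{\alpha'}}(\mathcal M,\tau)$ and $a\in\mathcal M$; both follow directly from Definition \ref{def2.10} together with Corollary \ref{alpha=beta} (which identifies $\alpha=\overline\alpha$ and $\alpha'=\overline{\alpha'}$ on $\mathcal M$). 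The point is to upgrade one of these to the case where the second factor is only in $L^1$.

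Next I would use the spectral-truncation device from Lemma \ref{spectrum projection}. Write the polar decomposition $y=vk$ with $v\in\mathcal M$ unitary and $k=|y|\in L^1(\mathcal M,\tau)_+$, and let $\{e_\lambda\}$ be the spectral projections of $k$, so that $ke_\lambda=e_\lambda k\in\mathcal M$ and $\|k-ke_\lambda\|_1\to 0$. Then $y_\lambda:=vke_\lambda\in\mathcal M$, and by Proposition \ref{property of beta2}(ii) together with Proposition \ref{property of beta}(ii) we get $\overline{\alpha'}(y_\lambda)=\overline{\alpha'}(ke_\lambda)\le\overline{\alpha'}(k)=\overline{\alpha'}(y)$. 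Now fix $x\in L_{\overline{\alpha}}(\mathcal M,\tau)$ and any unitary $w\in\mathcal M$; applying the elementary estimate above to the bounded operator $wy_\lambda$ in the role of $b$ yields $|\tau(xwy_\lambda)|\le\overline{\alpha}(x)\,\alpha'(wy_\lambda)=\overline{\alpha}(x)\,\overline{\alpha'}(y_\lambda)\le\overline{\alpha}(x)\,\overline{\alpha'}(y)$. Passing $\lambda\to\infty$: since $x\in L^1$, $xw\in L^1$, and $\|y_\lambda-y\|_1=\|ke_\lambda-k\|_1\to 0$, hence $\tau(xwy_\lambda)\to\tau(xwy)$ by continuity of multiplication $L^1\times L^\infty\to L^1$ followed by $\tau$ — actually more carefully, $\tau((xw)(y_\lambda-y))$ is controlled, but since $y_\lambda-y$ is not bounded I would instead write $\tau(xwy_\lambda)=\tau(y_\lambda xw)$ and use that $xw\in L^1$, $\|y_\lambda-y\|$... — the cleanest route is to note $\tau(xwy_\lambda)-\tau(xwy)=\tau((xw)(y_\lambda-y))$ with $xw\in\mathcal M\cdot L^1\subseteq L^1$ only if $x\in\mathcal M$, which it need not be. So instead I would approximate $x$ first.

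Therefore the actual order of operations is: approximate $x$ by $x_n\in\mathcal M$ with $\overline{\alpha}(x_n-x)\to 0$ (possible since $x\in L_{\overline\alpha}$ and, via Corollary \ref{alpha=beta}, $\mathcal M$ is $\overline\alpha$-dense — or simply observe $\|x_n-x\|_1\le\overline\alpha(x_n-x)$ so $x_n\to x$ in $L^1$ as well). For $x_n\in\mathcal M$ and $y\in L_{\overline{\alpha'}}(\mathcal M,\tau)$, the product $x_ny$ lies in $L^1(\mathcal M,\tau)$ trivially, and for any unitary $w\in\mathcal M$, $\tau(wx_ny)=\lim_\lambda\tau(wx_ny_\lambda)$ because $x_ny_\lambda\to x_ny$ in $L^1$ (as $\|y_\lambda-y\|_1\to0$ and $x_n$ is bounded); combined with $|\tau(wx_ny_\lambda)|=|\tau(x_ny_\lambda w)|\le\overline{\alpha}(x_n)\,\alpha'(y_\lambda w)\le\overline{\alpha}(x_n)\,\overline{\alpha'}(y)$ this gives $|\tau(wx_ny)|\le\overline{\alpha}(x_n)\,\overline{\alpha'}(y)$ for every unitary $w$; taking the supremum over $w$ (using polar decomposition of $x_ny$ as in the proof of Lemma \ref{lemma2.6}(i)) yields $\|x_ny\|_1\le\overline{\alpha}(x_n)\,\overline{\alpha'}(y)$. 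Finally, $\{x_ny\}$ is Cauchy in $L^1$ since $\|x_ny-x_my\|_1\le\overline{\alpha}(x_n-x_m)\,\overline{\alpha'}(y)\to0$, so it converges in $L^1$; one checks the limit is $xy$ (computed in $\widetilde{\mathcal M}$, using $x_n\to x$ in measure and multiplication being continuous in measure), and the norm bound $\|xy\|_1\le\overline{\alpha}(x)\,\overline{\alpha'}(y)$ passes to the limit. The ``in particular'' clause is immediate from $L^\alpha(\mathcal M,\tau)\hookrightarrow L_{\overline\alpha}(\mathcal M,\tau)$ isometrically (Proposition \ref{prop3.10}), i.e. $\overline\alpha(x)=\alpha(x)$ for $x\in L^\alpha$. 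The main obstacle is the bookkeeping around unbounded operators: identifying the $L^1$-limit of $x_ny$ with the product $xy$ taken in $\widetilde{\mathcal M}$, which requires invoking continuity of multiplication in the measure topology and the last sentence of Lemma \ref{spectrum projection} to know $xy\in L^1$ precisely when $\widetilde\tau(|xy|)<\infty$.
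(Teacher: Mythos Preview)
Your argument has a genuine gap at the step ``approximate $x$ by $x_n\in\mathcal M$ with $\overline{\alpha}(x_n-x)\to 0$.'' You justify this by saying ``$\mathcal M$ is $\overline\alpha$-dense'' via Corollary~\ref{alpha=beta}, but that corollary only asserts $\alpha=\overline\alpha$ on $\mathcal M$; it says nothing about density of $\mathcal M$ in $L_{\overline\alpha}(\mathcal M,\tau)$. By definition, the $\overline\alpha$-closure of $\mathcal M$ inside $L_{\overline\alpha}$ is precisely $L^\alpha(\mathcal M,\tau)$ (Proposition~\ref{prop3.10}), and the paper nowhere claims these coincide. So your approximation is available only when $x\in L^\alpha$, which handles the ``in particular'' clause but not the main statement. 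The fallback remark ``or simply observe $\|x_n-x\|_1\le\overline\alpha(x_n-x)$'' does not produce the $x_n$; it merely records a consequence once they exist.

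The paper sidesteps this issue entirely: instead of approximating $x$ or $y$ in the strong norms $\overline\alpha$, $\overline{\alpha'}$, it works with the product $xy=uh$ directly. One spectrally truncates $h=|xy|$ by projections $e$, and then, writing $eu^*x=h_2u_2$, truncates $h_2$ by projections $f_\lambda$ so that $f_\lambda h_2u_2\in\mathcal M$. The key estimate $|\tau(f_\lambda eh)|=|\tau((f_\lambda h_2u_2)y)|\le\alpha(f_\lambda h_2u_2)\,\overline{\alpha'}(y)$ uses only the definition of $\overline{\alpha'}$, and the right-hand side is then bounded by $\overline\alpha(x)\,\overline{\alpha'}(y)$ using just the module property of $\overline\alpha$ (Lemma~\ref{property of beta2}) and unitary invariance --- never any density. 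Your approach could be repaired along similar lines: replace the $\overline\alpha$-approximation of $x$ by spectral truncations $x_\mu$ satisfying $\overline\alpha(x_\mu)\le\overline\alpha(x)$ and $x_\mu\to x$ in measure, then invoke a Fatou-type lower semicontinuity of $\|\cdot\|_1$ along measure-convergent sequences; but as written the argument does not close.
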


\begin{proof}
Suppose $x\in{L_{\overline{\alpha}}(\mathcal M, \tau)}\subseteq
L^1(\mathcal M,\tau)$ and $y\in{L_{\overline {\alpha'}}(\mathcal
M,\tau)}\subseteq L^1(\mathcal M,\tau).$ Then $xy\in
\widetilde{\mathcal{M}},$ where $\widetilde{\mathcal{M}}$ is the set
of closed and densely defined operators affiliated with
$\mathcal{M}.$ Let $xy=uh$ be the polar decomposition of $xy $ in
$\widetilde{\mathcal{M}}$, where $u\in \mathcal{M}$ is a unitary and
$h=|xy|\in \widetilde{\mathcal{M}}_{+}.$ From   Lemma \ref{spectrum
projection}, there exists
   an increasing  family $\{e_{\lambda}\}_{\lambda>0}$ of projections  in
$\mathcal{M},$   such that $e_{\lambda}h=he_{\lambda}\in \mathcal M$ for each $\lambda>0$ and such that $\tau(h)=\sup
_{\lambda>0}\tau(e_{\lambda}h) $.
We will show that $\tau(h)\le \overline {\alpha}(x)\overline {\alpha'}(y).$

Assume, to the contrary, that  $$\tau(h)=\sup_{\lambda>0}%
\tau(e_{\lambda}h)>\overline {\alpha}(x)\overline {\alpha'}(y).$$ Then there is a projection
$e\in \mathcal{M}$ and $\epsilon>0$ such that   $eh\in \mathcal{M}$ and  $$\tau(eh)> \overline {\alpha}(x)\overline {\alpha'}(y)+\epsilon.$$ Note that $eh=eu^{*}xy.$ We
let $eu^{*}x=h_{2}u_{2}$ be the polar decomposition of $eu^{*}x$ in
$\widetilde{\mathcal{M}},$ where $u_{2}\in \mathcal{M}$ is a unitary and $h_{2}%
\in \widetilde{\mathcal{M}}_{+}.$ Again from  Lemma \ref{spectrum projection},  we may choose $\{f_{\lambda}\}_{\lambda>0}$
to be an increasing family of projections   in $\mathcal{M} $ such that (i) $f_{\lambda}\rightarrow I$ increasingly in the
strong operator topology, (ii) $f_{\lambda}h_2=h_2f_\lambda\in\mathcal M$, and (iii) $\tau(eu^*xu_2^*)=\tau(h_2)=\sup_\lambda \tau(f_{\lambda}h_2)$. From (ii), we have
$f_{\lambda}h_{2}u_{2}\in \mathcal{M}$ for each $\lambda>0$.  It follows that, for each $\lambda>0$,
\begin{align}
|\tau(f_{\lambda}eh)| & =|\tau(f_{\lambda}eu^{*}xy)|=|\tau(f_{\lambda}h_{2}
u_{2} y)|\notag\\
& \leq \alpha(f_{\lambda}h_{2} u_{2})\overline\alpha^{\prime}(y)\tag{by definition of $\overline{\alpha'}$}\\
&=\overline{\alpha}(f_{\lambda}h_{2} u_{2})\overline\alpha^{\prime}(y)\tag{by Corollary \ref{alpha=beta}}\\
& \leq \|f_{\lambda}\| \overline{\alpha}( h_{2}u_{2})\overline{\alpha'}(y)\tag{by Lemma \ref{property of beta2}}\\
& \le \overline{\alpha}(h_{2})\overline{\alpha^{\prime}}(y)\tag{by properties  of $\overline{\alpha}$}\\
& = \overline{\alpha}(eu^{*}xu_{2}^{*})\overline{\alpha'}(y)\notag\\
& \leq \|e\| \overline{\alpha}(u^{*}xu_{2}^{*})\overline{\alpha'}(y)\tag{by Lemma \ref{property of beta2}}\\
& \le \overline{\alpha}(x)\overline{\alpha'}(y). \tag{by properties  of $\overline{\alpha}$}
\end{align}
Moreover, since $f_{\lambda}\rightarrow I$ increasingly in the
strong operator topology and  $eh\in \mathcal{M},$ we have $f_{\lambda
}eh\rightarrow eh$ in the strong operator topology. Since $\tau$ is normal, $\tau$ is  continuous on the bounded subset of $\mathcal M$ in strong operator topology. Therefore, we have
\[
\tau(eh) =|\tau(eh)| =\lim_\lambda|\tau(f_{\lambda}eh )|\leq \overline{\alpha}(x)\overline{\alpha'}(y),
\]
which is a contradiction. Therefore
\[
\|xy\|_{1}=\tau(|xy|)=\tau(h)\leq \overline\alpha(x)\overline{\alpha'}(y),
\] and $xy\in L^{1}(\mathcal{M})$.

If $x\in L^{  {\alpha}}(\mathcal M,\tau)$ and $y\in{L_{\overline {\alpha'}}(\mathcal M,\tau)}$, then, from Proposition \ref{prop3.10}, $\alpha(x)=\overline{\alpha}(x).$ Hence, $
\|xy\|_{1} \leq \alpha(x)\overline{\alpha^{\prime}}(y).
$
\end{proof}

\subsection{Dual space of $L^{\alpha}(\mathcal{M},\tau)$}

Now we are ready to describe the dual space of
$L^{\alpha}(\mathcal{M},\tau)$, when $\alpha$ is a normalized,
unitarily invariant, $\|\cdot\|_1$-dominating and continuous norm on
$\mathcal M$.

\begin{theorem} \label{dualSpace}
Let $\mathcal M$ be a finite von Neumann algebra with a faithful
normal tracial state $\tau$. Let $\alpha$ be a normalized, unitarily
invariant, $\|\cdot \|_1$-dominating, continuous norm on $\mathcal
M$ (see Definition \ref{def2.2}).   Let  $L_{\overline
{\alpha'}}(\mathcal M,\tau)$ be as defined {in Definition
\ref{def2.10}}.

 Then $(L^{\alpha
}(\mathcal{M},\tau))^{\sharp}={L_{\overline {\alpha'}}(\mathcal M,\tau)}, $ i.e.,
\begin{enumerate}
\item [(i)] for every $\phi \in(L^{\alpha}(\mathcal{M},\tau))^{\sharp},$ there is a $\xi
\in{L_{\overline {\alpha'}}(\mathcal M,\tau)}$ such that $\overline{\alpha^{\prime
}}(\xi)=\| \phi \|$ and $\phi(x)=\tau(x\xi)$ for all $x\in {L}^{\alpha
}(\mathcal{M},\tau).$
\item [(ii)] for every $\xi\in {L_{\overline {\alpha'}}(\mathcal M,\tau)}$, the mapping $\phi: L^\alpha(\mathcal M, \tau)\rightarrow \mathbb C$, defined by $\phi(x)=\tau(x\xi)$ for all $x$ in $L^{\alpha
}(\mathcal{M},\tau)$, is in  $(L^{\alpha
}(\mathcal{M},\tau))^{\sharp}$. Moreover, $\|\phi\|=\overline {\alpha'}(\xi).$

\end{enumerate}
\end{theorem}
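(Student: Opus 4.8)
The plan is to prove the two parts of Theorem~\ref{dualSpace} by combining the structural results already established. Part (i) splits into the existence of $\xi$ and the computation of $\|\phi\|$, while part (ii) is a boundedness statement that follows from the H\"older inequality of Theorem~\ref{thm3.11}.

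For part (i), I would start from Proposition~\ref{prop3.7}, which already gives a $\xi\in L_{\overline{\alpha'}}(\mathcal M,\tau)$ with $\overline{\alpha'}(\xi)=\|\phi\|$ and $\phi(x)=\tau(x\xi)$ for all $x\in\mathcal M$. The only gap between that proposition and the claim is that the identity $\phi(x)=\tau(x\xi)$ must be extended from $\mathcal M$ to all of $L^\alpha(\mathcal M,\tau)$. I would do this by density: $\mathcal M$ is dense in $L^\alpha(\mathcal M,\tau)$ by construction, $\phi$ is continuous there by hypothesis, and the map $x\mapsto\tau(x\xi)$ is continuous on $L^\alpha(\mathcal M,\tau)$ because $|\tau(x\xi)|\le\alpha(x)\,\overline{\alpha'}(\xi)$ for $x\in\mathcal M$ by the last sentence of Theorem~\ref{thm3.11} (using $\alpha(x)=\overline\alpha(x)$ for $x\in\mathcal M$ from Corollary~\ref{alpha=beta}), so both sides are continuous functionals agreeing on a dense subspace, hence equal. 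Care must be taken that for general $\xi\in L_{\overline{\alpha'}}(\mathcal M,\tau)$ and $x\in L^\alpha(\mathcal M,\tau)$ the product $x\xi$ lies in $L^1(\mathcal M,\tau)$ so that $\tau(x\xi)$ makes sense; this is exactly the content of Theorem~\ref{thm3.11}, and continuity of the $L^1$-trace then closes the argument.

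For part (ii), fix $\xi\in L_{\overline{\alpha'}}(\mathcal M,\tau)$ and define $\phi(x)=\tau(x\xi)$. By Theorem~\ref{thm3.11}, $x\xi\in L^1(\mathcal M,\tau)$ and $\|x\xi\|_1\le\alpha(x)\,\overline{\alpha'}(\xi)$ for $x\in\mathcal M$, hence $|\phi(x)|\le\|x\xi\|_1\le\alpha(x)\,\overline{\alpha'}(\xi)$; this shows $\phi$ extends to a bounded functional on $L^\alpha(\mathcal M,\tau)$ with $\|\phi\|\le\overline{\alpha'}(\xi)$. For the reverse inequality $\|\phi\|\ge\overline{\alpha'}(\xi)$, I would use the definition $\overline{\alpha'}(\xi)=\sup\{|\tau(\xi y)|:y\in\mathcal M,\ \alpha(y)\le1\}$ directly: for each such $y$, the tracial property gives $|\tau(y\xi)|=|\tau(\xi y)|=|\phi(y)|\le\|\phi\|\,\alpha(y)\le\|\phi\|$, and taking the supremum over $y$ yields $\overline{\alpha'}(\xi)\le\|\phi\|$. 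Combining the two inequalities gives $\|\phi\|=\overline{\alpha'}(\xi)$, and the map $\xi\mapsto\phi$ is thus an isometry of $L_{\overline{\alpha'}}(\mathcal M,\tau)$ into $(L^\alpha(\mathcal M,\tau))^\sharp$; together with the surjectivity from part (i) this gives the asserted identification.

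The main obstacle, such as it is, is bookkeeping about where various products and traces are defined: $\xi$ lives in a subspace of $L^1$ rather than in $\mathcal M$, so every occurrence of $\tau(x\xi)$ for $x\notin\mathcal M$ must be justified via the H\"older inequality, and every passage to the limit must invoke continuity of the relevant functional in the $\alpha$-norm (for $\phi$ and for $x\mapsto\tau(x\xi)$) or in the $L^1$-norm (for the trace). There is no deep new idea required beyond assembling Proposition~\ref{prop3.7}, Theorem~\ref{thm3.11}, Corollary~\ref{alpha=beta}, and a density argument; the continuity norm hypothesis $\alpha\in N_c(\mathcal M,\tau)$ has already been spent inside Proposition~\ref{prop3.7} to force normality of $\phi$, so here it is used only indirectly.
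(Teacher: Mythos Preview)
Your proposal is correct and follows essentially the same route as the paper's proof: for (i) you invoke Proposition~\ref{prop3.7} to get $\xi$ and then extend $\phi(x)=\tau(x\xi)$ from $\mathcal M$ to $L^\alpha(\mathcal M,\tau)$ by density together with the H\"older inequality of Theorem~\ref{thm3.11}, exactly as the paper does; for (ii) your two-inequality argument is just an unpacking of the paper's one-line computation $\|\phi\|=\sup\{|\tau(x\xi)|:x\in\mathcal M,\ \alpha(x)\le 1\}=\overline{\alpha'}(\xi)$, which implicitly uses the same tracial identity $\tau(x\xi)=\tau(\xi x)$ that you make explicit.
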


\begin{proof} (i) Assume that $\phi \in(L^{\alpha}(\mathcal{M},\tau))^{\sharp}.$
From Proposition \ref{prop3.7}, there exists a $\xi \in L_{\overline {\alpha'}}(\mathcal M,\tau)$ such that
$\overline {\alpha'}(\xi)=\| \phi \|$ and $\phi(y)=\tau(y\xi)$ for all $y\in
\mathcal{M}.$ Thus we need only to show that $\phi(x)=\tau(x\xi)$ for all $x\in {L}^{\alpha
}(\mathcal{M},\tau).$

Suppose $x\in L^{\alpha}(\mathcal{M},\tau).$ Then there is a sequence $\{x_{n}\}$
in $\mathcal{M}$ such that $\alpha(x_{n}-x)\rightarrow0.$ Note that $\phi
\in(L^{\alpha}(\mathcal{M},\tau))^{\sharp}.$ Then $\phi(x_{n}-x)\rightarrow0.$ By  the generalized H\"{o}lder's inequality (Theorem \ref{thm3.11}), we have
\[
|\tau(x_{n}\xi)-\tau(x\xi)|=|\tau((x_{n}-x)\xi)|\leq \alpha(x_{n}%
-x)\overline{\alpha'}(\xi)\rightarrow0.
\]
Thus $\tau(x\xi)=\lim_{n\rightarrow \infty}\tau(x_{n}\xi)=\lim_{n\rightarrow
\infty}\phi(x_{n})=\phi(x).$

(ii) It follows directly from  the definition $\overline{\alpha'}$ in Definition \ref{def2.10} and the fact that $\mathcal M$ is dense in $L^{\alpha}(\mathcal{M},\tau)$ that  $$\|\phi\|=\sup\{|\phi(x)|: x\in \mathcal M, \alpha(x)\leq 1\}=\sup\{|\tau(x\xi)|: x\in \mathcal M, \alpha(x)\leq 1\}=\overline{\alpha^\prime}(\xi)<\infty,$$ and thus $\phi\in (L^{\alpha
}(\mathcal{M},\tau))^{\sharp} .$
\end{proof}

\section{Non-commutative Hardy spaces $H^{\alpha}$}

Let $\mathcal{M}$ be a finite von Neumann algebra with a faithful
normal tracial state $\tau.$ Given a von Neumann subalgebra
$\mathcal D$ of $\mathcal M$,  a conditional expectation $\Phi:
\mathcal M\rightarrow \mathcal D$ is defined to be a positive linear
map which preserves the identity and satisfies
$\Phi(x_1yx_2)=x_1\Phi(y)x_2$ for all $x_1, x_2\in \mathcal D$ and
$y\in \mathcal M.$
 For a finite von Neumann algebra  $\mathcal{M}$ with a faithful
normal tracial state $\tau$ and a von Neumann subalgebra $\mathcal
D$, it is a well-known fact that there exists a unique, faithful,
normal, conditional expectation $\Phi$ from  $\mathcal M$ onto
 $\mathcal D$ such that $\tau(\Phi (y))=\tau(y),$ for all $y\in
\mathcal M.$ Furthermore it is known that  such $\Phi:
\mathcal M\rightarrow \mathcal D$ can be extended to a contractive  linear mapping $  {\Phi}:L^1(\mathcal M, \tau)\rightarrow L^1(\mathcal D,\tau)$ satisfying $\tau(y)=\tau( \Phi(y))$ for all $y\in L^1(\mathcal M, \tau)$ (for example, see Proposition 3.9 in \cite{MG}.)

\subsection{Arveson's non-commutative Hardy spaces}

We now recall  non-commutative analogue of classical Hardy space
$H^\infty(\mathbb T)$ by Arveson  in \cite{Arv} (also see
\cite{Exel}).

\begin{definition} Suppose $\mathcal M$ is a finite von Neumann algebra with a faithful
normal tracial state $\tau$. Let  $\mathcal A$ be a weak* closed
unital subalgebra of $\mathcal M,$ and let $\Phi$ be a faithful,
normal conditional  expectation from $\mathcal M$ onto the diagonal
von Neumann algebra $\mathcal D=\mathcal A\cap \mathcal A^*.$ Then
$\mathcal A$ is called a finite, maximal subdiagonal subalgebra of
$\mathcal M$ with respect to $\Phi$ if

\begin{enumerate}
\item $\mathcal A+\mathcal A^*$ is weak* dense in $\mathcal M,$
\item $\Phi(xy)=\Phi(x)\Phi(y)$ for all $x, y\in \mathcal A,$
\item $\tau\circ \Phi=\tau.$
\end{enumerate}

Such a finite, maximal subdiagonal subalgebra $\mathcal A$ of
$\mathcal M$ is also called an $H^\infty$ space of $\mathcal M$.
\end{definition}

\begin{example} Let $\mathcal M=M_n(\mathbb C)$ be the algebra of $n\times n$ matrices with complex entries equipped with a trace $\tau.$ Let $\mathcal A$ be the subalgebra of upper triangular matrices. Now $\mathcal D$ is the diagonal matrices and $\Phi$ is the natural projection onto the diagonal. Then $\mathcal A$ is a finite maximal subdiagonal algebra of $\mathcal M$.
\end{example}

\begin{example} Let $\mathcal M= L^\infty(X,\mu)$, where $(X,\mu)$ is a  probability space. Let $\tau(f)=\int f d\mu $ for all $f$
  in  $L^\infty(X,\mu)$.
 Let $\mathcal A$ be a weak* closed subalgebra of $L^\infty(X,\mu)$ such that $I\in \mathcal A,$ $\mathcal A+\mathcal A^*$
 is weak* dense in $L^\infty(X,\mu),$ and such that $\int fgd\mu=(\int f d\mu)(\int gd\mu)$ for all $f, g\in \mathcal A.$
  Let $\Phi(f)=(\int fd\mu)I$  for all $f$
  in  $L^\infty(X,\mu)$.
   Then $\mathcal A$ is a finite, maximal subdiagonal algebra in $ L^\infty(X,\mu).$
  These examples are the weak* Dirichlet algebras of Srinivasan and Wang \cite{SW}.

\end{example}

\subsection{Non-commutative $H^\alpha$-spaces}
Let $H^\infty$ be a  finite, maximal subdiagonal subalgebra
  of $\mathcal M$. We let
$$H_0^\infty=\{x\in H^\infty: \Phi(x)=0\}.$$

 For $\mathcal S\subseteq
L^p(\mathcal M,\tau),  0< p<\infty,$ let $[\mathcal S]_p$ denote the
closure of $\mathcal S$ in $L^p(\mathcal M,\tau)$ with respect to $\|\cdot \|_p$. Let  $$\text{$H^p=[H^\infty]_p$ \qquad  and \qquad
$H_0^{p}=[H_0^\infty]_p.$}$$ For $\mathcal S\subseteq
 \mathcal M ,$ let $\overline{\mathcal S}^{w*}$ denote the
weak*-closure of $\mathcal S$ in $ \mathcal M $.

The following characterization of non-commutative $H^{p}$ space for
$1\leq p\leq \infty$  was proved by  Saito in \cite{Sai}.

\begin{proposition}{\em (from \cite{Sai})}
\label{property of Hp}Let $1\leq p\leq \infty.$

\begin{enumerate}
\item $H^{1}\cap L^{p}(\mathcal{M},\tau)=H^{p}$ and $H_{0}^{1}\cap L^{p}%
(\mathcal{M},\tau)=H_{0}^{p}.$

\item $H^{p}=\{x\in L^{p}(\mathcal{M},\tau):\tau(xy)=0 \  \mbox{for all } y\in
H_{0}^{\infty}\}.$

\item $H_{0}^{p}=\{x\in L^{p}(\mathcal{M},\tau):\tau(xy)=0\  \mbox{for all } y\in
H^{\infty}\}=\{x\in H^{p}: \Phi(x)=0\};$

\end{enumerate}
\end{proposition}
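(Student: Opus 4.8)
The plan is to reduce everything to part (2), and within (2) to the single nontrivial inclusion: if $x\in L^{p}(\mathcal M,\tau)$ satisfies $\tau(xy)=0$ for all $y\in H_{0}^{\infty}$, then $x\in H^{p}$. The reverse inclusion in (2) is routine: if $a_{n}\in H^{\infty}$ and $\|a_{n}-x\|_{p}\to 0$, then for $y\in H_{0}^{\infty}$ one has $\tau(a_{n}y)=\tau(\Phi(a_{n}y))=\tau(\Phi(a_{n})\Phi(y))=0$ by multiplicativity of $\Phi$ on $H^{\infty}$, $\tau\circ\Phi=\tau$, and $\Phi(y)=0$; hence $\tau(xy)=\lim_{n}\tau(a_{n}y)=0$ by H\"older's inequality (for $p=\infty$, $x\in H^{\infty}$ already and $\tau(xy)=\tau(\Phi(x)\Phi(y))=0$ directly). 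Granting (2) for every exponent, the first identity of (1) is immediate, since $x\in H^{1}\cap L^{p}$ iff $x\in L^{p}$ with $\tau(xy)=0$ for all $y\in H_{0}^{\infty}$ (by (2) with exponent $1$) iff $x\in H^{p}$ (by (2) with exponent $p$); the second identity of (1) follows from part (3) in the same way. Part (3) itself follows from (2) using that the trace-preserving conditional expectation extends to a contraction of $L^{p}(\mathcal M,\tau)$ onto $L^{p}(\mathcal D,\tau)$: if $x\in H^{p}$ with $\Phi(x)=0$ and $a_{n}\in H^{\infty}$, $\|a_{n}-x\|_{p}\to 0$, then $a_{n}-\Phi(a_{n})\in H_{0}^{\infty}$ and $a_{n}-\Phi(a_{n})\to x$, so $x\in H_{0}^{p}$; conversely $\Phi$ vanishes on $H_{0}^{\infty}$, giving $H_{0}^{p}\subseteq\{x\in H^{p}:\Phi(x)=0\}$; and $\{x\in H^{p}:\Phi(x)=0\}=\{x:\tau(xy)=0\text{ for all }y\in H^{\infty}\}$ because $H^{\infty}=\mathcal D+H_{0}^{\infty}$, $\tau(xd)=\tau(\Phi(x)d)$ for $d\in\mathcal D$, and the pairing between $L^{1}(\mathcal D,\tau)$ and $\mathcal D$ is faithful.

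For the nontrivial inclusion in (2) I would first settle $p=2$ by a Hilbert-space decomposition. Since $H^{\infty}=\mathcal D\dotplus H_{0}^{\infty}$ (write $a=\Phi(a)+(a-\Phi(a))$), the weak*-density of $\mathcal A+\mathcal A^{*}=H^{\infty}+(H^{\infty})^{*}$ in $\mathcal M$ makes $H_{0}^{\infty}\dotplus\mathcal D\dotplus(H_{0}^{\infty})^{*}$ weak*-dense, hence $\|\cdot\|_{2}$-dense, in $\mathcal M$; and $\tau\circ\Phi=\tau$ together with multiplicativity of $\Phi$ on $H^{\infty}$ forces the three summands to be pairwise orthogonal in $L^{2}(\mathcal M,\tau)$. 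Hence $L^{2}(\mathcal M,\tau)=H_{0}^{2}\oplus L^{2}(\mathcal D,\tau)\oplus\overline{H_{0}^{2}}$ and $H^{2}=H_{0}^{2}\oplus L^{2}(\mathcal D,\tau)$, where $\overline{H_{0}^{2}}:=[(H_{0}^{\infty})^{*}]_{2}$. If $x\in L^{2}$ with $\tau(xy)=0$ for all $y\in H_{0}^{\infty}$, write $x=x_{1}+x_{2}+x_{3}$ along this decomposition; the same orthogonality computations give $\tau(x_{1}y)=\tau(x_{2}y)=0$ automatically, so $\tau(x_{3}y)=0$ for every $y\in H_{0}^{\infty}$, i.e. $\langle x_{3},y^{*}\rangle=0$; since $(H_{0}^{\infty})^{*}$ is dense in $\overline{H_{0}^{2}}\ni x_{3}$, this forces $x_{3}=0$, so $x\in H^{2}$.

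The remaining task --- bootstrapping from $p=2$ to all $1\le p\le\infty$ --- is, I expect, the main obstacle, and it carries essentially all of the technical weight. The trouble is that truncating $x$ by a spectral projection of $|x|$ or $|x^{*}|$ so as to land in $\mathcal M\cap L^{2}$ destroys the relation $\tau(xy)=0$, $y\in H_{0}^{\infty}$, because the truncating projections do not lie in $H^{\infty}$, and for a general finite maximal subdiagonal algebra there is no Fej\'er-kernel substitute. The standard remedy is the outer-factorization machinery of Arveson \cite{Arv} and Exel \cite{Exel} (refined by Saito \cite{Sai}): for a positive invertible $b\in\mathcal M$ one produces an outer $h\in H^{\infty}$ with $h^{-1}\in H^{\infty}$ and $|h|^{2}$ comparable to $b$, and uses such $h$ to implement the truncation inside $H^{\infty}\cdot H^{\infty}\subseteq H^{\infty}$. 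For $p\ge 2$ one first gets $x\in H^{2}\cap L^{p}$ from the case above and then upgrades the approximation to the $\|\cdot\|_{p}$-topology via an outer multiplier (the case $p=\infty$ additionally using that the unit ball of $H^{\infty}$ is appropriately dense); the range $1\le p<2$ then follows by Hahn--Banach duality, once one knows that the $\tau$-annihilator of $H^{\infty}$ in $L^{p'}$ is exactly $H_{0}^{p'}$ (the exponent $p'\ge 2$ being already settled), since then, writing $\eta=\lim_{n}b_{n}\in[H_{0}^{\infty}]_{p'}$, any $x$ annihilating $H_{0}^{\infty}$ satisfies $\tau(x\eta)=\lim_{n}\tau(xb_{n})=0$, whence $x\in[H^{\infty}]_{p}$. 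Once the outer/Szeg\H{o} input is in hand, (2) in all degrees --- and therefore (1) and (3) --- follow from the approximation and duality bookkeeping above.
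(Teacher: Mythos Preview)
The paper does not prove this proposition: it is quoted verbatim as a known result from Saito \cite{Sai} and used as a black box (see the attribution ``from \cite{Sai}'' in the statement and the absence of any proof following it). So there is no in-paper argument to compare your proposal against.

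That said, your outline is a reasonable reconstruction of the standard route. The $p=2$ case via the orthogonal decomposition $L^{2}=H_{0}^{2}\oplus L^{2}(\mathcal D,\tau)\oplus (H_{0}^{2})^{*}$ is exactly how one begins, and you are right that the substantive content lies in the passage to general $p$, which in Saito's original paper is handled through the inner--outer (Riesz/Szeg\H{o}) factorization available for finite maximal subdiagonal algebras. Your description of that step is accurate in spirit but remains a plan rather than a proof: the phrases ``outer multiplier'' and ``upgrades the approximation'' hide the actual work (constructing, for a given $x\in H^{2}\cap L^{p}$, elements of $H^{\infty}$ converging to $x$ in $\|\cdot\|_{p}$), and the duality reduction for $1\le p<2$ presupposes the annihilator identification $(H^{p})^{\perp}=H_{0}^{p'}$, which is essentially the statement you are trying to prove at the dual exponent. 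If you want a self-contained write-up, you should either reproduce Saito's factorization argument or invoke the Bekjan--Xu Riesz/Szeg\H{o} factorization \cite{BX} explicitly; otherwise, citing \cite{Sai} as the present paper does is the cleanest option.
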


Similarly, we have the following definition in $L^\alpha(\mathcal M,\tau)$ spaces.
\begin{definition}Suppose $\mathcal M$ is a finite von Neumann algebra with a faithful
normal tracial state $\tau$. Let $H^\infty$ be a  finite, maximal
subdiagonal subalgebra
  of $\mathcal M$.  Suppose $\alpha$ is a  normalized,
unitarily invariant, continuous, $\| \cdot\|_{1}$-dominating norm on
$\mathcal{M}.$

 For $\mathcal S\subseteq
L^\alpha(\mathcal M,\tau),$ let $[\mathcal S]_\alpha$ denote the
closure of $\mathcal S$ in $L^\alpha(\mathcal M,\tau)$ with respect
to the norm $\alpha$. In particular,  We define $H^{\alpha}$ to be
the $\alpha$-closure of $H^{\infty},$ i.e.,
\[
H^{\alpha}=[H^{\infty}]_{\alpha}.
\]
\end{definition}

\subsection{Characterizations of $H^\alpha$-spaces}
In this section, our object is to provide an analogue of Saito's
result stated in Proposition \ref{property of Hp} in the new setting
$H^{\alpha},$ where $\alpha$ is a normalized, unitarily invariant,
 $\| \cdot \|_{1}$-dominating, continuous norm on $\mathcal{M}.$

It is proved in  \cite{BX}, surprisingly, that the multiplication of the
conditional expectation $\Phi$ on $H^{\infty}$ extends to a
multiplication on $H^{p}$ for all $0<p<\infty.$

\begin{lemma}{\em (from \cite{BX})}
\label{multiplicative on Hardy space} The conditional expectation $\Phi$ is
multiplicative on Hardy spaces. More precisely, $\Phi(ab)=\Phi(a)\Phi(b)$ for all
$a\in H^{p}$ and $b\in H^{q}$ with $0<p, q\leq \infty.$
\end{lemma}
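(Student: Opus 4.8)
The plan is to deduce the identity from three ingredients: the defining relation $\Phi(xy)=\Phi(x)\Phi(y)$ for $x,y\in H^{\infty}$; the H\"older inequality $\|xy\|_{r}\le\|x\|_{p}\|y\|_{q}$ in the noncommutative $L^{p}$-spaces associated with $(\mathcal M,\tau)$, valid whenever $0<p,q\le\infty$ and $1/r=1/p+1/q$ (with the convention $1/\infty=0$); and the fact that $\Phi$ extends from $H^{\infty}$ to a $\|\cdot\|_{p}$-contraction $\Phi_{p}\colon H^{p}\to L^{p}(\mathcal D,\tau)$ for every $0<p\le\infty$. The first two are standard; the third I would single out as the substantive input, and its status is discussed in the last paragraph. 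Granting these, the lemma becomes a limiting argument, and as a byproduct one also gets $H^{p}\cdot H^{q}\subseteq H^{r}$.

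Assuming the extension $\Phi_{p}$, here is how I would run the argument. Fix $a\in H^{p}$ and $b\in H^{q}$, and choose $a_{n},b_{n}\in H^{\infty}$ with $\|a_{n}-a\|_{p}\to0$ and $\|b_{n}-b\|_{q}\to0$; note $\sup_{n}\|b_{n}\|_{q}<\infty$. Writing $a_{n}b_{n}-ab=(a_{n}-a)b_{n}+a(b_{n}-b)$ and applying H\"older to each summand, both terms tend to $0$ in $\|\cdot\|_{r}$, so $a_{n}b_{n}\to ab$ in $L^{r}(\mathcal M,\tau)$; since $a_{n}b_{n}\in H^{\infty}$ this shows $ab\in[H^{\infty}]_{r}=H^{r}$, and by continuity of $\Phi_{r}$ on $H^{r}$ we obtain $\Phi_{r}(ab)=\lim_{n}\Phi(a_{n}b_{n})=\lim_{n}\Phi(a_{n})\Phi(b_{n})$, the last equality by multiplicativity of $\Phi$ on $H^{\infty}$. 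On the other hand $\Phi(a_{n})\to\Phi_{p}(a)$ in $\|\cdot\|_{p}$ and $\Phi(b_{n})\to\Phi_{q}(b)$ in $\|\cdot\|_{q}$, so one further application of H\"older (using $\sup_{n}\|\Phi(b_{n})\|_{q}<\infty$) gives $\Phi(a_{n})\Phi(b_{n})\to\Phi_{p}(a)\Phi_{q}(b)$ in $\|\cdot\|_{r}$. Uniqueness of limits then forces $\Phi_{r}(ab)=\Phi_{p}(a)\Phi_{q}(b)$, i.e.\ $\Phi(ab)=\Phi(a)\Phi(b)$; when $r\ge1$ the map $\Phi_{r}$ is simply the restriction of the usual $L^{r}$-extension of $\Phi$, so there is no ambiguity of meaning.

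The heart of the matter is the extension in the third ingredient. For $1\le p\le\infty$ it is routine: a faithful normal conditional expectation is unital, completely positive, and trace preserving, hence a $\|\cdot\|_{1}$-contraction (as recalled in the introduction) and trivially a $\|\cdot\|_{\infty}$-contraction, so by complex interpolation it is $\|\cdot\|_{p}$-contractive for all $1\le p\le\infty$; density of $H^{\infty}$ in $H^{p}$ together with completeness of $L^{p}(\mathcal M,\tau)$ then produces $\Phi_{p}$, compatibly in $p$. The obstacle I expect is the range $0<p<1$: there $\Phi$ is \emph{not} bounded on all of $L^{p}(\mathcal M,\tau)$, so one must exploit that $H^{p}$ is a proper closed subspace on which $\Phi$ is controlled by the analytic structure --- the noncommutative avatar of the subharmonic mean-value inequality $|f(0)|^{p}\le\frac1{2\pi}\int|f(e^{i\theta})|^{p}\,d\theta$ for analytic $f$. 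This $\|\cdot\|_{p}$-contractivity of $\Phi$ on $H^{p}$ for $p<1$ is precisely what the $H^{p}$-theory of \cite{BX} supplies (alternatively one can obtain it from the Szeg\H{o}/outer-function factorization in $H^{p}$). I would add that the other uses of this lemma in the present paper only involve the range $p,q\ge1$, since every norm $\alpha$ considered is $\|\cdot\|_{1}$-dominating; in that range the argument above is elementary throughout.
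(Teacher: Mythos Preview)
The paper does not give its own proof of this lemma; it is simply quoted from \cite{BX}. Your reconstruction is correct and is the standard derivation: density of $H^{\infty}$ in $H^{p}$, H\"older, and continuity of $\Phi$ on each $H^{p}$ reduce the statement to multiplicativity on $H^{\infty}$, with the only substantive input being the $\|\cdot\|_{p}$-boundedness of $\Phi$ on $H^{p}$ for $0<p<1$, which is exactly what \cite{BX} provides via the Szeg\H{o}/Riesz-type factorization.

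One correction to your closing remark. You say the applications in this paper involve only $p,q\ge 1$ and that ``in that range the argument above is elementary throughout.'' The first part is true---the sole application, in Lemma~\ref{characterization2}, has $x\in H^{1}$ and $y\in H^{1}_{0}$---but the second is not: your argument still needs continuity of $\Phi$ on $H^{r}$ with $1/r=1/p+1/q$, and here $r=1/2<1$, which is the nontrivial regime. What \emph{is} true is that the specific application can be handled by a different, elementary route: since there one also has $x\in L^{\alpha}$ and $y\in L_{\overline{\alpha'}}$, approximate $x$ by $x_{n}\in H^{\infty}$ in the $\alpha$-norm, note $x_{n}y\in H^{\infty}H^{1}_{0}\subseteq H^{1}_{0}$ so that $\tau(x_{n}y)=0$, and pass to the limit in $L^{1}$ via Theorem~\ref{thm3.11}. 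So your instinct that the paper could avoid the $p<1$ theory is right, but the reason is an alternative argument rather than your density scheme becoming elementary.
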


Next we will prove two lemmas before we state the main result of the
section.

\begin{lemma}
\label{charcterization1} Let $\mathcal M$ be a finite von Neumann
algebra with a faithful normal tracial state $\tau$, and  $H^\infty$
be a  finite, maximal subdiagonal subalgebra
  of $\mathcal M$.   Let
$\alpha$ be a normalized, unitarily invariant, $\|\cdot
\|_1$-dominating, continuous norm on $\mathcal M$ (see Definition
\ref{def2.2}).   Let $L_{\overline {\alpha'}}(\mathcal M,\tau)$ be
as defined {in Definition \ref{def2.10}}.

Then
\[
H^{\alpha}=\{x\in L^{\alpha}(\mathcal{M},\tau): \tau(xy)=0 \  \mbox{for all } y\in
H_{0}^{1}\cap L_{\overline {\alpha'}}(\mathcal M,\tau) \}.
\]

\end{lemma}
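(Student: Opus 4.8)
The plan is to establish the two set inclusions. Denote by $\mathcal R$ the right-hand side, $\{x\in L^{\alpha}(\mathcal{M},\tau): \tau(xy)=0 \text{ for all } y\in H_{0}^{1}\cap L_{\overline{\alpha'}}(\mathcal M,\tau)\}$.

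First I would prove $H^{\alpha}\subseteq \mathcal R$ by a direct approximation argument. Given $x\in H^{\alpha}=[H^{\infty}]_{\alpha}$, pick $x_{n}\in H^{\infty}$ with $\alpha(x_{n}-x)\to 0$. For $y\in H_{0}^{1}\cap L_{\overline{\alpha'}}(\mathcal M,\tau)$, part (3) of Proposition \ref{property of Hp} (applied with $p=1$) gives $\tau(yx_{n})=0$ since $x_{n}\in H^{\infty}$, hence $\tau(x_{n}y)=0$ by the traciality of $\tau$ on $L^{1}(\mathcal M,\tau)$. Since $y\in L_{\overline{\alpha'}}(\mathcal M,\tau)$ and $x_{n}-x\in L^{\alpha}(\mathcal M,\tau)$, the generalized H\"{o}lder inequality (Theorem \ref{thm3.11}) yields $|\tau(x_{n}y)-\tau(xy)|=|\tau((x_{n}-x)y)|\le \alpha(x_{n}-x)\,\overline{\alpha'}(y)\to 0$, so $\tau(xy)=\lim_{n}\tau(x_{n}y)=0$. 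As $y$ was arbitrary, $x\in\mathcal R$.

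The reverse inclusion $\mathcal R\subseteq H^{\alpha}$ is where the duality theory comes in, and it is the step I expect to be the heart of the matter. Suppose $x\in\mathcal R$ but $x\notin H^{\alpha}$. Since $H^{\alpha}$ is a norm-closed subspace of $L^{\alpha}(\mathcal{M},\tau)$, the Hahn-Banach theorem provides $\phi\in (L^{\alpha}(\mathcal{M},\tau))^{\sharp}$ with $\phi|_{H^{\alpha}}=0$ and $\phi(x)\ne 0$. By Theorem \ref{dualSpace} there is $\xi\in L_{\overline{\alpha'}}(\mathcal M,\tau)$ with $\phi(z)=\tau(z\xi)$ for every $z\in L^{\alpha}(\mathcal{M},\tau)$; moreover $\xi\in L^{1}(\mathcal M,\tau)$ since $\|\xi\|_{1}\le \overline{\alpha'}(\xi)<\infty$ by Proposition \ref{property of beta}. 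Because $H^{\infty}\subseteq H^{\alpha}$, we have $\tau(a\xi)=\phi(a)=0$ for all $a\in H^{\infty}$, hence $\tau(\xi a)=0$ for all $a\in H^{\infty}$, and Proposition \ref{property of Hp}(3) (with $p=1$) forces $\xi\in H_{0}^{1}$. Thus $\xi\in H_{0}^{1}\cap L_{\overline{\alpha'}}(\mathcal M,\tau)$, so the defining property of $\mathcal R$ gives $\phi(x)=\tau(x\xi)=0$, contradicting $\phi(x)\ne 0$. Therefore $x\in H^{\alpha}$.

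In short, once Theorem \ref{dualSpace} (the identification $(L^{\alpha}(\mathcal{M},\tau))^{\sharp}=L_{\overline{\alpha'}}(\mathcal M,\tau)$) and Saito's description of $H_{0}^{1}$ in Proposition \ref{property of Hp} are in hand, the argument is a clean Hahn-Banach separation; the only point requiring a moment's attention is that the separating functional, realized as integration against $\xi\in L_{\overline{\alpha'}}(\mathcal M,\tau)\subseteq L^{1}(\mathcal M,\tau)$, annihilates the whole algebra $H^{\infty}$ (and not merely an $\alpha$-dense part of it), which is immediate since $\phi$ vanishes on all of the $\alpha$-closed space $H^{\alpha}$.
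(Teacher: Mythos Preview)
Your proof is correct and follows essentially the same route as the paper's. The only cosmetic difference is that the paper establishes $H^{\alpha}\subseteq\mathcal R$ by first showing $H^{\infty}\subseteq\mathcal R$ and then checking $\mathcal R$ is $\alpha$-closed (via Theorem~\ref{thm3.11}), whereas you fold these two steps into a single approximation argument; the Hahn--Banach separation for the reverse inclusion is identical, and your citation of Theorem~\ref{dualSpace} (rather than Proposition~\ref{dual space}) is in fact the cleaner reference since you need $\phi(x)=\tau(x\xi)$ for $x\in L^{\alpha}(\mathcal M,\tau)$, not merely for $x\in\mathcal M$.
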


\begin{proof}
Let $$X=\{x\in L^{\alpha}(\mathcal{M},\tau): \tau(xy)=0 \  \mbox{for
all } y\in H_{0}^{1} \cap L_{\overline {\alpha'}}(\mathcal
M,\tau)\}.$$

 Suppose $x\in H^{\infty}.$ If $y\in H_{0}^{1}\cap
L_{\overline {\alpha'}}(\mathcal M,\tau)\subseteq H_{0}^{1},$ then
it follows from part (3) of Proposition \ref{property of Hp} that
$\tau(xy)=0,$ which implies $x\in X,$ and so $H^{\infty }\subseteq
X.$

We claim  that $X$ is $\alpha$-closed in
$L^{\alpha}(\mathcal{M},\tau).$ In fact, suppose  $\{x_{n}\}$ is  a sequence in $X$ and $x\in
L^{\alpha}(\mathcal{M},\tau)$ such that $\alpha(x_{n}-x)\rightarrow 0.$ If $y\in
H_{0}^{1}\cap L_{\overline {\alpha'}}(\mathcal M,\tau),$ then by the
generalized  H\"{o}lder's inequality (Theorem \ref{thm3.11}), we
have
\[
|\tau(xy)-\tau(x_{n}y)|=|\tau((x-x_{n})y)|\leq \alpha(x-x_{n})\overline{\alpha^\prime}(y)\rightarrow0.
\]
Since $x_n\in X$ for all $n\in \mathbb N,$ it follows that
$\tau(xy)=\lim_{n\rightarrow \infty}\tau(x_{n}y)=0 $ for all  $y\in
H_{0}^{1}\cap L_{\overline {\alpha'}}(\mathcal M,\tau) $. By the
definition of $X$, we know that $x\in X.$ Hence $X$ is closed in
$L^{\alpha}(\mathcal{M},\tau)$. Therefore
\[
H^{\alpha}=[{H^{\infty}}]_{\alpha}\subseteq X.
\]

Next, we show that $H^{\alpha}=  X$. Assume, via contradiction,
that $H^{\alpha}\subsetneqq X\subseteq
L^{\alpha}(\mathcal{M},\tau).$ By the Hahn-Banach Theorem, there is
a $\phi \in(L^{\alpha}(\mathcal M,\tau))^{\sharp}$ and $x\in X$ such
that
\begin{enumerate}
\item [(i)] $\phi(x)\neq0,$ and
\item [(ii)] $\phi(y)=0$ for all $y\in H^{\alpha}.$
\end{enumerate}
Since $\alpha$ is a normalized, unitarily invariant, $\|\cdot
\|_1$-dominating, continuous norm on $\mathcal M$, it follows from
Proposition \ref{dual space} that there exists a $\xi \in
L_{\overline {\alpha'}}(\mathcal M,\tau)$ such that
\begin{enumerate}
\item [(iii)]
$\phi (z)=\tau(z\xi)$ for all $z\in
L^{\alpha}(\mathcal{M},\tau).$
\end{enumerate}
Hence from (ii) and (iii)  we can conclude that
\begin{enumerate}
\item [(iv)] $\tau(y\xi)=\phi(y)=0 $ for every $y\in H^\infty\subseteq H^\alpha\subseteq
L^\alpha(\mathcal M, \tau).$ \end{enumerate}

 Since $\xi\in
L_{\overline {\alpha'}}(\mathcal M,\tau)\subseteq L^1(\mathcal M,
\tau),$ it follows from part $(3)$ of
Proposition \ref{property of Hp} and  (iv) as above that $\xi \in H_{0}^{1},$ which means $\xi \in H_{0}%
^{1}\cap L_{\overline {\alpha'}}(\mathcal M,\tau).$ Combining with
the fact that $x\in X=\{x\in L^{\alpha}(\mathcal{M},\tau):
\tau(xy)=0 \ \mbox{for all } y\in H_{0}^{1} \cap L_{\overline
{\alpha'}}(\mathcal M,\tau)\},$ we obtain that $\tau(x\xi)=0.$ Note, again,
that $x\in X\subseteq L^\alpha(\mathcal M,\tau).$ From (i) and
(iii), it follows that $\tau(x\xi)=\phi(x)\neq0.$ This is a
contradiction. Therefore
\[
H^{\alpha}=X=\{x\in L^{\alpha}(\mathcal{M},\tau): \tau(xy)=0 \  \mbox{for all } y\in
H_{0}^{1}\cap L_{\overline {\alpha'}}(\mathcal M,\tau) \}.
\]

\end{proof}

\begin{lemma}
\label{characterization2} Let $\mathcal M$ be a finite von Neumann
algebra with a faithful normal tracial state $\tau$, and  $H^\infty$
be a  finite, maximal subdiagonal subalgebra
  of $\mathcal M$.  Let $\alpha$ be a normalized, unitarily invariant, $\|\cdot \|_1$-dominating, continuous norm on $\mathcal M$ (see
Definition \ref{def2.2}).   Let  $L_{\overline {\alpha'}}(\mathcal
M,\tau)$ be as defined {in Definition \ref{def2.10}}.  Then
\[
H^{1}\cap L^{\alpha}(\mathcal{M},\tau)=\{x\in L^{\alpha}(\mathcal{M},\tau): \tau(xy)=0
\  \mbox{for all } y\in H_{0}^{1}\cap L_{\overline {\alpha'}}(\mathcal M,\tau) \}.
\]

\end{lemma}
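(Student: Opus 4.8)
The statement is an equality of two subsets of $L^{\alpha}(\mathcal{M},\tau)$; write $X$ for the right-hand set. The plan is to prove the two inclusions separately, the inclusion $X\subseteq H^{1}\cap L^{\alpha}(\mathcal{M},\tau)$ being routine and the reverse one carrying the content. For the first inclusion I would begin by recording that $\mathcal M\subseteq L_{\overline{\alpha'}}(\mathcal M,\tau)$: for $y\in\mathcal M$, Lemma \ref{lemma3.2} together with part (i) of Lemma \ref{lemma2.6} gives $\overline{\alpha'}(y)=\alpha'(y)\le\|y\|<\infty$, so in particular $H_{0}^{\infty}\subseteq H_{0}^{1}\cap L_{\overline{\alpha'}}(\mathcal M,\tau)$. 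Now let $x\in X$; regarding $x$ as an element of $L^{1}(\mathcal M,\tau)$ via the isometric embedding $L^{\alpha}(\mathcal M,\tau)\hookrightarrow L_{\overline{\alpha}}(\mathcal M,\tau)\subseteq L^{1}(\mathcal M,\tau)$ of Proposition \ref{prop3.10}, the defining property of $X$ yields $\tau(xy)=0$ for every $y\in H_{0}^{\infty}$, so part (2) of Proposition \ref{property of Hp} (with $p=1$) forces $x\in H^{1}$, hence $x\in H^{1}\cap L^{\alpha}(\mathcal M,\tau)$. (One could instead deduce this inclusion from Lemma \ref{charcterization1}, which identifies $X$ with $H^{\alpha}=[H^{\infty}]_{\alpha}$, since $\alpha$-convergence forces $\|\cdot\|_{1}$-convergence.)

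For the reverse inclusion, take $x\in H^{1}\cap L^{\alpha}(\mathcal M,\tau)$ and $y\in H_{0}^{1}\cap L_{\overline{\alpha'}}(\mathcal M,\tau)$; the task is to show $\tau(xy)=0$. The generalized H\"older inequality, Theorem \ref{thm3.11}, puts $xy$ in $L^{1}(\mathcal M,\tau)$, so $\tau(xy)$ is defined and, since $\tau\circ\Phi=\tau$ on $L^{1}(\mathcal M,\tau)$, equals $\tau(\Phi(xy))$. Since $x$ and $y$ both lie in $H^{1}$, Lemma \ref{multiplicative on Hardy space} (with $p=q=1$) gives $\Phi(xy)=\Phi(x)\Phi(y)$, and because $y\in H_{0}^{1}$, part (3) of Proposition \ref{property of Hp} gives $\Phi(y)=0$; hence $\Phi(xy)=0$ and $\tau(xy)=0$. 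Thus $x\in X$, and the proof is complete.

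The one place where I would be careful — and which I expect to be the only genuine obstacle — is the appeal to Lemma \ref{multiplicative on Hardy space} at $p=q=1$: a priori $xy$ belongs only to $H^{1/2}$, and the identity $\Phi(xy)=\Phi(x)\Phi(y)$ is then an identity in $L^{1/2}(\mathcal D,\tau)$, a space carrying no trace. This is harmless because Theorem \ref{thm3.11} has independently placed $xy$ in $L^{1}(\mathcal M,\tau)$; the extensions of $\Phi$ to the spaces $L^{r}$ all restrict to $\Phi|_{\mathcal M}$ and hence are compatible on their common domain, so $\Phi(xy)$ is an unambiguous element of $L^{1}(\mathcal D,\tau)$, and the computation above shows it is $0$. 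Everything else amounts to bookkeeping with the embeddings $L^{\alpha}(\mathcal M,\tau)\hookrightarrow L^{1}(\mathcal M,\tau)$ and $\mathcal M\hookrightarrow L_{\overline{\alpha'}}(\mathcal M,\tau)$ and with Saito's characterizations recalled in Proposition \ref{property of Hp}; note also that, combined with Lemma \ref{charcterization1}, this lemma immediately yields $H^{\alpha}=H^{1}\cap L^{\alpha}(\mathcal M,\tau)$.
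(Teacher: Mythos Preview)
Your proof is correct and follows essentially the same route as the paper's own argument: both inclusions are handled identically, with $H_{0}^{\infty}\subseteq H_{0}^{1}\cap L_{\overline{\alpha'}}(\mathcal M,\tau)$ together with Proposition~\ref{property of Hp}(2) for $X\subseteq H^{1}\cap L^{\alpha}$, and the combination of Theorem~\ref{thm3.11}, Lemma~\ref{multiplicative on Hardy space}, and $\tau=\tau\circ\Phi$ for the reverse inclusion. Your careful remark about the $L^{1/2}$ issue mirrors exactly the paper's own treatment of that point.
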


\begin{proof}
Let $$X=\{x\in L^{\alpha}(\mathcal{M},\tau): \tau(xy)=0 \  \mbox{for
all } y\in H_{0}^{1} \cap L_{\overline {\alpha'}}(\mathcal
M,\tau)\}.$$ It is clear that $X\subseteq L^{\alpha}(\mathcal{M},\tau).$

Now we suppose $  x\in X$, that is  $x\in
L^{\alpha}(\mathcal{M},\tau)$ such that $\tau(xy)=0 $ for all $y\in
H_{0}^{1} \cap L_{\overline {\alpha'}}(\mathcal M,\tau) .$ Since
$H_{0}^{\infty}\subseteq H^{\infty}\subseteq{\mathcal{M}}\subseteq
L_{\overline {\alpha'}}(\mathcal M,\tau) $ and
$H_{0}^{\infty}\subseteq H_{0}^{1},$ it follows that $\tau(xy)=0$
for all $y\in H_{0}^{\infty}.$ Then by part (2) of Proposition
\ref{property of Hp}, $x\in H^{1},$ which implies $X\subseteq
H^1\cap L^\alpha(\mathcal M, \tau).$

To prove $ H^1\cap L^\alpha(\mathcal M, \tau)\subseteq X,$ suppose
$x\in H^{1}\cap L^{\alpha }(\mathcal{M},\tau ).$  Then $x\in
L^\alpha(\mathcal M, \tau).$ Assume that  $y\in H_{0}^{1}\cap
L_{\overline {\alpha'}}(\mathcal M,\tau).$ So $\Phi(y)=0.$ Note that
$xy\in H^1H^1_0\subseteq H^{1/2}$. From Lemma \ref{multiplicative on Hardy space}, we
know that $\Phi(xy)$  is   in $L^{1/2}(\mathcal D,\tau)$ (see Theorem 2.1 in \cite {BX}) and
$\Phi(xy)=\Phi(x)\Phi(y)=0.$ Moreover, since $x \in
L^{\alpha}(\mathcal{M},\tau)$ and $y\in  L_{\overline
{\alpha'}}(\mathcal M,\tau)$, it induces from Theorem \ref {thm3.11}
that  $xy\in  L^{1}(\mathcal{M},\tau),$ whence $\Phi(xy)$ is also in
$L^{1}(\mathcal{M},\tau)$.  Thus $\tau(xy)$ is well defined and
$\tau(xy)=\tau(\Phi(xy))=0.$ By the definition of $X$, we conclude
that $x\in X.$ Therefore $H^{1}\cap
L^{\alpha}(\mathcal{M},\tau)\subseteq X$.

Now we can obtain that
$$H^{1}\cap L^{\alpha}(\mathcal{M},\tau)= \{x\in
L^{\alpha}(\mathcal{M},\tau): \tau(xy)=0 \  \mbox{for all } y\in
H_{0}^{1}\cap L_{\overline {\alpha'}}(\mathcal M,\tau) \}.$$
\end{proof}

The following theorem gives   a   characterization of
$H^\alpha .$

\begin{theorem}
\label{characterization of H^alpha}Let $\mathcal M$ be a finite von
Neumann algebra with a faithful normal tracial state $\tau$, and
$H^\infty$ be a  finite, maximal subdiagonal subalgebra
  of $\mathcal M$. Let $\alpha$ be a normalized, unitarily invariant, $\|\cdot \|_1$-dominating, continuous norm on $\mathcal M$.  Then
\[
H^{\alpha}=H^{1}\cap L^{\alpha}(\mathcal{M},\tau)=\{x\in L^{\alpha}(\mathcal{M},\tau):
\tau(xy)=0 \  \mbox{for all } y\in H_{0}^{\infty} \}.
\]

\end{theorem}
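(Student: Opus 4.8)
The plan is to prove the two claimed equalities by combining the two preceding lemmas with Saito's characterization of $H^1$. I would argue as follows. First, observe that Lemma~\ref{charcterization1} and Lemma~\ref{characterization2} together already give
\[
H^{\alpha}=\{x\in L^{\alpha}(\mathcal{M},\tau): \tau(xy)=0 \text{ for all } y\in H_{0}^{1}\cap L_{\overline{\alpha'}}(\mathcal M,\tau)\}=H^{1}\cap L^{\alpha}(\mathcal{M},\tau),
\]
since both lemmas identify their respective spaces with the \emph{same} set $X$. So the only remaining work is to show this common set equals $\{x\in L^{\alpha}(\mathcal{M},\tau): \tau(xy)=0 \text{ for all } y\in H_{0}^{\infty}\}$.

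For that last identification, I would argue both inclusions. One direction is immediate: if $\tau(xy)=0$ for all $y\in H_0^1\cap L_{\overline{\alpha'}}(\mathcal M,\tau)$, then since $H_0^\infty\subseteq H_0^1$ and $H_0^\infty\subseteq \mathcal M\subseteq L_{\overline{\alpha'}}(\mathcal M,\tau)$ (the latter by part (iii$_2$) of Proposition~\ref{property of beta}), we get $H_0^\infty\subseteq H_0^1\cap L_{\overline{\alpha'}}(\mathcal M,\tau)$, hence $\tau(xy)=0$ for all $y\in H_0^\infty$. For the reverse inclusion, suppose $x\in L^{\alpha}(\mathcal{M},\tau)$ satisfies $\tau(xy)=0$ for all $y\in H_0^\infty$; by part (2) of Proposition~\ref{property of Hp} this means exactly that $x\in H^1$, so $x\in H^1\cap L^\alpha(\mathcal M,\tau)$, and then Lemma~\ref{characterization2} places $x$ in $X$. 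This closes the loop.

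I would present the proof compactly: cite Lemma~\ref{charcterization1} for the first equality $H^\alpha = X$, cite Lemma~\ref{characterization2} for the second equality $H^1\cap L^\alpha(\mathcal M,\tau)=X$, and then insert the short two-inclusion argument above (using Proposition~\ref{property of Hp}(2) and the fact $\mathcal M\subseteq L_{\overline{\alpha'}}(\mathcal M,\tau)$) to reconcile $X$ with the third set. No serious obstacle is expected here — the theorem is essentially a bookkeeping consequence of the two lemmas that precede it; the only point requiring a moment's care is verifying $\mathcal M\subseteq L_{\overline{\alpha'}}(\mathcal M,\tau)$, which follows since $\overline{\alpha'}(x)=\alpha'(x)\le \|x\|<\infty$ for $x\in\mathcal M$ by Lemma~\ref{lemma3.2} and Lemma~\ref{lemma2.6}(i).
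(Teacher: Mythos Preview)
Your proposal is correct and matches the paper's approach exactly: the paper's proof is simply the one-line statement that the result follows directly from Lemma~\ref{charcterization1}, Lemma~\ref{characterization2}, and Proposition~\ref{property of Hp}. Your write-up just unpacks this, and in fact the third equality can be obtained even more directly by noting that Proposition~\ref{property of Hp}(2) gives $H^1\cap L^\alpha(\mathcal M,\tau)=\{x\in L^\alpha(\mathcal M,\tau):\tau(xy)=0\text{ for all }y\in H_0^\infty\}$ immediately, since $L^\alpha(\mathcal M,\tau)\subseteq L^1(\mathcal M,\tau)$.
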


\begin{proof}
The result follows directly from Lemma \ref{charcterization1}, Lemma
\ref{characterization2} and Proposition \ref{property of Hp}.
\end{proof}

\section{Beurling's invariant subspace theorem}

In this section, we extend  the  classical Beurling's theorem  to
Arveson's non-commutative   Hardy spaces associated with unitary
invariant norms. 



\subsection {A factorization result} In \cite{Sai},  Saito proved the following useful
factorization theorem.

\begin{lemma}
\label{decomposition} {\em (from \cite{Sai}) }Suppose $\mathcal{M}$
is a finite von Neumann algebra with a faithful normal tracial state
$\tau $, and  $H^\infty$  be a finite, maximal subdiagonal
subalgebra
  of $\mathcal M$. If $k\in \mathcal{M}$ and $k^{-1}\in
L^{2}(\mathcal{M},\tau),$ then there are unitary operators $u_{1},
u_{2}\in \mathcal{M}$ and operators $a_{1}, a_{2}\in H^{\infty}$
such that $k=u_{1}a_{1}=a_{2}u_{2}$ and $a_{1}^{-1}, a_{2}^{-1}\in
H^{2}.$
\end{lemma}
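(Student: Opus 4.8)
The plan is to produce $k$ from the \emph{outer factorization} of $k^{*}k$ inside the Hilbert space $L^{2}(\mathcal M,\tau)$, the non-commutative analogue of the classical inner--outer factorization. The first step is to see that the hypotheses force the Fuglede--Kadison determinant $\Delta(k^{*}k)=\exp\tau(\log k^{*}k)$ to be strictly positive. Indeed, $k^{*}k$ is a positive element of $\mathcal M$, and since $k^{-1}\in L^{2}(\mathcal M,\tau)$ we get $(k^{*}k)^{-1}=k^{-1}(k^{-1})^{*}\in L^{1}(\mathcal M,\tau)$; applying the spectral calculus together with the elementary inequalities $-\log t<1/t$ and $\log t\le t$ ($t>0$) shows that the negative part of $\log k^{*}k$ is dominated by $(k^{*}k)^{-1}\in L^{1}$ and its positive part by $k^{*}k\in\mathcal M$. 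Hence $\log k^{*}k\in L^{1}(\mathcal M,\tau)$ and $\Delta(k^{*}k)>0$.

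Next I would invoke the non-commutative Szeg\H{o}/outer factorization theorem of Arveson and Exel (\cite{Arv}, \cite{Exel}): since $k^{*}k$ is a positive element of $L^{1}(\mathcal M,\tau)$ with $\Delta(k^{*}k)>0$, there is an outer $g\in H^{2}$ with $g^{*}g=k^{*}k$. Then $|g|=(g^{*}g)^{1/2}=|k|\in\mathcal M$, so in the polar decomposition $g=v_{g}|k|$ the operator $g$ is a product of two elements of $\mathcal M$, whence $g\in H^{2}\cap\mathcal M=H^{\infty}$ (Proposition~\ref{property of Hp}(1)). Moreover $(g^{*}g)^{-1}=(k^{*}k)^{-1}\in L^{1}$, so $g$ is injective, and, $\mathcal M$ being finite, $v_{g}$ is a unitary in $\mathcal M$; likewise the partial isometry $v_{k}$ in the polar decomposition $k=v_{k}|k|$ is unitary. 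Taking $u_{1}=v_{k}v_{g}^{*}$ and $a_{1}=g$ we obtain $k=v_{k}|k|=v_{k}v_{g}^{*}g=u_{1}a_{1}$ with $u_{1}\in\mathcal M$ unitary and $a_{1}\in H^{\infty}$.

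It remains to check $a_{1}^{-1}=g^{-1}\in H^{2}$, and here outerness does the real work (some such hypothesis is essential). First, $g^{-1}\in L^{2}$ because $\|g^{-1}\|_{2}^{2}=\tau((k^{*}k)^{-1})=\|k^{-1}\|_{2}^{2}<\infty$. Outerness of $g$ gives $1\in H^{2}=[gH^{\infty}]_{2}$, so there are $b_{n}\in H^{\infty}$ with $\|gb_{n}-1\|_{2}\to0$; since $b_{n}-g^{-1}=g^{-1}(gb_{n}-1)$, the generalized H\"older inequality (Theorem~\ref{thm3.11}) gives $\|b_{n}-g^{-1}\|_{1}\le\|g^{-1}\|_{2}\,\|gb_{n}-1\|_{2}\to0$. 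As $\tau(yb_{n})=0$ for every $y\in H_{0}^{\infty}$ (Proposition~\ref{property of Hp}(3)), letting $n\to\infty$ yields $\tau(g^{-1}y)=\tau(yg^{-1})=0$ for all $y\in H_{0}^{\infty}$, hence $g^{-1}\in H^{1}\cap L^{2}=H^{2}$ by Proposition~\ref{property of Hp}. Finally the factorization $k=a_{2}u_{2}$ follows by applying the statement just proved to $k^{*}\in\mathcal M$ (for which $(k^{*})^{-1}=(k^{-1})^{*}\in L^{2}$) relative to the maximal subdiagonal algebra $(H^{\infty})^{*}$ — which is again a finite maximal subdiagonal subalgebra of $\mathcal M$ with the same diagonal $\mathcal D$, and whose associated $H^{2}$ is $(H^{2})^{*}$ — and then taking adjoints.

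I expect the main obstacle to be the clean invocation (or, for a self-contained account, the proof) of the non-commutative Szeg\H{o} theorem yielding an \emph{outer} square root of $k^{*}k$ in $H^{2}$; everything else is bookkeeping with polar decompositions and the duality/H\"older apparatus already developed. An alternative route avoiding Szeg\H{o}: set $\mathcal W=[kH^{\infty}]_{2}$, a closed $H^{\infty}$-right-invariant subspace of $L^{2}(\mathcal M,\tau)$, and apply the $L^{2}$ case of the Blecher--Labuschagne--Beurling theorem (Lemma~\ref{invariant on L^p}) to write $\mathcal W=\mathcal Z\bigoplus^{col}\big(\bigoplus^{col}_{i}u_{i}H^{2}\big)$; one then uses $k^{-1}\in L^{2}$ to rule out the type $2$ part $\mathcal Z$ and to force a single partial isometry $u_{1}$, which must be unitary. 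With $\mathcal W=u_{1}H^{2}$ one gets $a_{1}=u_{1}^{*}k\in H^{2}\cap\mathcal M=H^{\infty}$ at once, and $a_{1}^{-1}\in H^{2}$ follows exactly as above after checking that $a_{1}$ is outer.
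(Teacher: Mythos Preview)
The paper does not prove this lemma at all: it is quoted from Saito~\cite{Sai} and used as a black box, so there is no in-paper argument to compare against.

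Your proposal is correct. The Szeg\H{o}/outer-factorization route works as written: $k^{-1}\in L^{2}$ gives $(k^{*}k)^{-1}\in L^{1}$, hence $\log(k^{*}k)\in L^{1}$ and $\Delta(k^{*}k)>0$; an outer $g\in H^{2}$ with $g^{*}g=k^{*}k$ then satisfies $|g|=|k|\in\mathcal M$, the partial isometry in its polar decomposition is unitary (finite algebra, trivial kernel), so $g\in H^{2}\cap\mathcal M=H^{\infty}$ by Proposition~\ref{property of Hp}; and your $L^{1}$-approximation argument using outerness plus Proposition~\ref{property of Hp} correctly places $g^{-1}$ in $H^{1}\cap L^{2}=H^{2}$. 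The adjoint trick for the second factorization is fine. One caveat on attribution: the Riesz--Szeg\H{o} factorization in the generality you need (arbitrary diagonal $\mathcal D$) was not fully available in Arveson~\cite{Arv} or Exel~\cite{Exel}; it was completed later (Labuschagne, Blecher--Labuschagne), so the citation should be adjusted in a self-contained write-up.

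Your alternative route --- take $\mathcal W=[kH^{\infty}]_{2}$ and apply an $L^{2}$ Beurling-type theorem --- is in fact much closer to Saito's original 1979 argument, which predates both the general Szeg\H{o} factorization and Lemma~\ref{invariant on L^p}. If you pursue that line, you should invoke the $L^{2}$ invariant-subspace theory available at the time (e.g.\ the Hilbert-space arguments in \cite{MMS}, or a direct wandering-subspace computation) rather than the full Blecher--Labuschagne theorem, since the latter's proof in \cite{BL2} itself relies on Saito-type factorization, and within the present paper Lemma~\ref{decomposition} is logically upstream of Lemma~\ref{invariant on L^p}.
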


We shall show that in fact it is possible to choose $a_{1}$ and
$a_{2}$ with their inverses   in $H^{\alpha}.$

\begin{proposition}
\label{decomposition in L^alpha} Let $\mathcal M$ be a finite von
Neumann algebra with a faithful normal tracial state $\tau $, and
$H^\infty$  be a finite, maximal subdiagonal subalgebra
  of $\mathcal M$.  Let
$\alpha$ be a normalized, unitarily invariant, $\|\cdot
\|_1$-dominating, continuous norm on $\mathcal M$. If $k\in
\mathcal{M}$ and $k^{-1}\in L^{\alpha}(\mathcal{M},\tau),$ then
there are unitary operators $w_{1}, w_{2}\in \mathcal{M}$ and
operators $a_{1}, a_{2}\in H^{\infty}$ such that
$k=w_{1}a_{1}=a_{2}w_{2}$ and $a_{1}^{-1}, a_{2}^{-1}\in
H^{\alpha}.$
\end{proposition}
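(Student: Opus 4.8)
The plan is to run Saito's argument for Lemma \ref{decomposition}, keeping its factorization step but drawing a stronger conclusion at the end. Since $\alpha$ is $\|\cdot\|_1$-dominating we have $L^{\alpha}(\mathcal M,\tau)\subseteq L^{1}(\mathcal M,\tau)$, so $k^{-1}\in L^{1}(\mathcal M,\tau)$; writing the polar decomposition $k=vh$ with $v\in\mathcal M$ unitary and $h=|k|\in\mathcal M_{+}$ (invertible as an affiliated operator, $h^{-1}=k^{-1}v\in L^{\alpha}(\mathcal M,\tau)$), and using the pointwise bound $-\log h\le h^{-1}$ together with $-\log h\ge-\log\|k\|$, we get $\log|k|\in L^{1}(\mathcal M,\tau)$, hence $\tau(\log|k|)>-\infty$. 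By the non-commutative Szeg\H{o}/outer factorization — which needs only $\tau(\log|k|)>-\infty$ and no $L^{2}$-hypothesis (this is precisely the part of Saito's proof of Lemma \ref{decomposition} that precedes the use of $k^{-1}\in L^{2}$; see also \cite{Arv}, \cite{Exel}) — there are unitaries $w_{1},w_{2}\in\mathcal M$ and outer elements $a_{1},a_{2}\in H^{\infty}$ (i.e. $[a_{1}H^{\infty}]_{1}=H^{1}$ and $[H^{\infty}a_{2}]_{1}=H^{1}$) with $k=w_{1}a_{1}=a_{2}w_{2}$.

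Granting this factorization, I would upgrade the control on the inverses as follows. Since $L^{\alpha}(\mathcal M,\tau)$ is an $\mathcal M$-bimodule (Lemma \ref{lemma2.3}), $a_{1}^{-1}=k^{-1}w_{1}\in L^{\alpha}(\mathcal M,\tau)$ and $a_{2}^{-1}=w_{2}k^{-1}\in L^{\alpha}(\mathcal M,\tau)$. To place $a_{1}^{-1}$ in $H^{\alpha}$, use that $a_{1}$ is outer: pick $c_{n}\in H^{\infty}$ with $\|a_{1}c_{n}-1\|_{1}\to0$; then $c_{n}-a_{1}^{-1}=a_{1}^{-1}(a_{1}c_{n}-1)$ and H\"older's inequality give $\|c_{n}-a_{1}^{-1}\|_{1/2}\le\|a_{1}^{-1}\|_{1}\|a_{1}c_{n}-1\|_{1}\to0$, so $a_{1}^{-1}\in[H^{\infty}]_{1/2}=H^{1/2}$. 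Combining $a_{1}^{-1}\in H^{1/2}$ with $a_{1}^{-1}\in L^{\alpha}(\mathcal M,\tau)\subseteq L^{1}(\mathcal M,\tau)$ and the fact $H^{1/2}\cap L^{1}(\mathcal M,\tau)=H^{1}$ (the analogue below exponent $1$ of Proposition \ref{property of Hp}(1), available from the Riesz-type factorization results of \cite{BX}), we obtain $a_{1}^{-1}\in H^{1}$, hence $a_{1}^{-1}\in H^{1}\cap L^{\alpha}(\mathcal M,\tau)=H^{\alpha}$ by Theorem \ref{characterization of H^alpha}. The same argument on the other side (using $[H^{\infty}a_{2}]_{1}=H^{1}$ and left multiplication) gives $a_{2}^{-1}\in H^{\alpha}$, which completes the proof.

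The step I expect to be the main obstacle is the factorization itself, namely being sure that the Szeg\H{o} factorization of $k$ genuinely requires only $\tau(\log|k|)>-\infty$. If one wishes to invoke the \emph{statement} of Lemma \ref{decomposition} rather than re-examine its proof, the factorization can be produced by truncation: with $e_{n}=\chi_{[1/n,\infty)}(h)$ and $h_{n}=he_{n}+\tfrac1n(1-e_{n})\ge\tfrac1n$, the element $k_{n}=vh_{n}$ satisfies $k_{n},k_{n}^{-1}\in\mathcal M\subseteq L^{2}(\mathcal M,\tau)$ and $\|k_{n}-k\|\to0$, while $\alpha(k_{n}^{-1}-k^{-1})\to0$ — here the continuity of $\alpha$ enters decisively, via $n\,\alpha(1-e_{n})\le\alpha\big(h^{-1}(1-e_{n})\big)\to0$; then Lemma \ref{decomposition} applies to each $k_{n}$ and one passes to a limit. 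In that approach the delicate point is controlling the (non-canonical) unitary factors $w_{1}^{(n)}$, which requires normalizing the outer parts (for instance so that $\Phi(a_{1}^{(n)})\ge0$, legitimate since $\Phi$ is multiplicative on $H^{\infty}$ so $\Phi(a_{1}^{(n)})$ is invertible in $\mathcal D$), and exploiting the $\sigma$-weak compactness of the unit ball of $\mathcal M$, the weak*-lower semicontinuity of $\overline{\alpha}$ (Corollary \ref{alpha=beta}), and the uniqueness of the outer factor up to a unitary in $\mathcal D$. Once the factorization is in hand, everything else is soft.
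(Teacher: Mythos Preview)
Your approach is plausible but considerably more circuitous than the paper's, and you have missed a much cleaner reduction. The paper does not re-examine Saito's argument, does not invoke a general Szeg\H{o}/outer factorization, and does not pass through $H^{1/2}$ or any limiting scheme; instead it reduces directly to the \emph{statement} of Lemma~\ref{decomposition} via a square-root trick. From $k^{-1}\in L^{\alpha}\subseteq L^{1}$ and the polar decomposition $k=vh$ one gets $h^{-1}\in L^{1}$, hence $h^{-1/2}\in L^{2}$. Now apply Lemma~\ref{decomposition} to $h^{1/2}\in\mathcal M$ to obtain $h^{1/2}=u_{1}h_{1}$ with $u_{1}$ unitary, $h_{1}\in H^{\infty}$, $h_{1}^{-1}\in H^{2}$. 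Then $h_{1}u_{1}\in\mathcal M$ has $(h_{1}u_{1})^{-1}=u_{1}^{*}h_{1}^{-1}\in L^{2}$, so Lemma~\ref{decomposition} applies a second time: $h_{1}u_{1}=u_{2}h_{2}$ with $h_{2}\in H^{\infty}$ and $h_{2}^{-1}\in H^{2}$. Assembling,
\[
k=vh=v\,u_{1}(h_{1}u_{1})h_{1}=(vu_{1}u_{2})(h_{2}h_{1})=w_{1}a_{1},
\]
with $a_{1}=h_{2}h_{1}\in H^{\infty}$ and $a_{1}^{-1}=h_{1}^{-1}h_{2}^{-1}\in H^{2}\cdot H^{2}\subseteq H^{1}$. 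Since $a_{1}^{-1}=k^{-1}w_{1}\in L^{\alpha}$, Theorem~\ref{characterization of H^alpha} gives $a_{1}^{-1}\in H^{1}\cap L^{\alpha}=H^{\alpha}$ immediately.

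So the paper lands in $H^{1}$ for free (via $H^{2}\cdot H^{2}\subseteq H^{1}$) and then only has to intersect with $L^{\alpha}$. Your route needs, in addition, a general outer factorization from log-integrability and the identity $H^{1/2}\cap L^{1}=H^{1}$; both are true in the literature, but they are heavier external inputs than anything the paper uses, and your assertion that the factorization step in Saito's proof ``precedes the use of $k^{-1}\in L^{2}$'' would require checking that proof rather than citing its statement. Your alternative truncation-and-limit scheme is unnecessary here and, as you note yourself, the control of the non-canonical unitary factors is genuinely delicate; the square-root trick sidesteps all of that.
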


\begin{proof}
Suppose $k\in \mathcal{M} $  with $k^{-1}\in
L^{\alpha}(\mathcal{M},\tau).$ Assume that $k=vh$ is the polar
decomposition of $k$ in   $\mathcal M$, where $v$ is a unitary
operator in $\mathcal M$ and $h$ in $ \mathcal{M}$ is positive. Then
from the assumption that $k^{-1}=h^{-1}v^{*}\in
L^{\alpha}(\mathcal{M},\tau),$ we see $h^{-1}\in
L^{\alpha}(\mathcal{M},\tau)\subseteq L^{1}(\mathcal{M},\tau).$
Since $h$ in $\mathcal M$ is positive,  we can conclude that
 $h^{-\frac{1}{2}}\in L^{2}(\mathcal{M},\tau).$ Note that
$h^{\frac{1}{2}}\in \mathcal{M}.$ It follows from Lemma
\ref{decomposition} that there exist a unitary operator $u_{1}\in
\mathcal{M}$ and $h_{1}\in H^{\infty}$ such that
$h^{\frac{1}{2}}=u_{1}h_{1}$ and $h_{1}^{-1}\in H^{2}.$

Now $h=h^{\frac{1}{2}}\cdot h^{\frac{1}{2}}%
=u_{1}(h_{1}u_{1})h_{1}.$  Since $h_{1}u_{1}$ is in $\mathcal M$ and
$(h_{1}u_{1})^{-1}=u_{1}^{*}h_{1}^{-1}\in L^{2}(\mathcal{M},\tau),$
by Lemma \ref{decomposition}  there exist  a  unitary operator
$u_{2}\in \mathcal{M}$ and $h_{2}\in H^{\infty}$ such that
$h_{1}u_{1}=u_{2}h_{2}$  and $h_{2}^{-1}\in H^{2}.$ Thus
\[
k=vh=vu_{1}h_{1}u_{1}h_{1}=vu_{1}u_{2}h_{2}h_{1}=w_{1}a_{1},
\]
where $w_{1}=vu_{1}u_{2}$ is a  unitary operator in $\mathcal{M}$ and $a_{1}=h_{2}%
h_{1}\in H^{\infty}$ with $$a_{1}^{-1}=(h_{2}h_{1})^{-1}=h_{1}^{-1}h_{2}%
^{-1}\in H^{2}\cdot H^{2}\subseteq H^{1}.$$ Since $k^{-1}=(w_{1}a_{1})^{-1}%
=a_{1}^{-1}w_{1}^{*}\in L^{\alpha}(\mathcal{M},\tau),$ we obtain
that  $a_{1}^{-1}\in L^{\alpha }(\mathcal{M},\tau).$ Then by Theorem
\ref{characterization of H^alpha}, we have
$$a_{1}^{-1}\in H^{1}\cap L^{\alpha}(\mathcal{M})=H^{\alpha}.$$
Hence $w_1$ is a unitary in $\mathcal M$ and $a_1$ is in $H^\infty$ such that $k=w_1a_1$ and $a_1^{-1}\in H^\alpha$.

Similarly, there exist a unitary operator  $w_{2}\in \mathcal{M}$
and $a_{2}\in H^{\infty}$ such that  $k=a_{2}w_{2} $   and
$a_{2}^{-1}\in H^{\alpha}.$
\end{proof}

\subsection{Dense subspaces}

The following theorem plays an important role in the proof of our main result of the paper.

\begin{theorem}\label{invariant on L^alpha} Let $\mathcal M$ be a finite von Neumann algebra with a faithful
normal tracial state $\tau$, and $H^\infty$  be a finite, maximal
subdiagonal subalgebra
  of $\mathcal M$. Let $\alpha$ be a normalized, unitarily invariant, $\|\cdot \|_1$-dominating, continuous norm on $\mathcal M$. 

If $\mathcal W$ is a closed subspace of $L^\alpha(\mathcal M,\tau) $
and $\mathcal N$ is a weak* closed linear subspace of $\mathcal M$
such that $\mathcal  W H^\infty\subseteq \mathcal W$ and $  \mathcal
N H^\infty\subseteq \mathcal N,$  then

\begin{enumerate}
\item $\mathcal N=[\mathcal N]_{\alpha}\cap \mathcal M;$

\item $\mathcal W\cap\mathcal M $ is weak* closed in $\mathcal M;$

\item $\mathcal W=[  \mathcal W\cap\mathcal M]_{\alpha};$

\item if $\mathcal S$ is a subspace of $\mathcal M$ such that $\mathcal S H^\infty\subseteq \mathcal S$, then $$[\mathcal
S]_\alpha= [\overline {\mathcal S}^{w*}]_\alpha,$$ where $\overline
{\mathcal S}^{w*}$ is the weak*-closure of $\mathcal S$ in $\mathcal
M$.
\end{enumerate}\end{theorem}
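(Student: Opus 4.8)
Throughout I treat $L^\alpha(\mathcal M,\tau)$ as sitting inside $L^1(\mathcal M,\tau)$ (hence inside $\widetilde{\mathcal M}$) via the contractive inclusion coming from $\alpha\ge\|\cdot\|_1$, and I use freely that $\mathcal M$ acts on $L^\alpha(\mathcal M,\tau)$ by two-sided multiplication with $\alpha(x_1yx_2)\le\|x_1\|\,\|x_2\|\,\alpha(y)$ (Lemma \ref{lemma2.3}). I prove the four statements in the order (3), (2), (4), (1); only (1) requires real work.

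For (3): the inclusion $[\mathcal W\cap\mathcal M]_\alpha\subseteq\mathcal W$ is clear since $\mathcal W$ is $\alpha$-closed. Conversely, fix $w\in\mathcal W$ and let $w=v|w|$ be the polar decomposition in $\widetilde{\mathcal M}$; extending $v$ to a unitary $u\in\mathcal M$ gives $|w|=u^{*}w\in L^\alpha(\mathcal M,\tau)$, and $|w|\ge0$, so $k:=(I+|w|)^{-1}\in\mathcal M$ while $k^{-1}=I+|w|\in L^\alpha(\mathcal M,\tau)$. By Proposition \ref{decomposition in L^alpha} there are a unitary $w_2\in\mathcal M$ and $a_2\in H^\infty$ with $k=a_2w_2$ and $a_2^{-1}\in H^\alpha$. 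Then $|w|a_2=|w|(I+|w|)^{-1}w_2^{*}$ is a bounded Borel function of $|w|$ times a unitary, hence lies in $\mathcal M$, so $wa_2=v\,|w|a_2\in\mathcal M$; also $wa_2\in\mathcal WH^\infty\subseteq\mathcal W$, whence $wa_2\in\mathcal W\cap\mathcal M$. Choosing $c_n\in H^\infty$ with $\alpha(c_n-a_2^{-1})\to0$ (possible since $H^\alpha=[H^\infty]_\alpha$), we get $(wa_2)c_n\in\mathcal W\cap\mathcal M$ and, by Lemma \ref{lemma2.3},
\[
\alpha\bigl(w-(wa_2)c_n\bigr)=\alpha\bigl((wa_2)(a_2^{-1}-c_n)\bigr)\le\|wa_2\|\,\alpha(a_2^{-1}-c_n)\longrightarrow0,
\]
using $(wa_2)a_2^{-1}=w$. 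Thus $w\in[\mathcal W\cap\mathcal M]_\alpha$.

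For (2): by the Krein--\v{S}mulian theorem it suffices to prove that $(\mathcal W\cap\mathcal M)\cap B_r$ is weak*-closed for every $r>0$, where $B_r$ is the $r$-ball of $\mathcal M$; as $B_r$ is weak*-compact, this reduces to showing that the weak*-limit $x$ of any net $(x_i)$ in $(\mathcal W\cap\mathcal M)\cap B_r$ lies in $\mathcal W\cap\mathcal M$. Certainly $x\in\mathcal M$, so it remains to see $x\in\mathcal W$. By Theorem \ref{dualSpace} the dual of $L^\alpha(\mathcal M,\tau)$ is $L_{\overline{\alpha'}}(\mathcal M,\tau)$, acting through $\xi\mapsto(y\mapsto\tau(y\xi))$, and $L_{\overline{\alpha'}}(\mathcal M,\tau)\subseteq L^1(\mathcal M,\tau)$. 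Hence weak*-convergence $x_i\to x$ in $\mathcal M$ (testing against $L^1$) in particular forces $\tau(x_i\xi)\to\tau(x\xi)$ for every $\xi\in L_{\overline{\alpha'}}(\mathcal M,\tau)$, i.e. $x_i\to x$ weakly in $L^\alpha(\mathcal M,\tau)$. Since $\mathcal W$ is a norm-closed convex subset of $L^\alpha(\mathcal M,\tau)$ containing every $x_i$, Mazur's theorem yields $x\in\mathcal W$. For (4): only $[\overline{\mathcal S}^{w*}]_\alpha\subseteq[\mathcal S]_\alpha$ needs proof. Put $\mathcal W:=[\mathcal S]_\alpha$, a closed subspace of $L^\alpha(\mathcal M,\tau)$ with $\mathcal WH^\infty\subseteq\mathcal W$ (since for fixed $h\in H^\infty$ the map $y\mapsto yh$ is $\alpha$-continuous and $\mathcal SH^\infty\subseteq\mathcal S$). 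By (2), $\mathcal W\cap\mathcal M$ is weak*-closed and contains $\mathcal S$, so $\overline{\mathcal S}^{w*}\subseteq\mathcal W\cap\mathcal M\subseteq\mathcal W=[\mathcal S]_\alpha$, and taking $\alpha$-closures gives the claim.

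For (1), the main obstacle: $\mathcal N\subseteq[\mathcal N]_\alpha\cap\mathcal M$ is trivial. For the reverse, since $\mathcal N$ is weak*-closed it is the annihilator in $\mathcal M$ of its pre-annihilator $\mathcal N_\perp=\{\xi\in L^1(\mathcal M,\tau):\tau(n\xi)=0\text{ for all }n\in\mathcal N\}$, and $\mathcal NH^\infty\subseteq\mathcal N$ gives $H^\infty\mathcal N_\perp\subseteq\mathcal N_\perp$. Let $x\in[\mathcal N]_\alpha\cap\mathcal M$ and write $x=\alpha\text{-}\lim_k n_k$ with $n_k\in\mathcal N$. For any $\xi\in\mathcal N_\perp\cap L_{\overline{\alpha'}}(\mathcal M,\tau)$, the functional $y\mapsto\tau(y\xi)$ is bounded on $L^\alpha(\mathcal M,\tau)$ (Theorem \ref{dualSpace}) and vanishes on $\mathcal N$, so $\tau(x\xi)=\lim_k\tau(n_k\xi)=0$; and since $x\in\mathcal M$, $\xi\mapsto\tau(x\xi)$ is $\|\cdot\|_1$-continuous on $L^1$. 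Hence it suffices to show that $\mathcal N_\perp\cap L_{\overline{\alpha'}}(\mathcal M,\tau)$ --- a fortiori $\mathcal N_\perp\cap\mathcal M$ --- is $\|\cdot\|_1$-dense in $\mathcal N_\perp$; equivalently, because $\alpha\ge\|\cdot\|_1$ gives $[\mathcal N]_\alpha\subseteq[\mathcal N]_1$, it is enough to establish the $L^1$-statement $[\mathcal N]_1\cap\mathcal M=\mathcal N$, i.e. that a weak*-closed right-$H^\infty$-invariant subspace of $\mathcal M$ is $\sigma(\mathcal M,\mathcal M)$-closed. For this I would apply the Blecher--Labuschagne decomposition of $\mathcal N$ in the case $p=\infty$ (Lemma \ref{invariant on L^p}), writing $\mathcal N=\mathcal Z\oplus^{col}\big(\bigoplus^{col}_{i}u_iH^\infty\big)$: the type-1 summands cause no difficulty, since $u_i(H^1\cap\mathcal M)=u_iH^\infty$ by Proposition \ref{property of Hp}(2) with $p=\infty$ (equivalently Theorem \ref{characterization of H^alpha}); the type-2 summand $\mathcal Z=\overline{\mathcal ZH_0^\infty}^{w*}$ is the delicate one, and there one exploits the left-$H^\infty$-invariance of the relevant pre-annihilator together with Saito's factorization Lemma \ref{decomposition} to push an arbitrary pre-annihilator element into $\mathcal M$ while keeping control of the $\|\cdot\|_1$-approximation. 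This controlled ``clearing of denominators'' for the type-2 part --- making Saito's factorization depend continuously enough on the data to yield an approximating net in $\mathcal N_\perp\cap\mathcal M$ --- is the step I expect to be the real difficulty.
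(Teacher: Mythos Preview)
Your proofs of (3), (2), and (4) are correct. For (3) you argue directly (rather than by contradiction as in the paper) but use exactly the same factorization idea; for (2) your Krein--\v{S}mulian/Mazur argument is a legitimate alternative to the paper's Hahn--Banach computation; (4) is the same.

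The gap is in (1). You correctly identify that $H^\infty\mathcal N_\perp\subseteq\mathcal N_\perp$ and that it suffices to show $\mathcal N_\perp\cap\mathcal M$ is $\|\cdot\|_1$-dense in $\mathcal N_\perp$, but then you detour through Blecher--Labuschagne and leave the type-2 piece unfinished. This detour is unnecessary: the very factorization trick you used in (3) on elements of $\mathcal W$ works here on elements of $\mathcal N_\perp$. Given $\xi\in\mathcal N_\perp\subseteq L^1(\mathcal M,\tau)$, write $\xi=|\xi^*|v$ and set $k=f(|\xi^*|)$ with $f(t)=\min(1,1/t)$; then $k\in\mathcal M$, $k^{-1}\in L^1$, and Saito's factorization (the $L^1$ version, Lemma \ref{decomposition}, applied twice as in Proposition \ref{decomposition in L^alpha}) gives $k=ua$ with $u$ unitary, $a\in H^\infty$, and $a^{-1}\in H^1$. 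One checks $a\xi=u^*(k|\xi^*|)v\in\mathcal M$. Now choose $a_n\in H^\infty$ with $\|a_n-a^{-1}\|_1\to0$; then $a_n a\xi\in H^\infty(a\xi)\subseteq H^\infty\mathcal N_\perp\subseteq\mathcal N_\perp$, each $a_n a\xi\in\mathcal M$, and $\|a_n a\xi-\xi\|_1=\|(a_n-a^{-1})(a\xi)\|_1\le\|a_n-a^{-1}\|_1\|a\xi\|\to0$. This proves $\mathcal N_\perp\cap\mathcal M$ is $\|\cdot\|_1$-dense in $\mathcal N_\perp$, exactly the statement you said you needed. The paper packages this slightly differently (it starts from a separating $\xi$ produced by Hahn--Banach and produces a bounded separating $z=a_Na\xi$), but the mechanism is identical: factorization applied to the \emph{functional} side, using the trace identity $\tau(a_na\xi\,y)=\tau(\xi\,ya_na)$ and right-$H^\infty$-invariance of $\mathcal N$. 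You had every ingredient in hand---including the left-invariance of $\mathcal N_\perp$---but did not apply the factorization where it belonged.
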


\begin{proof} (1). It is clear that $\mathcal N\subseteq [\mathcal N]_{\alpha}\cap \mathcal
M.$ Assume, via contradiction, that $\mathcal N\subsetneqq [\mathcal
N]_{\alpha}\cap \mathcal M $. 
Note that $\mathcal N$ is a weak* closed linear subspace of
$\mathcal M$ and $L^1(\mathcal M,\tau)$ is the predual space of
$\mathcal M$. It follows from the Hahn-Banach Theorem that there
exist  a $\xi\in L^1(\mathcal M, \tau)$   and an $x\in [\mathcal
N]_{\alpha}\cap \mathcal M $ such that \begin{enumerate}
\item [(a)]
$\tau( \xi x)\neq 0,$ but
\item [(b)] $\tau( \xi y)=0$ for all $y\in \mathcal N.$
\end{enumerate}
 We claim that there exists a  $z\in \mathcal M$ such that
\begin{enumerate}
\item [(a')] $\tau(zx )\neq 0,$ but \item [(b')] $\tau(zy )=0$ for all $y\in \mathcal N.$
\end{enumerate}
Actually assume that $\xi= |\xi^*|v$ is the polar decomposition of
$\xi$ in $L^1(\mathcal M,\tau),$ where $v$ is a unitary element in
$\mathcal M $ and $|\xi^*|$ in $L^1(\mathcal M,\tau)$ is positive.
Let $f$ be a function on $[0,\infty)$ defined by the formula
$f(t)=1 $ for $0\leq t\leq 1 $ and $f(t)=1/t$ for $t>1.$ We define
$k=f(|\xi^*|)$ by the functional calculus. Then by the construction
of $f,$ we know that $k\in \mathcal M$ and
$k^{-1}=f^{-1}(|\xi^*|)\in L^1(\mathcal M,\tau).$ It follows from
Theorem \ref{decomposition in L^alpha} that there exist a unitary
$u\in \mathcal M$ and $a\in H^\infty$ such that $k=ua$ and
$a^{-1}\in H^1.$ Therefore, we can further assume that
$\{a_n\}_{n=1}^\infty$ is a sequence of elements in $H^\infty$ such
that $\|a^{-1}-a_n\|_1\rightarrow 0$. Observe that
\begin{enumerate}
\item [(i)] since $a, a_n$ are in $H^\infty$,   for each
$y\in \mathcal N$ we have that $ya_na \in \mathcal
NH^\infty\subseteq \mathcal N$ and
$$
 \tau ( a_n a\xi y)=\tau(  \xi ya_na)=0;
$$
\item [(ii)] we have $a \xi  =  (u^*u) a   (|\xi^*|v) = u^*(k  |\xi^*|)v \in\mathcal
M$, by the definition of $k$;
\item [(iii)] from (a) and (ii), we have
$$
0\ne \tau( \xi x)=\tau( a^{-1}a \xi x   ) =\lim_{n\rightarrow
\infty } \tau( a_n a\xi x   ).
$$
\end{enumerate} Combining (i), (ii) and (iii), we are able to find an $N\in \mathbb N$ such that $z=a_Na\xi\in \mathcal M$ satisfying
\begin{enumerate}
\item [(a')] $\tau(zx )\neq 0,$ but \item [(b')] $\tau(zy )=0$ for all $y\in \mathcal N.$
\end{enumerate}

 Recall that $x\in [\mathcal N]_{\alpha}.$ Then there is a sequence
$\{x_n\}$ in $\mathcal N$ such that $\alpha(x-x_n)\rightarrow 0.$ We
have
$$|\tau(zx_n )-\tau(zx )|=|\tau(z(x-x_n) )|\leq \|x-x_n\|_1\|z\|\leq
\alpha(x-x_n)\|z\|\rightarrow 0.$$ Combining with (b') we conclude
that $\tau(zx )=\lim_{n\rightarrow\infty}\tau(zx_n )=0.$ This
contradicts with the result (a'). Therefore $\mathcal N=[\mathcal
N]_{\alpha}\cap \mathcal M.$

(2).   Let $\overline{\mathcal W\cap \mathcal{M}}^{w*}$ be the weak*-closure of $\mathcal W\cap \mathcal{M}$ in $\mathcal M$. In order to
show that $\mathcal W\cap \mathcal{M}= \overline{\mathcal W\cap \mathcal{M}}^{w*}$, it suffices to show that
 $\overline{\mathcal W\cap \mathcal{M}}^{w*}\subseteq \mathcal W$. Assume, to the contrary,
  that $\overline{\mathcal W\cap \mathcal{M}}^{w*}  \nsubseteq\mathcal W$. Thus there exists an element $x$ in $\overline{\mathcal W\cap \mathcal{M}}^{w*}\subseteq \mathcal M\subseteq L^{\alpha}(\mathcal M,\tau)$, but $x\notin \mathcal W$. Since $\mathcal W$ is a closed subspace of $L^{\alpha}(\mathcal M,\tau)$, by the Hahn-Banach Theorem and Theorem \ref{dualSpace}, there exists a $\xi\in L_{\overline {\alpha'}}(\mathcal M,\tau)\subseteq L^1(\mathcal M,\tau)$  such that $\tau(
 \xi x )\neq 0$ and $\tau( \xi y)=0$ for all $y\in  \mathcal W.$ Since $\xi\in L^1(\mathcal M,\tau)$, the linear mapping $\tau_\xi:\mathcal M\rightarrow \mathbb C$, defined by $\tau_\xi(a)=\tau(\xi a) $ for all $a\in \mathcal M$, is weak*-continuous. Note that $x\in  \overline{\mathcal W\cap \mathcal{M}}^{w*}$ and  $\tau( \xi y)=0$ for all $y\in  \mathcal W.$ We know that $\tau(\xi x)=0$, which   contradicts with the assumption that $\tau(\xi x)\ne 0$. Hence  $\overline{\mathcal W\cap \mathcal{M}}^{w*}\subseteq \mathcal W$, whence $\overline{\mathcal W\cap \mathcal{M}}^{w*}= \mathcal W\cap \mathcal M$.

(3). Since $\mathcal W$ is $\alpha$-closed, it is easy to see
$[\mathcal W\cap {\mathcal M}]_\alpha\subseteq \mathcal W.$ Now we
assume $[\mathcal W\cap {\mathcal M}]_\alpha\subsetneqq \mathcal
W\subseteq L^\alpha(\mathcal M,\tau).$ By the Hahn-Banach Theorem
and Theorem \ref{dualSpace} there exist an $x\in \mathcal W$ and
$\xi\in L_{\overline {\alpha'}}(\mathcal M,\tau)$  such that $\tau(
 \xi x )\neq 0$ and $\tau( \xi y)=0$ for all $y\in [\mathcal W\cap
{\mathcal M}]_\alpha.$ Let $x=v|x|$ be the polar decomposition of $x
$ in $L^\alpha(\mathcal M,\tau)$, where $v$ is a unitary element in
$\mathcal M. $ Let $f$ be a function on $[0,\infty)$ defined by
the formula $f(t)=1$ for $ 0\leq t\leq 1 $  and $f(t)=1/t$ for $
t>1.$ We define $k=f(|x|)$ through the functional calculus. Then we
see $k\in \mathcal M$ and $k^{-1}=f^{-1}(|x|)\in L^\alpha(\mathcal
M,\tau).$ It follows from Theorem \ref{decomposition in L^alpha}
that there exist a unitary $u\in \mathcal M$ and $a\in H^\infty$ such
that $k=au$ and $a^{-1}\in H^\alpha.$ A little computation shows
that $ |x|k\in \mathcal M,$ which implies that
$$xa= x auu^*= x ku^*=v(|x| k)u^*\in \mathcal M.$$ Since $a\in H^\infty,$  we know $  xa\in \mathcal  WH^\infty\subseteq
 \mathcal W,$ and thus $ xa\in \mathcal W\cap\mathcal M.$ Furthermore, note that $(\mathcal W\cap {\mathcal M})H^\infty\subseteq
  \mathcal W\cap \mathcal M$. Thus, if $b\in H^\infty,$ we see $xab\in \mathcal W\cap\mathcal M,$  and so $\tau(\xi xab)=0.$ Since
  $H^\infty$ is dense in $H^\alpha $ and $\xi$ is in $L_{\overline {\alpha'}}(\mathcal M,\tau)$,   it follows from Theorem \ref{thm3.11}
   that $\tau(\xi xab )=0$ for all $b\in H^\alpha.$ Since $a^{-1}\in H^\alpha,$ we see $\tau( \xi x )=\tau(\xi x a a^{-1} )=0.$
   This   contradicts with the assumption that $\tau( \xi x )\ne 0$. Therefore $\mathcal W=[\mathcal W\cap {\mathcal M}]_\alpha.$

(4) Assume that $\mathcal S$ is a subspace of $\mathcal M$ such that
$\mathcal S H^\infty\subseteq \mathcal S$ and $\overline {\mathcal
S}^{w*}$ is the weak*-closure of $\mathcal S$ in $\mathcal M$. Then
$[\mathcal S]_\alpha H^\infty\subseteq [\mathcal S]_\alpha$. Note
that $\mathcal S \subseteq [\mathcal S]_\alpha\cap \mathcal M$. From
(2), we know that $[\mathcal S]_\alpha\cap \mathcal M$ is
weak*-closed. Therefore $\overline {\mathcal S}^{w*} \subseteq
[\mathcal S]_\alpha\cap \mathcal M$. Hence $[\overline {\mathcal
S}^{w*}]_\alpha\subseteq [\mathcal S]_\alpha$, whence $[\overline
{\mathcal S}^{w*}]_\alpha= [\mathcal S]_\alpha$

\end{proof}


\subsection{Main result}
Before we state our main result in this section, we will need the
following definition from \cite{JS}.
\begin{definition}
  Let $\mathcal M$ be a finite von Neumann algebra with a
faithful, tracial, normal state $\tau$. Let $X$ be a weak* closed
subspace of $\mathcal M$. Then $X$ is called an internal column sum
of a family of weak* closed subspaces $\{X_i\}_{i\in \mathcal I}$ of $\mathcal
M$, denoted by
$$
X=\bigoplus^{col}%
_{i\in \mathcal I} X_i
$$ if
\begin{enumerate}
\item $X_j^*X_i=\{0\}$ for all distinct $i,j\in \mathcal I$; and
\item the linear span of $\{X_i\ :  \ i\in \mathcal I\}$ is
weak* dense in $X$, i.e. $X=\overline{ span  \{X_i\ :  \ i\in \mathcal I\} }^{w*}$.
\end{enumerate}
\end{definition}

Similarly, we introduce a concept of internal column sum of
subspaces in $L^\alpha(\mathcal M,\tau)$ as follows.
\begin{definition}\label{def5.5}
  Let $\mathcal M$ be a finite von Neumann algebra with a
faithful, tracial, normal state $\tau$ and $\alpha$ be a normalized,
unitarily invariant, $\|\cdot\|_1$-dominating and  continuous  norm
on $\mathcal M$. Let $X$ be a closed subspace of $L^\alpha(\mathcal
M,\tau)$. Then $X$ is called an internal column sum of a family of
closed subspaces $\{X_i\}_{i\in \mathcal I}$ of $L^\alpha(\mathcal
M,\tau)$, denoted by
$$
X=\bigoplus^{col}%
_{i\in \mathcal I} X_i
$$ if
\begin{enumerate}
\item $X_j^*X_i=\{0\}$ for all distinct $i,j\in \mathcal I$; and
\item the linear span of $\{X_i\ :  \ i\in \mathcal I\}$ is
 dense  in $X$, i.e. $X=[span  \{X_i\ :  \ i\in \mathcal I\}]_\alpha.$
\end{enumerate}
\end{definition}

In \cite{BL2}, David P. Blecher and Louis E. Labuschagne  proved a
version of Beurling's  theorem for  $L^{p}(\mathcal{M},\tau)$  spaces
when  $1\leq p\leq \infty.$

\begin{lemma} {\em (from \cite{BL2})}
\label{invariant on L^p}  Let $\mathcal M$ be a finite von Neumann
algebra with a faithful, tracial, normal state $\tau$, and $
H^{\infty}$ be a maximal subdiagonal subalgebra of $\mathcal{M}$ with $\mathcal D=H^\infty\cap (H^\infty)^*$.
Suppose that $\mathcal K$ is a closed $H^{\infty}$-right-invariant subspace
of $L^p(\mathcal{M},\tau),$ for some $1\leq p\leq \infty.$ (For
$p=\infty$ we assume that $\mathcal K$ is weak* closed.) Then
$\mathcal K$
may be written as a column $L^{p}$-sum $\mathcal K= \mathcal Z \bigoplus^{col}(\bigoplus^{col}%
_{i}u_{i}H^{p})$, where $ \mathcal Z $ is a closed ( indeed weak*
closed if $p=\infty$)
  subspace of $L^{p}(\mathcal{M},\tau)$ such that $ \mathcal Z =[ \mathcal Z H_{0} ^{\infty}]_{p}$, and where $u_{i}$ are partial
isometries in ${\mathcal{M}}\cap \mathcal K$ with $u_{j}^{*}u_{i}=0$
if $i\neq j,$ and
with $u_{i}^{*}u_{i}\in{\mathcal{D}}.$ Moreover, for each $i,$ $u_{i}%
^{*} \mathcal Z =\{0\},$ left multiplication by the $u_{i}u_{i}^{*}$
are contractive projections from $\mathcal K$ onto the summands
$u_{i} H^{p} ,$ and left multiplication by
$I-\sum_{i}u_{i}u_{i}^{*}$ is a contractive projection from
$\mathcal K$ onto $ \mathcal Z .$
\end{lemma}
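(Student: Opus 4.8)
The statement is quoted from \cite{BL2}, so strictly speaking its ``proof'' is a reference; here is the line of argument I would follow, which is essentially that of Blecher and Labuschagne. The plan is a non-commutative Wold-type decomposition, carried out first in the Hilbert space $L^{2}(\mathcal M,\tau)$ and then transferred to the other exponents. Fix a closed right $H^{\infty}$-invariant subspace $\mathcal K\subseteq L^{2}(\mathcal M,\tau)$ and set $\mathcal K_{0}=[\mathcal K H_{0}^{\infty}]_{2}$; since $H_{0}^{\infty}H^{\infty}\subseteq H_{0}^{\infty}$, the space $\mathcal K_{0}$ is again right $H^{\infty}$-invariant. Put $\mathcal W=\mathcal K\ominus\mathcal K_{0}$ (orthogonal complement inside $\mathcal K$) and $\mathcal Z=\bigcap_{n\ge 0}[\mathcal K (H_{0}^{\infty})^{n}]_{2}$. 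Because $\mathcal D=H^{\infty}\cap(H^{\infty})^{*}$ is $*$-closed and $\mathcal K_{0}\mathcal D\subseteq\mathcal K_{0}$, a short trace computation gives $\mathcal W\mathcal D\subseteq\mathcal W$, so $\mathcal W$ is a closed right $\mathcal D$-submodule of $L^{2}$; then I would check the Wold-type identities $\mathcal Z=[\mathcal Z H_{0}^{\infty}]_{2}$ and $\mathcal K=\mathcal Z\bigoplus^{col}[\mathcal W H^{\infty}]_{2}$, using the $\mathcal D$-module property of $\mathcal W$ and $\mathcal Z$ and iterating right multiplication by $H_{0}^{\infty}$.

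The real content is the structure of the wandering part $\mathcal W$: I would prove that any such closed right $\mathcal D$-submodule is an internal column sum $\bigoplus^{col}_{i\in\mathcal I}u_{i}\mathcal D$ of cyclic pieces, where the $u_{i}\in\mathcal M\cap\mathcal K$ are partial isometries with $u_{i}^{*}u_{i}\in\mathcal D$ a projection and $u_{j}^{*}u_{i}=0$ for $i\neq j$. The mechanism is a polar-decomposition/rescaling trick: given a nonzero $\xi\in\mathcal W$, using that the $\mathcal D$-valued pairings $\Phi(\eta^{*}\xi)$ lie in $L^{1}(\mathcal D,\tau)$ one extracts from $\xi$ a nonzero bounded partial isometry $u\in\mathcal W\cap\mathcal M$ with $u^{*}u\in\mathcal D$; a Zorn's-lemma exhaustion along a maximal orthogonal family of such $u_{i}$, together with completeness, shows the $u_{i}\mathcal D$ span $\mathcal W$, hence the $u_{i}H^{2}$ span $[\mathcal W H^{\infty}]_{2}$. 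Combining this with the Wold decomposition yields $\mathcal K=\mathcal Z\bigoplus^{col}\bigl(\bigoplus^{col}_{i}u_{i}H^{2}\bigr)$; the relations $u_{i}^{*}\mathcal Z=\{0\}$ and the contractivity of left multiplication by $u_{i}u_{i}^{*}$ and by $I-\sum_{i}u_{i}u_{i}^{*}$ then follow from $\mathcal W\perp\mathcal K_{0}\supseteq\mathcal Z$, the $\mathcal D$-module property of $\mathcal Z$, and the partial-isometry identities $u_{i}u_{i}^{*}u_{i}=u_{i}$.

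To reach $p=\infty$, take $\mathcal K\subseteq\mathcal M$ weak* closed and right $H^{\infty}$-invariant, apply the $L^{2}$-case to $[\mathcal K]_{2}$, observe that the partial isometries produced are already bounded, so that by the Saito-type identity $\mathcal K=[\mathcal K]_{2}\cap\mathcal M$ they lie in $\mathcal M\cap\mathcal K$, and descend the decomposition, reading $[\,\cdot\,]_{\infty}$ as weak*-closure. For general $1\le p<\infty$ and a closed right $H^{\infty}$-invariant $\mathcal K\subseteq L^{p}(\mathcal M,\tau)$, I would first show that $\mathcal K\cap\mathcal M$ is weak* closed and $\mathcal K=[\mathcal K\cap\mathcal M]_{p}$ by a Hahn--Banach-plus-factorization argument of exactly the type used for Theorem \ref{invariant on L^alpha}(2)--(3), apply the $p=\infty$ case to $\mathcal K\cap\mathcal M$, and take $\|\cdot\|_{p}$-closures, using that left multiplication by $u_{i}u_{i}^{*}$ and by $I-\sum_{i}u_{i}u_{i}^{*}$ is $\|\cdot\|_{p}$-contractive to carry the projection statements down; the passage to closures also preserves the identification of $[\mathcal W H^{\infty}]_{p}$ with the type~1 part and of $\mathcal Z=[\mathcal Z H_{0}^{\infty}]_{p}$ with the type~2 part.

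The main obstacle is the structural lemma for $\mathcal W$: manufacturing, from an arbitrary nonzero element, a partial isometry inside $\mathcal W\cap\mathcal M$ with $u^{*}u\in\mathcal D$, controlling the $\mathcal D$-valued inner product so that the maximal orthogonal family genuinely exhausts $\mathcal W$, and correctly isolating the stable summand $\mathcal Z$. Everything else --- the Wold bookkeeping and the descent through the exponents --- is routine manipulation of closures and of the contractive multiplication operators.
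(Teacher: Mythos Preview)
Your proposal is correct in spirit and, as you rightly note at the outset, the paper itself gives no proof of this lemma: it is quoted verbatim from \cite{BL2} and used as a black box (specifically, only the $p=\infty$ case is invoked, in the proof of Theorem~\ref{mainthm}). Your sketch of the Blecher--Labuschagne argument --- Wold decomposition in $L^{2}$, the right-$\mathcal D$-module structure of the wandering subspace $\mathcal W$ yielding partial isometries via a maximality/Zorn argument, and the descent to other exponents through Saito-type density --- is an accurate outline of their method, so there is nothing to compare against on the paper's side.
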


Now we are ready to prove the main result of the paper, a
generalized version of the classical theorem of Beurling \cite{B} in
a non-commutative $L^{\alpha}(\mathcal{M},\tau)$ space for a
normalized, unitarily invariant, $\|\cdot \|_1$-dominating,
continuous norm $\alpha.$

\begin{theorem}\label{mainthm}
Let $\mathcal M$ be a finite von Neumann algebra with a faithful,
normal, tracial state $\tau$. Let $H^\infty$  be a finite, maximal
subdiagonal subalgebra
  of $\mathcal M$ and $\mathcal D=H^\infty\cap (H^\infty)^*.$
 Let $\alpha$ be a normalized, unitarily
invariant, $\|\cdot \|_1$-dominating, continuous norm on $\mathcal
M$.

 If $\mathcal W$ is a   closed subspace of
$L^{\alpha}(\mathcal{M},\tau),$ then $\mathcal WH^{\infty}\subseteq
\mathcal W$ if and only if $$\mathcal W= \mathcal Z
\bigoplus^{col}(\bigoplus^{col}_{i\in \mathcal I}u_{i}H^{\alpha}),$$
where $ \mathcal Z $ is a closed  subspace of
$L^{\alpha}(\mathcal{M},\tau)$ such that $ \mathcal Z =[ \mathcal Z
H_{0} ^{\infty}]_{\alpha}$, and where $u_{i}$ are partial
isometries in $\mathcal W  \cap  {\mathcal{M}}$ with
$u_{j}^{*}u_{i}=0$
if $i\neq j,$ and with $u_{i}^{*}u_{i}\in{\mathcal{D}}.$ Moreover, for each $i,$ $u_{i}%
^{*} \mathcal Z =\{0\},$ left multiplication by the $u_{i}u_{i}^{*}$
are contractive projections from $\mathcal W$ onto the summands
$u_{i} H^{\alpha},$ and left multiplication by
$I-\sum_{i}u_{i}u_{i}^{*}$ is a contractive projection from
$\mathcal W$ onto $ \mathcal Z .$
\end{theorem}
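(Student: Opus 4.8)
The plan is to reduce the statement to the Blecher--Labuschagne theorem for $p=\infty$ (Lemma \ref{invariant on L^p}), applied to the weak*-closed subspace $\mathcal W\cap\mathcal M$ of $\mathcal M$, and then to push the resulting decomposition up to $L^\alpha(\mathcal M,\tau)$ using the density results of Theorem \ref{invariant on L^alpha} and the description of $H^\alpha$ in Theorem \ref{characterization of H^alpha}. The ``if'' direction is the routine one: suppose $\mathcal W=\mathcal Z\bigoplus^{col}(\bigoplus^{col}_{i}u_iH^\alpha)$ as described. Since right multiplication by an element of $\mathcal M$ is $\alpha$-bounded (Lemma \ref{lemma2.3}) and $H_0^\infty H^\infty\subseteq H_0^\infty$, the identity $\mathcal Z=[\mathcal Z H_0^\infty]_\alpha$ forces $\mathcal Z H^\infty\subseteq\mathcal Z$; similarly $H^\alpha H^\infty\subseteq H^\alpha$, so $u_iH^\alpha H^\infty\subseteq u_iH^\alpha$. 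Hence the linear span of $\mathcal Z$ and the $u_iH^\alpha$ is right $H^\infty$-invariant, and taking $\alpha$-closures gives $\mathcal W H^\infty\subseteq\mathcal W$.

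For the ``only if'' direction, assume $\mathcal W$ is $\alpha$-closed with $\mathcal W H^\infty\subseteq\mathcal W$, and put $\mathcal N=\mathcal W\cap\mathcal M$. Then $\mathcal N H^\infty\subseteq\mathcal N$, and by parts (2) and (3) of Theorem \ref{invariant on L^alpha}, $\mathcal N$ is weak*-closed in $\mathcal M$ and $\mathcal W=[\mathcal N]_\alpha$. Applying Lemma \ref{invariant on L^p} with $p=\infty$ to $\mathcal N$, we obtain $\mathcal N=\mathcal Z_0\bigoplus^{col}(\bigoplus^{col}_{i}u_iH^\infty)$ with $\mathcal Z_0$ weak*-closed, $\mathcal Z_0=[\mathcal Z_0 H_0^\infty]^{w*}$, partial isometries $u_i\in\mathcal N\cap\mathcal M\subseteq\mathcal W\cap\mathcal M$ satisfying $u_j^*u_i=0$ for $i\neq j$ and $u_i^*u_i\in\mathcal D$, together with $u_i^*\mathcal Z_0=\{0\}$ and the projection statements valid inside $\mathcal M$. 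We then set $\mathcal Z=[\mathcal Z_0]_\alpha$ and claim that $\mathcal W=\mathcal Z\bigoplus^{col}(\bigoplus^{col}_{i}u_iH^\alpha)$ with all the asserted properties.

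The remaining work is bookkeeping, and this is where the main effort lies. First, $[u_iH^\infty]_\alpha=u_iH^\alpha$: the inclusion $\supseteq$ is clear from $\alpha$-boundedness of left multiplication by $u_i$, while for $\subseteq$ one uses $u_iu_i^*u_i=u_i$ and $u_i^*u_i\in\mathcal D\subseteq H^\infty$ to write $u_ih=u_i(u_i^*u_ih)$ with $u_i^*u_ih\in H^\infty$, so any $\alpha$-limit $\xi$ of such elements satisfies $\xi=u_i(u_i^*\xi)$ with $u_i^*\xi\in H^\alpha$. Next, since $\mathcal N$ is the weak*-closure of the linear span $\mathcal S$ of $\mathcal Z_0$ and the $u_iH^\infty$, and $\mathcal S H^\infty\subseteq\mathcal S$, Theorem \ref{invariant on L^alpha}(4) gives $\mathcal W=[\mathcal N]_\alpha=[\mathcal S]_\alpha$, which is the $\alpha$-closed linear span of $\mathcal Z$ and the $u_iH^\alpha$. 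The relation $\mathcal Z=[\mathcal Z H_0^\infty]_\alpha$ follows from $\mathcal Z_0=[\mathcal Z_0H_0^\infty]^{w*}$ by applying Theorem \ref{invariant on L^alpha}(4) to $\mathcal Z_0H_0^\infty$, together with the argument of the first paragraph showing $\mathcal Z H_0^\infty\subseteq\mathcal Z$. The orthogonality conditions $u_j^*u_i=0$ and $u_i^*u_i\in\mathcal D$ are inherited verbatim; $u_i^*\mathcal Z=\{0\}$, $(u_jH^\alpha)^*(u_iH^\alpha)=\{0\}$ for $i\neq j$, and the statements that left multiplication by $u_iu_i^*$ and by $I-\sum_iu_iu_i^*$ are contractive projections with the stated ranges all follow by taking $\alpha$-limits in the corresponding identities on $\mathcal N$, using that these projections act $\alpha$-contractively (Lemma \ref{lemma2.3}) and that the relevant products of affiliated operators are stable under such limits. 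The main obstacle is precisely this last bundle of verifications: tracking the internal column-sum structure---especially the orthogonality relations, which involve products of elements of $L^\alpha(\mathcal M,\tau)$ that a priori live only in $\widetilde{\mathcal M}$---as one passes from $\mathcal M$ to its $\alpha$-completion.
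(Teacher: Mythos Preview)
Your proposal is correct and follows essentially the same route as the paper: reduce to the Blecher--Labuschagne theorem at $p=\infty$ via the weak*-closed subspace $\mathcal W\cap\mathcal M$ (using Theorem~\ref{invariant on L^alpha}(2),(3)), then transport the decomposition to $L^\alpha$ using Theorem~\ref{invariant on L^alpha}(4) and the identification $[u_iH^\infty]_\alpha=u_iH^\alpha$. Your verification of $\mathcal Z=[\mathcal Z H_0^\infty]_\alpha$ via Theorem~\ref{invariant on L^alpha}(4) applied to $\mathcal Z_0H_0^\infty$ is a minor variant of the paper's argument (which instead combines parts (1) and (3)), but the substance is identical.
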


\begin{proof}
The only if part is obvious. Suppose $\mathcal W$ is a  closed
subspace of $L^{\alpha}(\mathcal{M},\tau)$ such that $\mathcal
WH^{\infty} \subseteq \mathcal W.$ Then it follows from part (2) of
Theorem \ref{invariant on L^alpha} that $\mathcal
W\cap{\mathcal{M}}$ is weak* closed in ${\mathcal{M}}$.  It follows
from Lemma \ref{invariant on L^p}, in the case $p=\infty$, that
$$ \mathcal W\cap{\mathcal{M}} =
 \mathcal Z _1\bigoplus^{col}(\bigoplus^{col}_{i\in \mathcal
I}u_{i}H^\infty),$$ where $ \mathcal Z _1$ is a weak* closed
subspace in $ \mathcal M  $ such that $ \mathcal Z _1=\overline{
\mathcal Z _1H_0^\infty}^{w*}$,  and $u_i$ are partial isometries in
$  \mathcal W\cap \mathcal M$ with $u_{j}^{*}u_{i}=0$ if $i\neq j,$
and with
$u_{i}^{*}u_{i}\in{\mathcal{D}}.$ Moreover, for each $i,$ $u_{i}%
^{*} \mathcal Z _1=\{0\},$ left multiplication by the
$u_{i}u_{i}^{*}$ are contractive projections from $ \mathcal W\cap
{\mathcal M} $ onto the summands $u_{i}H^{\infty},$ and left
multiplication by $I-\sum_{i}u_{i}u_{i}^{*}$ is a contractive
projection from $ \mathcal W\cap {\mathcal M} $ onto $ \mathcal Z
_1.$

Let $  \mathcal Z =[ \mathcal Z _1]_{\alpha}. $ It is not hard to verify that  for each $i,$ $u_{i}%
^{*} \mathcal Z =\{0\}$.  We also claim that
$[u_{i}H^{\infty}]_{\alpha}=
 u_{i}H^{ \alpha}.$ In fact it is obvious that
 $[u_{i}H^{\infty}]_{\alpha}\supseteq
 u_{i}H^{ \alpha}.$ We will need only to show that
 $[u_{i}H^{\infty}]_{\alpha}\subseteq
 u_{i}H^{ \alpha}.$ Let $\{a_n\}\subseteq H^\infty$ and $a\in
 [u_{i}H^{\infty}]_{\alpha}$ be such that $\alpha(u_ia_n-a)\rightarrow
 0$. By the choice of $u_i$, we know that $u_i^*u_i\in\mathcal
 D\subseteq H^\infty$, whence $u_i^*u_ia_n\in H^\infty$ for each $n\ge 1$. Combining with  the fact that $\alpha(u^*_i u_ia_n- u^*_i
 a)\le \alpha(u_ia_n-a)\rightarrow
 0$, we obtain that $u_i^*a\in H^\alpha$. Again from the choice of
 $u_i$, we know that $u_iu_i^*u_ia_n=u_ia_n$ for each $n\ge 1$. This
 implies that $a=u_i(u_i^*a)\in u_iH^\alpha$. Thus we conclude that $[u_{i}H^{\infty}]_{\alpha}\subseteq
 u_{i}H^{ \alpha},$ whence $[u_{i}H^{\infty}]_{\alpha}=
 u_{i}H^{ \alpha}.$

Now  from parts (3) and (4) of Theorem  \ref{invariant on L^alpha}
and from the definition of   internal column sum,  it follows that
\[\begin{aligned}
\mathcal W  &= [\mathcal W\cap \mathcal M]_\alpha  =\left [ \overline{span\{
\mathcal Z _1, u_iH^\infty : i\in \mathcal I \}}^{w*}\right ]_\alpha  =\left [ span\{
\mathcal Z _1, u_iH^\infty : i\in \mathcal I \}\right ]_\alpha \\&=\left
[ span\{ \mathcal Z , u_iH^\alpha : i\in \mathcal I \}\right
]_\alpha = \mathcal Z
\bigoplus^{col}(\bigoplus^{col}_{i}u_{i}H^{\alpha}). \end{aligned}\]

Next, we will verify that $  \mathcal Z =[ \mathcal Z
H_0^\infty]_{\alpha}. $ Recall that $ \mathcal Z  =[ \mathcal Z
_1]_{\alpha}$. It follows from part (1) of Theorem \ref{invariant on
L^alpha} we have that
$$
[ \mathcal Z _1H_0^\infty]_\alpha\cap \mathcal M =\overline{
\mathcal Z _1H_0^\infty}^{w*}= \mathcal Z _1.
$$Hence from part (3) of Theorem  \ref{invariant on L^alpha} we have
that
$$
 \mathcal Z \supseteq [ \mathcal Z H_0^\infty]_{\alpha}\supseteq [ \mathcal Z _1H_0^\infty]_{\alpha}=
[[ \mathcal Z _1H_0^\infty]_{\alpha} \cap \mathcal M]_\alpha =[
\mathcal Z _1]_\alpha= \mathcal Z .
$$ Thus
$
 \mathcal Z =[ \mathcal Z H_0^\infty]_{\alpha}. $

Moreover, it is not hard to verify that  for each $i,$   left
multiplication by the $u_{i}u_{i}^{*}$ are contractive projections
from $\mathcal W$ onto the summands $u_{i} H^{\alpha},$ and left
multiplication by $I-\sum_{i}u_{i}u_{i}^{*}$ is a contractive
projection from $\mathcal W$ onto $ \mathcal Z .$

Now the proof is completed.
 \end{proof}

A quick application of Theorem \ref{mainthm} is the following
corollary on doubly invariant subspaces in
$L^{\alpha}(\mathcal{M},\tau) $.
\begin{corollary} \label{Cor5.8}
Let $\mathcal M$ be a finite von Neumann algebra with a faithful,
normal, tracial state $\tau$.
 Let $\alpha$ be a normalized, unitarily
invariant, $\|\cdot \|_1$-dominating, continuous norm on $\mathcal
M$. If $\mathcal W$ is a   closed subspace of
$L^{\alpha}(\mathcal{M},\tau) $ such that $\mathcal W\mathcal
M\subseteq \mathcal W$, then there exists a projection $e$ in
$\mathcal M$ such that $\mathcal W=eL^\alpha(\mathcal M,\tau)$.
\end{corollary}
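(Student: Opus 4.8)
\emph{Plan.} The idea is to apply Theorem~\ref{mainthm} to the trivial maximal subdiagonal subalgebra $H^{\infty}=\mathcal M$. First I would note that $\mathcal M$ is itself a finite, maximal subdiagonal subalgebra of $\mathcal M$: all conditions of the definition hold with $\mathcal D=\mathcal M$ and $\Phi=\mathrm{id}_{\mathcal M}$, which is a faithful, normal conditional expectation satisfying $\tau\circ\Phi=\tau$. For this choice one has $H_{0}^{\infty}=\{x\in\mathcal M:\Phi(x)=0\}=\{0\}$ and $H^{\alpha}=[\mathcal M]_{\alpha}=L^{\alpha}(\mathcal M,\tau)$. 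Since $\mathcal W\mathcal M\subseteq\mathcal W$, in particular $\mathcal W H^{\infty}\subseteq\mathcal W$, so Theorem~\ref{mainthm} is applicable.

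Theorem~\ref{mainthm} then yields $\mathcal W=\mathcal Z\bigoplus^{col}(\bigoplus^{col}_{i\in\mathcal I}u_{i}H^{\alpha})$, where $\mathcal Z=[\mathcal Z H_{0}^{\infty}]_{\alpha}=[\mathcal Z\cdot\{0\}]_{\alpha}=\{0\}$, and where the $u_{i}$ are partial isometries in $\mathcal W\cap\mathcal M$ with $u_{j}^{*}u_{i}=0$ for $i\neq j$. Taking adjoints gives $u_{i}^{*}u_{j}=0$ for $i\neq j$, so the final projections $e_{i}:=u_{i}u_{i}^{*}$ are mutually orthogonal projections in $\mathcal M$; set $e:=\sum_{i\in\mathcal I}e_{i}$, a projection in $\mathcal M$ (the partial sums form a bounded increasing net converging in the strong operator topology). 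Because $H^{\alpha}=L^{\alpha}(\mathcal M,\tau)$, the decomposition reads $\mathcal W=[\,\mathrm{span}\{u_{i}L^{\alpha}(\mathcal M,\tau):i\in\mathcal I\}\,]_{\alpha}$.

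Next I would prove $\mathcal W=eL^{\alpha}(\mathcal M,\tau)$ by two inclusions. For ``$\subseteq$'': the ``moreover'' clause of Theorem~\ref{mainthm} says left multiplication by $I-\sum_{i}u_{i}u_{i}^{*}=I-e$ is a contractive projection of $\mathcal W$ onto $\mathcal Z=\{0\}$, so $ew=w$ for every $w\in\mathcal W$, i.e. $\mathcal W\subseteq eL^{\alpha}(\mathcal M,\tau)$. For ``$\supseteq$'': each $u_{i}\in\mathcal W\cap\mathcal M$ and $\mathcal W\mathcal M\subseteq\mathcal W$, so $u_{i}\mathcal M\subseteq\mathcal W$; since $\mathcal M$ is $\alpha$-dense in $L^{\alpha}(\mathcal M,\tau)$ and $\mathcal W$ is $\alpha$-closed, Lemma~\ref{lemma2.3} gives $u_{i}L^{\alpha}(\mathcal M,\tau)=[u_{i}\mathcal M]_{\alpha}\subseteq\mathcal W$, hence $e_{i}\xi=u_{i}(u_{i}^{*}\xi)\in\mathcal W$ for all $\xi\in L^{\alpha}(\mathcal M,\tau)$. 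Finally, for a fixed $\xi\in L^{\alpha}(\mathcal M,\tau)$ I would show $\big(\sum_{i\in F}e_{i}\big)\xi\to e\xi$ in the $\alpha$-norm along the net of finite $F\subseteq\mathcal I$: since all these left multiplications are contractive on $L^{\alpha}(\mathcal M,\tau)$ by Lemma~\ref{lemma2.3}, it suffices to verify this for $\xi\in\mathcal M$, where $\alpha\big((e-\sum_{i\in F}e_{i})\xi\big)\le\|\xi\|\,\alpha\big(e-\sum_{i\in F}e_{i}\big)$, and the projections $q_{F}:=e-\sum_{i\in F}e_{i}$ decrease to $0$ with $\tau(q_{F})\to0$, so the right-hand side tends to $0$ by the continuity of $\alpha$ (Definition~\ref{def2.2}). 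Thus $e\xi\in\mathcal W$ for every $\xi$, giving $eL^{\alpha}(\mathcal M,\tau)\subseteq\mathcal W$, and therefore $\mathcal W=eL^{\alpha}(\mathcal M,\tau)$.

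\emph{Main obstacle.} The one non-formal point is the $\alpha$-norm convergence of the partial sums $\sum_{i\in F}e_{i}\xi$ to $e\xi$ when $\mathcal I$ is infinite; this is precisely where the continuity hypothesis on $\alpha$ is used, via $\lim_{\tau(q)\to0}\alpha(q)=0$ applied to the decreasing family $q_{F}$. Everything else is a direct specialization of Theorem~\ref{mainthm}, the key structural input being that the choice $H^{\infty}=\mathcal M$ forces the type-$2$ summand $\mathcal Z$ to vanish.
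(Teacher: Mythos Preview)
Your proof is correct and follows essentially the same approach as the paper: apply Theorem~\ref{mainthm} with the trivial choice $H^{\infty}=\mathcal M$, which forces $H_{0}^{\infty}=\{0\}$ and hence $\mathcal Z=\{0\}$, and then identify $\mathcal W$ with $eL^{\alpha}(\mathcal M,\tau)$ for $e=\sum_{i}u_{i}u_{i}^{*}$. The paper is a bit terser, simply writing $\bigoplus^{col}_{i}u_{i}u_{i}^{*}L^{\alpha}(\mathcal M,\tau)=(\sum_{i}u_{i}u_{i}^{*})L^{\alpha}(\mathcal M,\tau)$ without spelling out the $\alpha$-convergence of the partial sums; your explicit use of the continuity hypothesis to handle that step is a welcome addition.
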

\begin{proof}
Note that $\mathcal M$ itself is a finite, maximal subdiagonal
subalgebra
  of $\mathcal M$. Let $H^\infty=\mathcal M$. Then $\mathcal
  D=\mathcal M$ and $\Phi$ is the identity map from $\mathcal M$ to
  $\mathcal M$. Hence $H_0^\infty=\{0\}$ and $H^\alpha=
  L^\alpha(\mathcal M,\tau)$.

Assume that $\mathcal W$ is a   closed subspace of
$L^{\alpha}(\mathcal{M},\tau) $ such that $\mathcal W\mathcal
M\subseteq \mathcal W$. From Theorem \ref{mainthm}, $$\mathcal W=
\mathcal Z \bigoplus^{col}(\bigoplus^{col}_{i\in \mathcal
I}u_{i}H^{\alpha}),$$ where $ \mathcal Z $ is a closed    subspace
of $L^{\alpha}(\mathcal{M},\tau)$ such that $ \mathcal Z =[ \mathcal
Z H_{0} ^{\infty}]_{\alpha}$ , and where $u_{i}$ are partial
isometries in ${\mathcal{M}}\cap \mathcal W$ with $u_{j}^{*}u_{i}=0$
if $i\neq j,$ and with $u_{i}^{*}u_{i}\in{\mathcal{D}}.$ Moreover, for each $i,$ $u_{i}%
^{*} \mathcal Z =\{0\},$ left multiplication by the $u_{i}u_{i}^{*}$
are contractive projections from $\mathcal W$ onto the summands
$u_{i} H^{\alpha},$ and left multiplication by
$I-\sum_{i}u_{i}u_{i}^{*}$ is a contractive projection from
$\mathcal W$ onto $ \mathcal Z .$

From the fact that $H_0^\infty=\{0\}$, we know that $ \mathcal Z
=\{0\}$. Since $\mathcal
  D=\mathcal M$, we know that
  $$
u_iH^\alpha= u_iL^\alpha(\mathcal M,\tau)\supseteq
u_iu^*_iL^\alpha(\mathcal M,\tau) \supseteq
u_iu^*_iu_iL^\alpha(\mathcal M,\tau)= u_iL^\alpha(\mathcal
M,\tau)=u_iH^\alpha.
  $$ So $
u_iH^\alpha=  u_iu^*_iL^\alpha(\mathcal M,\tau)$ and
$$\mathcal
W= \mathcal Z \bigoplus^{col}(\bigoplus^{col}_{i\in \mathcal
I}u_{i}H^{\alpha})= \bigoplus^{col}_{i\in \mathcal I}
u_iu^*_iL^\alpha(\mathcal M,\tau)= (\sum_i
u_iu^*_i)L^\alpha(\mathcal M,\tau)= eL^\alpha(\mathcal M,\tau),$$
where $e= \sum_i u_iu^*_i$ is a projection in $\mathcal M$.
\end{proof}

Next result is another application of Theorem \ref{mainthm} on
simply invariant subspaces in weak* Dirichlet algebras.
\begin{corollary}\label{Cor5.9}
Let $\mathcal M$ be a finite von Neumann algebra with a faithful,
normal, tracial state $\tau$. Let $H^\infty$  be a finite, maximal
subdiagonal subalgebra
  of $\mathcal M$  such that $   H^\infty\cap (H^\infty)^*=\mathbb CI.$
 Let $\alpha$ be a normalized, unitarily
invariant, $\|\cdot \|_1$-dominating, continuous norm on $\mathcal
M$.

Assume that $\mathcal W$ is a   closed subspace of
$L^{\alpha}(\mathcal{M},\tau).$  Then

\begin{enumerate}
\item if $\mathcal W$ is  simply
$H^\infty$-right invariant, i.e. $\mathcal WH^{\infty}\subsetneqq
\mathcal W$, then $\mathcal W=uH^\alpha$ for some   unitary $u\in \mathcal W\cap
\mathcal M$.
\item if $\mathcal W$ is   simply
$H^\infty$-right invariant in $H^\alpha$, i.e. $\mathcal WH^{\infty}\subsetneqq
\mathcal W$, then $\mathcal W=uH^\alpha$ with $u$ an inner element (i.e., $u$ is unitary and $u\in H^\infty$ ).\end{enumerate}
\end{corollary}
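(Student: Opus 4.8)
The plan is to read both parts off Theorem~\ref{mainthm}, the only extra inputs being the hypothesis $\mathcal D=H^\infty\cap(H^\infty)^*=\mathbb CI$, the finiteness of $\mathcal M$, and the identity $H^\alpha\cap\mathcal M=H^\infty$ furnished by Theorem~\ref{characterization of H^alpha}. First I would apply Theorem~\ref{mainthm} to the closed $H^\infty$-right invariant subspace $\mathcal W$, obtaining
\[
\mathcal W=\mathcal Z\bigoplus^{col}\Bigl(\bigoplus^{col}_{i\in\mathcal I}u_iH^\alpha\Bigr),
\]
with $\mathcal Z=[\mathcal Z H_0^\infty]_\alpha$, the $u_i\in\mathcal W\cap\mathcal M$ partial isometries, $u_j^*u_i=0$ for $i\neq j$, $u_i^*u_i\in\mathcal D$, and $u_i^*\mathcal Z=\{0\}$ for each $i$.

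Next I would collapse the column sum using $\mathcal D=\mathbb CI$. After discarding any $u_i=0$, each initial projection $u_i^*u_i$ is a nonzero projection in $\mathbb CI$, hence equals $I$, so $u_i$ is an isometry; since $\mathcal M$ is finite, the projection $u_iu_i^*$, being Murray--von Neumann equivalent to $u_i^*u_i=I$, must itself be $I$, i.e.\ each $u_i$ is a unitary in $\mathcal M$. If two distinct indices survived, then $u_i=u_j(u_j^*u_i)=0$, a contradiction, so $|\mathcal I|\le 1$. When $\mathcal I=\{i\}$, unitarity of $u_i$ together with $u_i^*\mathcal Z=\{0\}$ gives $\mathcal Z=u_iu_i^*\mathcal Z=\{0\}$, hence $\mathcal W=u_iH^\alpha$ with $u_i$ a unitary in $\mathcal W\cap\mathcal M$; when $\mathcal I=\emptyset$, $\mathcal W=\mathcal Z=[\mathcal Z H_0^\infty]_\alpha=[\mathcal W H_0^\infty]_\alpha$.

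For part~(1), the hypothesis that $\mathcal W$ is simply $H^\infty$-right invariant — which I read in the standard way as $[\mathcal W H_0^\infty]_\alpha\subsetneqq\mathcal W$ — excludes the case $\mathcal I=\emptyset$, so the discussion above already produces a unitary $u=u_i\in\mathcal W\cap\mathcal M$ with $\mathcal W=uH^\alpha$. For part~(2), $\mathcal W$ is now in addition a closed subspace of $H^\alpha$, so part~(1) gives $\mathcal W=uH^\alpha$ with $u$ unitary in $\mathcal W\cap\mathcal M$; since $I\in H^\infty\subseteq H^\alpha$ we have $u=uI\in uH^\alpha=\mathcal W\subseteq H^\alpha$, whence $u\in H^\alpha\cap\mathcal M=H^\infty$ by Theorem~\ref{characterization of H^alpha} together with Proposition~\ref{property of Hp}. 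Thus $u$ is a unitary in $H^\infty$, i.e.\ inner, and $\mathcal W=uH^\alpha$.

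Given Theorem~\ref{mainthm}, this argument is short and I do not expect a serious obstacle; the one genuinely essential point is the use of finiteness of $\mathcal M$ to upgrade the partial isometries $u_i$ to unitaries, which simultaneously forces at most one type-$1$ summand and kills the type-$2$ part $\mathcal Z$. The only other thing to be careful about is pinning down ``simply $H^\infty$-right invariant'': it is precisely the condition $[\mathcal W H_0^\infty]_\alpha\subsetneqq\mathcal W$ that rules out the doubly invariant alternative $\mathcal W=[\mathcal W H_0^\infty]_\alpha$, and Theorem~\ref{characterization of H^alpha} is what lets one see that the generator automatically lies in $H^\infty$ once $\mathcal W\subseteq H^\alpha$.
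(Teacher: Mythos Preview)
Your proof is correct and follows essentially the same route as the paper: apply Theorem~\ref{mainthm}, use $\mathcal D=\mathbb CI$ to force any nonzero $u_i$ to satisfy $u_i^*u_i=I$, and conclude $\mathcal W=u_iH^\alpha$. You are in fact more explicit than the paper on the points it leaves implicit---invoking finiteness of $\mathcal M$ to make $u_i$ unitary, arguing $|\mathcal I|\le 1$ and $\mathcal Z=\{0\}$, and for part~(2) using $H^\alpha\cap\mathcal M=H^\infty$ via Theorem~\ref{characterization of H^alpha} and Proposition~\ref{property of Hp}---and your reading of ``simply invariant'' as $[\mathcal W H_0^\infty]_\alpha\subsetneqq\mathcal W$ is the intended one.
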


\begin{proof} It is not hard to see that part (2) follows directly from
part (1). We will need only to prove (1).  From Theorem \ref{mainthm},
$$\mathcal W= \mathcal Z \bigoplus^{col}(\bigoplus^{col}_{i\in
\mathcal I}u_{i}H^{\alpha}),$$ where $ \mathcal Z $ is a closed
subspace of $L^{\alpha}(\mathcal{M},\tau)$ such that $ \mathcal Z =[
\mathcal Z H_{0} ^{\infty}]_{\alpha}$ , and where $u_{i}$ are
partial isometries in ${\mathcal{M}}\cap \mathcal W$ with
$u_{j}^{*}u_{i}=0$
if $i\neq j,$ and with $u_{i}^{*}u_{i}\in    H^\infty\cap (H^\infty)^*.$ Moreover, for each $i,$ $u_{i}%
^{*} \mathcal Z =\{0\},$ left multiplication by the $u_{i}u_{i}^{*}$
are contractive projections from $\mathcal W$ onto the summands
$u_{i} H^{\alpha},$ and left multiplication by
$I-\sum_{i}u_{i}u_{i}^{*}$ is a contractive projection from
$\mathcal W$ onto $ \mathcal Z .$

Since $\mathcal WH^{\infty}\subsetneqq \mathcal W$, $
\bigoplus^{col}_{i\in \mathcal I}u_{i}H^{\alpha} \ne \{0\}$.
Therefore there exists some $i\in \mathcal I$ such that $u_i\ne 0$.
Then $u_{i}^{*}u_{i}$ is a nonzero projection in $
H^\infty\cap (H^\infty)^*=\mathbb CI$, or $u_{i}^{*}u_{i}=I.$ This
implies that $u_i$ is a unitary element in $\mathcal W\cap \mathcal
M$. From the choice of $\{u_i\}_{i\in\mathcal I}$, we further
conclude that $\mathcal W= u_iH^\alpha$.
\end{proof}

\end{document}